\newtheorem{theorem}{Theorem}
\newtheorem{lemma}[theorem]{Lemma}
\newtheorem{cor}[theorem]{Corollary}
\newtheorem{proposition}[theorem]{Proposition}
\newtheorem{mydef}[theorem]{Definition}
\newtheorem{remark}[theorem]{Remark}
\def\E{\mathbb{E}}
\def\R{\mathbb{R}}
\def\P{\mathbb{P}}
\def\Q{\mathbb{Q}}
\def\1{\mathbbm{1}}
\def\D1{\frac{\partial}{\partial x}}
\newcommand{\tX}{\tilde X}
\newcommand{\tW}{\tilde W}
\newcommand{\dd}{{\rm d}}
\newcommand{\Ff}{\mathcal F}
\newcommand{\Gg}{\mathcal G}
\newcommand{\Kk}{{\mathcal K}}
\newcommand{\Pp}{\mathcal P}
\newcommand{\e}{\varepsilon}
\title{Particle approximation of the doubly parabolic Keller-Segel equation in the plane}
\author{Nicolas Fournier\footnote{Sorbonne Université and Université Paris Cité, CNRS, Laboratoire de Probabilités, Statistique et Modélisation, F-75005 Paris, France. nicolas.fournier@sorbonne-universite.fr} \hskip2mm  and Milica Toma\v sevi\'c\footnote{CMAP, CNRS, École polytechnique, Institut Polytechnique de
Paris, 91120 Palaiseau, France. milica.tomasevic@polytechnique.edu (corresponding author)} }
\date{}
\begin{document}

\maketitle

\begin{abstract}
In this work, we study a stochastic system of $N$ particles  associated with the parabolic-parabolic Keller-Segel system. This particle system is singular and non Markovian in that its drift term depends on the past of the particles. When the sensitivity parameter is sufficiently small, we show that this particle system indeed exists for any $N\geq 2$, we show  tightness in $N$ of its empirical measure, and that any weak limit point of this empirical measure,
as $N\to\infty$, solves some nonlinear martingale problem, which in particular implies that its family of time-marginals solves the parabolic-parabolic Keller-Segel system in some weak sense.
The main argument of the proof consists of a \textit{Markovianization} of the interaction kernel: We show that, in some loose sense, the two-by-two path-dependant interaction can be controlled by a two-by-two Coulomb interaction, as in the parabolic-elliptic case.

\medskip
\noindent\textit{Keywords and phrases:} Stochastic particle systems; Singular interaction; Non-Markovian processes; Mean-field limit; Keller-Segel equation.\\
\textit{MSC 2020 classification:} 60K35, 60H30, 35K57.
\end{abstract}

\section{Introduction and main results}\label{inin}

In this work, we study a stochastic particle approximation of the parabolic-parabolic Keller-Segel equation for chemotaxis in the plane. This equation, with unknown $(\rho,c)$, writes
\begin{equation}
\label{EDP_KS}
\begin{cases}
 &\partial_t \rho_t(x) = \Delta  \rho_t(x)-\chi \nabla \cdot (\rho_t(x)\nabla c_t(x)), \hskip1.5cm  t>0,\quad x\in\R^2, \\[4pt]
 &\theta \partial_t c_t(x) = \Delta c_t(x)-\lambda c_t(x)+\rho_t(x), \hskip2.15cm t>0, \quad x\in\R^2,
\end{cases}
\end{equation}
with $\rho_0$ and $c_0$ given.
Here,  $\rho_t\geq 0$ represents the distribution at time $t\geq 0$ of a cell population. These cells are attracted by a chemical substance that they emit (the so-called chemo-attractant) and whose  concentration at time $t\geq 0$ is given by $c_t\geq 0$. The parameters $\chi>0$, $\theta>0$ and $\lambda\geq 0$ respectively stand for the sensitivity of cells to the chemo-attractant,
the ratio between the diffusion time scales of bacteria and chemo-attractant, and the death rate  of the chemo-attractant.  All along this work, we will suppose the system is rescaled so that the total mass of the cell population (which is preserved in time) is equal to 1. 
We refer to the original works of Keller-Segel \cite{KELLER_SEGEL1, KELLER_SEGEL2, KELLER_SEGEL3} for the initial motivation and some  biological explanations and to the paper of Horstmann \cite{Horstmann1} for a thorough review.

\medskip

An interesting feature of the Keller-Segel system is that its solutions may blow-up in finite time, although the total mass is preserved: Some point cluster may emerge
due to the attraction between cells (through the chemo-attractant).
Namely, it is well known that
for any reasonable initial condition, the parabolic-elliptic version of the system,
which corresponds to the case $\theta=0$, explodes in finite time if $\chi>8\pi$,
while the global well-posedness holds when $\chi\leq 8\pi$. This was first rigorously established by Blanchet-Dolbeault-Perthame \cite{bdp} ($\chi<8\pi$) and Biler-Karch-Lauren\c{c}ot-Nadzieja
\cite{bkln} ($\chi\leq 8\pi$, radial case). In the parabolic-parabolic case where $\theta>0$, the  global well-posedness still holds for $\chi<8\pi$ and any reasonable condition, see Calvez-Corrias \cite{CalCor2008}. However, in the special case of $c_0= 0$ and for any $\chi>0$,
the global well-posedness holds true when $\theta>0$ is large enough, see Biler-Guerra-Karch \cite{Biler-Guerra-Karch}, and this was extend to a more general class of initial concentrations $c_0$ (with a smallness condition depending on $\theta$) by  Corrias-Escobedo-Matos \cite{Corrias2014}. 
Concerning explosion, the situation is still largely open, let us mention that radial solutions on a disk in $\R^2$ blowing-up for $\chi>8\pi$  have been  exhibited by Herrero-Velasquez \cite{HerreroandVelazquez}.  In addition, a criterion for explosion  of radial solutions has been obtained by Mizoguchi \cite{mizoguchiBlowUp}: The conditions
are that $\chi>8\pi$ and that some energy of the initial condition $(\rho_0,c_0)$
is large enough.

\medskip

Our goal is to derive the system \eqref{EDP_KS} as a mean-field limit of an interacting particle system. To this end, we adopt the decoupling strategy proposed by Talay-Toma\v sevi\'c \cite{Mi-De} in order to obtain a stochastic particle description of the system. Namely, observe that the concentration of chemo-attractant can be made explicit in terms of $c_0$ and of the density of bacteria: Using the Duhamel formula, we have
$$
c_t(x)=b_t^{c_0,\theta,\lambda}(x)+ \int_0^t  (K^{\theta,\lambda}_{t-s} \ast \rho_s)(x) \dd s,
$$ 
where we denoted, for $ (t,x) \in (0,\infty)\times \R^2$,
\begin{equation}
g^\theta_t(x):=\frac{\theta}{4\pi t} e^{-\frac{\theta}{4t}|x|^2}, \quad K_t^{\theta,\lambda}(x):= \frac{1}{\theta} e^{-\frac{\lambda}{\theta}t}g^\theta_t(x),  \quad b_t^{c_0,\theta,\lambda} (x):= e^{-\frac{\lambda}{\theta}t}(g^\theta_t\ast  c_0)(x).
\end{equation}
Then, the cell density in \eqref{EDP_KS} is interpreted as a Fokker-Planck equation of a non-linear  stochastic process where in the place of $\nabla c$, the gradient of the above formulation is plugged in. 
Hence, the corresponding non-linear S.D.E. reads
\begin{equation}\label{eq:NLSDE}
    X_t= X_0 + \sqrt{2} W_t + \chi \int_0^t \nabla b_s^{c_0,\theta,\lambda}(X_s)\dd s + \chi \int_0^t\int_0^s (\nabla K^{\theta,\lambda}_{s-u}\ast \rho_u)(X_s) \dd u\dd s,
    \quad \rho_s={\rm Law}(X_s),
\end{equation}
where the $\rho_0$-distributed random variable $X_0$ and the $2D$-Brownian motion $(W_t)_{t\geq 0}$ are independent.
Intuitively, it represents the motion of a typical cell in an infinite cloud of cells undergoing the dynamics of \eqref{EDP_KS}. Then, for $N\geq 2$, the following particle system is its microscopic counterpart:
$$  X_t^{i,N}= X_0^{i,N} + \sqrt{2} W_t^i + \chi \int_0^t \nabla b_s^{c_0,\theta,\lambda}(s,X_s^{i,N})\dd s 
+ \frac{\chi}{N-1} \sum_{j\neq i}\int_0^t \int_0^s \nabla K^{\theta,\lambda}_{s-u} ( X^{i,N}_s-X^{j,N}_u)  \dd u \dd s,
$$
where the initial condition $(X_0^{i,N})_{i=1,\dots,N}$ is independent of the i.i.d. family
$((W^{i}_t)_{t\geq 0})_{i=1,\dots,N}$ of $2D$-Brownian motions.
Noting that 
$$\nabla K_t^{\theta,\lambda}(x)=-\frac{\theta}{8\pi t^2} e^{-\frac{\lambda}{\theta}t } e^{- \frac{\theta}{4t}|x|^2} x,  $$
we point out here that it is not at all clear that this system is well-defined. Indeed, each particle interacts with the other particles by means of a singular functional of their trajectories. 

\medskip

Let us explain quickly what we mean by {\it singular}: Assume that particles do encounter,
as is the case in the parabolic-elliptic case, see \cite{FournierTardy}. If $X^{1,N}_t=X^{2,N}_t$ at some time $t>0$, we may expect that $|X^{1,N}_t-X^{2,N}_s|=|X^{1,N}_t-X^{2,N}_t|
+|X^{2,N}_t-X^{2,N}_s|\simeq (t-s)^{\alpha}$, for some $\alpha \in (0,1)$. Hence
the corresponding interaction (in the drift of $X^{1,N}$) is of order,
$$
\int_{0}^t |\nabla K^{\theta,\lambda}_{t-s} (X^{1,N}_t-X^{2,N}_s)|
\dd s\simeq  \int_{0}^t \frac {(t-s)^{\alpha}}{(t-s)^2}
e^{-\frac{\theta}{4(t-s)}{(t-s)^{2\alpha}}} e^{- \frac \lambda \theta (t-s)} \dd s.
$$
This quantity diverges if and only if $\alpha\geq 1/2$. 
Since we precisely expect the paths of the 
particles to be slightly more irregular than H\"older$(\frac12)$-continuous (as the Brownian motion), it is not clear whether the drift is well-defined or not, and we are really around the critical exponent.

\medskip

Our objective is to show that such a particle system exists and to prove the convergence of its empirical measure,
as $N\to \infty$ and up to extraction of a subsequence, towards a solution to $\eqref{EDP_KS}$, under an explicit (though complicated) smallness condition on the parameter $\chi$.  To the best of our knowledge, this is the first time the doubly parabolic Keller-Segel system on the plane is derived   as a mean field limit of a non-smoothed interacting stochastic particle system. 

\medskip

Let us start with defining our notion of solution to the system \eqref{EDP_KS}. 
We endow the set $\Pp(\R^2)$ of probability measures on $\R^2$ with the weak convergence topology (i.e. with continuous and bounded test functions).
We denote by $C^k_b(\R^2)$ the set of $C^k$ functions on $\R^2$ with bounded derivatives of order $0$ to $k$.

\begin{mydef}
\label{def:soulutionPDE}
Fix $\rho_0 \in \Pp(\R^2)$ and some nonnegative $c_0\in L^p(\R^2)$
for some $p>2$.
A couple $(\rho_t,c_t)_{t\geq 0}$ is a weak solution to \eqref{EDP_KS} if $(\rho_t)_{t\geq 0} \in C([0,\infty),\Pp(\R^2))$, if
for all $t\geq 0$,
\begin{equation}
        \label{eq:ConditionDef}
        \int_0^t \int_{\R^2} \int_0^s \int_{\R^2}   (K^{\theta,\lambda}_{s-u}(x-y) + |\nabla K^{\theta,\lambda}_{s-u}(x-y)|) \rho_u(\dd y)  \dd u   \rho_s(\dd x) \dd s < \infty,
\end{equation}
if for $\rho_t(\dd x) \dd t$-almost every $(t,x)\in \R_+\times \R^2$,
\begin{equation}
        \label{eq:C}
        c_t(x)=  b_t^{c_0,\theta,\lambda}(x)+ \int_0^t  (K^{\theta,\lambda}_{t-s} \ast \rho_s)(x) \dd s,
\end{equation}
and if for all $\varphi\in C^2_b(\R^2)$ and all $t>0$,
\begin{align}
\label{eq:faible}
\int_{\R^2}\!\varphi(x) \rho_t(\dd x) = & \int_{\R^2}\!\varphi(x) \rho_0(\dd x)  + \int_0^t \! \int_{\R^2}\!\Delta \varphi(x) \rho_s(\dd x) \dd s + \chi \int_0^t \int_{\R^2} \nabla \varphi(x) \cdot \nabla c_s(x) \rho_s(\dd x) \dd s.
\end{align}
\end{mydef}

The fact that $c_0 \in L^p(\R^2)$ implies that $b^{c_0,\theta,\lambda}_t$ and $\nabla b^{c_0,\theta,\lambda}_t$ are continuous on $\R^d$ for each $t>0$ and that there is a constant $A=A(\theta,p)$ such that for all $t>0$, with $p'=\frac p{p-1}$,
\begin{equation}\label{estib}
\begin{cases}
\sup_{x \in \R^2} |b^{c_0,\theta,\lambda}_t(x)| \leq ||g^\theta_t||_{L^{p'}}||c_0||_{L^p}\leq \frac{A }{t^{\frac1p}}||c_0||_{L^p},\\[10pt] 
\sup_{x \in \R^2} |\nabla b^{c_0,\theta,\lambda}_t(x)| \leq ||\nabla g^\theta_t||_{L^{p'}}||c_0||_{L^p}\leq \frac{A }{t^{\frac12+\frac1p}}||c_0||_{L^p}.
\end{cases}
\end{equation}
Since $p>2$, \eqref{estib} and \eqref{eq:ConditionDef} imply that
$\int_0^t (c_s(x)+|\nabla c_s(x)|)\rho_s(\dd x)\dd s <\infty$ for all $t>0$
and that $\nabla c_t(x)=\nabla b_t^{c_0,\theta,\lambda}(x)+ \int_0^t  (\nabla K^{\theta,\lambda}_{t-s} \ast \rho_s)(x) \dd s$ for $\rho_t(\dd x) \dd t$-almost every $(t,x)\in \R_+\times \R^2$.
Thus everything makes sense in \eqref{eq:faible}.

\medskip

We now introduce the martingale problem characterizing the law of the nonlinear S.D.E.~\eqref{eq:NLSDE}. 

\begin{mydef}
\label{defMP}
Fix some $\rho_0 \in \Pp(\R^2)$ and some nonnegative $c_0\in L^p(\R^2)$
for some $p>2$. Consider the canonical space $C([0,\infty),\R^2)$ equipped with its
canonical process $(w_t)_{t\geq 0}$ and its canonical filtration. Let $\Q$ be a probability measure on this canonical space and denote $(\Q_t)_{t\geq 0}$ its family of one-dimensional time marginals. We say that $\Q$ solves the non-linear 
martingale problem \hyperref[defMP]{$\mathcal{(MP)}$} with initial law $\rho_0$ if $\Q_0 = \rho_0$, if 
\begin{equation}
    \label{eq:ConditionMP}
    \int_0^t \int_{\R^2} \int_0^s \int_{\R^2}   (K^{\theta,\lambda}_{s-u}(x-y) + |\nabla K^{\theta,\lambda}_{s-u}(x-y)|) \Q_u(\dd y)  \dd u   \Q_s(\dd x) \dd s < \infty,
\end{equation}
and if for any $\varphi \in C_c^2(\R^2)$, the process
\begin{equation}
\label{def_mart}
M^\varphi_t:=\varphi(w_t)-f(w_0)-\int_0^t \Big[ \Delta \varphi(w_u)+\chi \nabla \varphi(w_u)\cdot \Big(  \nabla b_s^{c_0,\theta,\lambda}(w_s) + \int_0^u (\nabla K^{\theta,\lambda}_{u-r}\ast \Q_r)(w_u) \dd r \Big) \Big]\dd u
\end{equation}
is a $\Q$-martingale.
\end{mydef}

For $\Q$ a solution to \hyperref[defMP]{$\mathcal{(MP)}$},
setting $c_t=b_t^{c_0,\theta,\lambda}+ \int_0^t (K^{\theta,\lambda}_{t-s} \ast \Q_s )\,\dd s$, it holds that 
$(\Q_t,c_t)_{t\geq 0}$ is a weak solution to \eqref{EDP_KS}. 
Finally, we consider the following notion of solution to our particle system. 

\begin{mydef}\label{def:PS} Fix $N\geq 2$ and some nonnegative
$c_0\in L^p(\R^2)$ for some $p>2$. Consider some i.i.d. family $(W_t^i)_{t\geq 0,i=1,\dots,N}$ of $2D$-Brownian motion, as well as some exchangeable family 
$(X_0^i)_{i=1,\dots,N}$ of $\R^2$-valued random variables, independent of the family of Brownian motions. A family of continuous $\R^2$-valued processes $(X^{i,N}_t)_{t\geq 0,i=1,\dots,N}$ is said to be a $N$-Keller-Segel particle system if a.s., for all $t\geq 0$, all $i\neq j$,
\begin{align}
    \label{PScond}
    \int_0^t \int_0^s |\nabla K^{\theta,\lambda}_{s-u} ( X^{i,N}_s-X^{i,N}_u)|  \dd u \dd s<\infty 
\end{align}
and if a.s., for all $t\geq 0$, all $i=1,\dots, N$,
\begin{align*}
    X_t^{i,N}=& X_0^{i,N} + \sqrt{2} W_t^i + \chi \int_0^t \nabla b_s^{c_0,\theta,\lambda}(s,X_s^{i,N})\dd s 
    + \frac{\chi}{N-1} \sum_{j\neq i}\int_0^t \int_0^s \nabla K^{\theta,\lambda}_{s-u} ( X^{i,N}_s-X^{j,N}_u)  \dd u \dd s. 
\end{align*} 
\end{mydef}
Everything makes sense in this last expression by \eqref{estib} and \eqref{PScond}.
As already mentioned, it is not at all clear that the above system has a solution and even less that it converges as $N\to \infty$, due to the singular nature of the path-dependent interaction of the particles.
We are ready to present our main result. It gathers several statements that we make throughout the paper. 
\begin{theorem}
\label{th:mainTH}
Let $\chi>0$, $\lambda\geq 0$ and $\theta>0$. Consider $\rho_0 \in \Pp(\R^2)$ and some nonnegative $c_0\in L^p(\R^2)$, for some $p>2$. Consider, for each $N\geq 2$, some exchangeable initial condition $(X^{i,N}_0)_{i=1,\dots,N}$. Suppose that $\chi < \chi_{\theta,p}^*$, where $\chi_{\theta,p}^*>0$ is defined in~\eqref{Eq:chistar}.  Then, we have the following results.
\medskip

(i)  For each $N\geq 2$, there exists an exchangeable $N$-Keller-Segel particle system.

\medskip    

(ii) We set $\mu^N:=\frac1N\sum_{i=1}^N \delta_{(X^{i,N}_t)_{t\geq 0}}$, which a.s. belongs to $\Pp(C([0,\infty),\R^2))$ and, for each $t\geq 0$, $\mu^N_t:=\frac1N\sum_{i=1}^N \delta_{X^{i,N}_t}$, which a.s. belongs to $\Pp(\R^2)$. If $\mu^N_0$ converges in probability, as $N\to \infty$, to $\rho_0$, then the family $(\mu^N)_{N\geq 2}$ is tight in $\Pp(C([0,\infty),\R^2))$ and any (possibly random) limit point 
$\mu$ of $(\mu^N)_{N\geq 2}$ a.s.  solves  \hyperref[defMP]{$\mathcal{(MP)}$} with initial law $\rho_0$.
\end{theorem}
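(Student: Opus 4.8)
The plan is to establish existence of the particle system by a compactness argument and then pass to the limit, with both steps resting on a single a priori estimate: a \emph{Markovianization} bound that controls the path-dependent interaction $\int_0^s|\nabla K^{\theta,\lambda}_{s-u}(X^{i,N}_s-X^{j,N}_u)|\dd u$ by a Coulomb-type quantity involving only the current positions, plus integrable error terms. I would first set up a regularized particle system: replace $\nabla K^{\theta,\lambda}_t(x)$ by $\nabla K^{\theta,\lambda}_{t}(x)\wedge$ (truncation, or equivalently mollify $x/|x|^2$ to $x/(|x|^2+\e)$ at scale $\e$), which is bounded and Lipschitz in $x$ and thus gives a well-defined, non-explosive strong solution $(X^{i,N,\e}_t)_i$ by standard SDE theory (the drift is a nonanticipative bounded functional of the paths). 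The core of the argument is then an entropy/energy estimate: using Itô's formula on a well-chosen Lyapunov functional — morally $\sum_{i\neq j}\log|X^{i,N,\e}_t-X^{j,N,\e}_t|$ as in the parabolic-elliptic case (cf.\ \cite{FournierTardy}), or a free-energy combining this with the $L^p$-norm of $c_0$ via $b^{c_0,\theta,\lambda}$ — one shows that $\E$ of the bad interaction terms is bounded uniformly in $\e$ and $N$, provided $\chi<\chi^*_{\theta,p}$. This is where the smallness condition enters: the logarithmic singularity of the Coulomb interaction produces, after Itô, a term $-\frac{c\chi}{N}\sum_{i\neq j}\int_0^t|X^{i}-X^{j}|^{-2}$ that must dominate the positive contributions coming from the drift, and the threshold $\chi^*_{\theta,p}$ is exactly the largest $\chi$ for which this sign works out.

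The decisive technical lemma — and the one I expect to be the main obstacle — is the inequality that lets us pass from the \emph{path-dependent} kernel to the \emph{Markovian} Coulomb kernel. Schematically, one wants: for a continuous path $(Y_u)_{u\le s}$,
\begin{equation*}
\int_0^s |\nabla K^{\theta,\lambda}_{s-u}(X_s-Y_u)|\,\dd u \;\lesssim\; \frac{\mathrm{(const)}}{|X_s - Y_s|} + \text{(terms controlled by the quadratic variation / modulus of continuity of } Y).
\end{equation*}
Since $|\nabla K^{\theta,\lambda}_{t}(z)|\simeq |z| t^{-2} e^{-\theta|z|^2/(4t)}$, splitting the time integral according to whether $|X_s-Y_u|$ is comparable to $|X_s-Y_s|$ or much larger, and bounding $|Y_u-Y_s|$ using the (stochastic) Hölder regularity of the Brownian-driven paths, one should get the current-position Coulomb term plus a remainder that is a nonnegative functional with finite expectation. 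Because we are, as the authors note, \emph{exactly at the critical Hölder exponent $1/2$}, this estimate cannot be purely pathwise: it will require an expectation (or at least a stopping-time localization plus BDG/Garsia--Rodemich--Rumsey), and getting the constant in front of the Coulomb term sharp enough to keep $\chi^*_{\theta,p}>0$ is the delicate point.

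Granting this Markovianization lemma and the resulting uniform-in-$(\e,N)$ bounds, \textbf{existence} (part (i)) follows by letting $\e\to0$: the a priori estimate shows that the regularized drift is uniformly integrable and that particles do not collide badly enough to make the interaction blow up, so the laws of $(X^{i,N,\e})_i$ are tight in $C([0,\infty),\R^2)^N$, and any limit point solves Definition~\ref{def:PS} — the condition \eqref{PScond} being inherited from the uniform bound by Fatou. Exchangeability is preserved along the approximation and the limit. For \textbf{tightness and the limiting martingale problem} (part (ii)), I would work with $\mu^N=\frac1N\sum_i\delta_{(X^{i,N}_\cdot)}$: tightness of $(\mu^N)$ in $\Pp(C([0,\infty),\R^2))$ reduces, by a standard criterion, to tightness of the law of a single tagged particle, which holds because its drift has uniformly (in $N$) integrable $L^1_{loc}$-norm by the same estimate, giving an Aldous/Kolmogorov-type modulus. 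Then for a limit point $\mu$ one checks \eqref{eq:ConditionMP} via Fatou from the uniform bound, and shows $M^\varphi_t$ is a $\mu$-martingale by writing the analogous (approximate) martingale for $\mu^N$ and passing to the limit; the only nontrivial continuity issue is the drift term $\int_0^u(\nabla K^{\theta,\lambda}_{u-r}\ast \mu_r)(w_u)\dd r$, whose convergence along $\mu^N\to\mu$ again uses the Markovianization control to dominate the singular contributions and a standard argument to handle the mild singularity of $\nabla K^{\theta,\lambda}$ near $r=u$ and near coincidence of positions, exactly as in the parabolic-elliptic treatment.
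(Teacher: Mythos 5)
Your broad outline (regularize, prove an a priori estimate uniform in $\e$ and $N$, send $\e\to 0$ for existence, then $N\to\infty$ for the martingale problem) matches the paper's, and you correctly single out the \emph{Markovianization} estimate as the decisive technical point. But the route you propose for proving it — split $\int_0^s|\nabla K^{\theta,\lambda}_{s-u}(X_s-Y_u)|\,\dd u$ according to whether $|X_s-Y_u|$ is comparable to $r:=|X_s-Y_s|$, and control the remainder via the H\"older modulus of continuity of $Y$ — has a genuine gap, and taking expectations or localizing with BDG does not repair it. Suppose $Y$ has modulus $\lesssim |s-u|^{\frac12-\delta}$ for some $\delta>0$ (Brownian paths do not admit $\delta=0$). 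Near field, $\{u:|Y_u-Y_s|\le r/2\}$, has length $\lesssim r^{\frac1{1/2-\delta}}$, and there $|X_s-Y_u|\gtrsim r$ so $|\nabla K^{\theta,\lambda}_{s-u}|\lesssim r^{-3}$; the contribution is $\lesssim r^{-1+\frac{4\delta}{1-2\delta}}$, which is fine. Far field, however, one can only use $|\nabla K^{\theta,\lambda}_{s-u}|\lesssim (s-u)^{-3/2}$, and integrating over $s-u\gtrsim r^{\frac1{1/2-\delta}}$ gives $\lesssim r^{-\frac1{1-2\delta}}$. For every admissible $\delta>0$ this is $r^{-1-\e'}$ with $\e'>0$, and the exponent $1$ is already the Coulomb-critical one, so the extra $\e'$ is fatal: $\E[\int_0^t r_s^{-\beta}\dd s]$ is only under control for $\beta<2$ and you end up needing precisely the borderline power. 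This is the obstruction the paper itself flags in its discussion of the word \emph{singular}; the pathwise/modulus-of-continuity decomposition cannot close at the critical exponent.

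The paper avoids any pathwise splitting. It first proves the pointwise bound $|\nabla K^{\theta,\lambda}_t(x)|\le C\,(t+\alpha|x|^2)^{-3/2}$ (Remark~\ref{ttt}) and then the purely deterministic functional inequality of Lemma~\ref{lemma:FI}: for \emph{any} measurable $f\geq 0$, $\int_0^t(s+f(s))^{-3/2}\dd s\le \kappa\big(\tfrac12,\gamma-1\big)\big(\int_0^t(s+f(s))^{-\gamma}\dd s\big)^{\frac1{2(\gamma-1)}}$. Applied with $f(s)=\alpha|R^{1,2}_{t,t-s}|^2$ this gives $|D^{1,2}_t|\lesssim (S^{1,2}_t)^{\frac1{2(\gamma-1)}}$ where $S^{1,2}_t=\int_0^t (t-s+\alpha|R^{1,2}_{t,s}|^2)^{-\gamma}\dd s$, with no reference to path regularity. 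The only probabilistic step is the bound on $\E[\int_0^t S^{1,2}_u\dd u]$, obtained not via H\"older continuity but by the special It\^o formula of Lemma~\ref{lemma:grosIto} applied to $F(t,x)=-(t+\alpha|x|^2)^{1-\gamma}$: since $\partial_t F+\Delta F\geq C_1(\alpha,\gamma)(t+\alpha|x|^2)^{-\gamma}$, the It\^o expansion \emph{generates} the $S$-integral with a favorable sign, and after controlling the drift terms by the same functional inequality one obtains $\E[\int_0^t S^{1,2}_u\dd u]\lesssim \E[\int_0^t |R^{1,2}_{u,u}|^{2(1-\gamma)}\dd u]+C$. Two further discrepancies: the Lyapunov function for the final step is not the logarithm but $\psi(x,y)=\phi(|x-y|^2)$ with $\phi(r)=r^{\nu/2}/(1+r^{\nu/2})$, $\nu=4-2\gamma$, tuned to the exponent $\gamma$; and the regularization is in \emph{time}, $H^{\theta,\lambda,\e}_t=\frac{t^2}{(t+\e)^2}\nabla K^{\theta,\lambda}_t$, rather than a spatial mollification of $x/|x|^2$, which is what makes the functional-inequality step uniform in $\e$.
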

The following statement is an immediate consequence of the above result.
\begin{cor}
With the assumptions and notations of Theorem~\ref{th:mainTH}, denoting by $(\mu_t)_{t\geq 0}$ a (possibly random) limit point of $((\mu^N_t)_{t\geq 0})_{N\geq 2}$ and defining $c_t=b_t^{c_0,\theta,\lambda}+ \int_0^t K^{\theta,\lambda}_{t-s} \ast \mu_s \,\dd s$, the couple $(\mu_t,c_t)_{t\geq 0}$ is a.s. a solution to \eqref{EDP_KS} with initial condition $(\rho_0,c_0)$ in the sense of Definition~\ref{def:soulutionPDE}. 
\end{cor}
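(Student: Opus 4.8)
The plan is to read the corollary straight off Theorem~\ref{th:mainTH}(ii), combined with the observation recorded just after Definition~\ref{defMP} that a solution of \hyperref[defMP]{$\mathcal{(MP)}$} induces a weak solution of \eqref{EDP_KS}. The only genuine work is to make that observation precise, and it splits into three pieces: (a) identifying the time-marginal limit point with the marginals of a path-space limit point so that Theorem~\ref{th:mainTH}(ii) applies; (b) checking that the measure $\mu$, once frozen on an almost sure event, satisfies the integrability and identification conditions of Definition~\ref{def:soulutionPDE}; and (c) turning the martingale property \eqref{def_mart} into the weak formulation \eqref{eq:faible}, which requires a Fubini argument and an extension of the class of test functions from $C^2_c(\R^2)$ to $C^2_b(\R^2)$.

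First I would fix a limit point $(\mu_t)_{t\geq 0}$ of $((\mu^N_t)_{t\geq 0})_{N\geq 2}$, obtained along some subsequence. Since $(\mu^N)_{N\geq 2}$ is tight in $\Pp(C([0,\infty),\R^2))$ by Theorem~\ref{th:mainTH}(ii), I can pass to a further subsequence along which $\mu^N$ converges (in probability) to some $\mu\in\Pp(C([0,\infty),\R^2))$; because every evaluation map $w\mapsto w_t$ is continuous on $C([0,\infty),\R^2)$, the one-dimensional time marginals of $\mu$ are exactly the $\mu_t$. Theorem~\ref{th:mainTH}(ii) provides an almost sure event on which $\mu$ solves \hyperref[defMP]{$\mathcal{(MP)}$} with initial law $\rho_0$; from now on I would work on a fixed realisation in this event, so that $\mu$ is a deterministic probability measure and I may set $\Q:=\mu$ and $\Q_t=\mu_t$.

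Next I would verify the first requirements of Definition~\ref{def:soulutionPDE} for the couple $(\mu_t,c_t)_{t\geq 0}$, with $c_t$ as in the statement. Weak continuity of $t\mapsto\mu_t$ is automatic: for $\varphi\in C_b(\R^2)$, $t\mapsto\int_{\R^2}\varphi\,\dd\mu_t=\int_{C([0,\infty),\R^2)}\varphi(w_t)\,\mu(\dd w)$ is continuous by dominated convergence, since $w\mapsto\varphi(w_t)$ is bounded and $t\mapsto\varphi(w_t)$ is continuous for each path $w$. Condition \eqref{eq:ConditionDef} is precisely \eqref{eq:ConditionMP} with $\Q_s=\mu_s$, which holds because $\mu$ solves \hyperref[defMP]{$\mathcal{(MP)}$}. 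Together with $c_0\in L^p(\R^2)$, $p>2$, and the estimates \eqref{estib}, this is exactly the situation discussed right after Definition~\ref{def:soulutionPDE}: it yields both \eqref{eq:C} and the identity $\nabla c_t(x)=\nabla b_t^{c_0,\theta,\lambda}(x)+\int_0^t(\nabla K^{\theta,\lambda}_{t-s}\ast\mu_s)(x)\,\dd s$ for $\mu_t(\dd x)\,\dd t$-a.e. $(t,x)$, as well as $\int_0^t\int_{\R^2}(c_s(x)+|\nabla c_s(x)|)\mu_s(\dd x)\,\dd s<\infty$, so that every term in \eqref{eq:faible} is meaningful.

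It remains to produce \eqref{eq:faible}. For $\varphi\in C^2_c(\R^2)$, the process $M^\varphi$ in \eqref{def_mart} is a $\Q$-martingale with $M^\varphi_0=0$, so taking $\Q$-expectations at time $t$ turns \eqref{def_mart} into
\begin{align*}
\int_{\R^2}\varphi\,\dd\mu_t=\int_{\R^2}\varphi\,\dd\rho_0+\E_\Q\Big[\int_0^t\Big(\Delta\varphi(w_u)+\chi\nabla\varphi(w_u)\cdot\Big(\nabla b_u^{c_0,\theta,\lambda}(w_u)+\int_0^u(\nabla K^{\theta,\lambda}_{u-r}\ast\Q_r)(w_u)\,\dd r\Big)\Big)\dd u\Big].
\end{align*}
By \eqref{eq:ConditionMP}, \eqref{estib} and the boundedness of $\nabla\varphi$, the integrand is jointly integrable in $(u,w)$, so Fubini's theorem lets me rewrite the expectation as $\int_0^t\int_{\R^2}\big[\Delta\varphi(x)+\chi\nabla\varphi(x)\cdot\nabla c_u(x)\big]\mu_u(\dd x)\,\dd u$, using the identification of $\nabla c_u$ from the previous paragraph; this is \eqref{eq:faible} for compactly supported $\varphi$. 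To reach all $\varphi\in C^2_b(\R^2)$, I would truncate: taking $\psi\in C^\infty_c(\R^2)$ equal to $1$ near the origin and $\varphi_n(x):=\varphi(x)\psi(x/n)\in C^2_c(\R^2)$, one has $\varphi_n\to\varphi$, $\nabla\varphi_n\to\nabla\varphi$, $\Delta\varphi_n\to\Delta\varphi$ pointwise with sup-norms bounded uniformly in $n$; applying \eqref{eq:faible} to each $\varphi_n$ and letting $n\to\infty$, dominated convergence (using that each $\mu_s$ is a probability measure and $\int_0^t\int_{\R^2}|\nabla c_s|\,\dd\mu_s\,\dd s<\infty$) gives \eqref{eq:faible} for $\varphi$. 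Hence $(\mu_t,c_t)_{t\geq 0}$ is a weak solution of \eqref{EDP_KS} in the sense of Definition~\ref{def:soulutionPDE}. I expect the only mildly delicate points to be the Fubini justification and this truncation; everything else is a direct transcription of Theorem~\ref{th:mainTH}(ii).
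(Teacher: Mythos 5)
Your proof is correct and follows the same route as the paper: the paper gives no argument beyond labeling the corollary an ``immediate consequence'' of Theorem~\ref{th:mainTH}(ii) together with the unproven observation after Definition~\ref{defMP} that any solution of \hyperref[defMP]{$\mathcal{(MP)}$} (with $c_t$ defined as in the statement) yields a weak solution of \eqref{EDP_KS}. What you do is simply unpack that observation, and you do so correctly: the identification of the time-marginal limit point with the marginals of a path-space limit point, the verification of \eqref{eq:ConditionDef} from \eqref{eq:ConditionMP}, the Fubini swap justified by \eqref{eq:ConditionMP} and \eqref{estib}, and the truncation $\varphi_n=\varphi\,\psi(\cdot/n)$ to pass from $C^2_c$ to $C^2_b$ with dominated convergence (noting, as you should explicitly, that $\|\nabla\varphi_n\|_\infty$ and $\|\Delta\varphi_n\|_\infty$ stay bounded because the derivatives of $\psi(\cdot/n)$ carry factors $1/n$ and $1/n^2$). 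No gap.
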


Let us also mention that we have some 
weak  {\it regularity} estimates.

\begin{remark}
Adopt the assumptions and notations of Theorem~\ref{th:mainTH}.
There exists $\gamma \in (\frac32,2)$, depending on $\chi$, $\theta$ and $p$,
such that, with $\beta=2(\gamma-1)\in (1,2)$,
\begin{gather*} 
\sup_{N\geq 2} \E\Big[\int_0^t \frac{\dd s}{|X^{1,N}_s-X^{2,N}_s|^{\beta}}\Big]+
\E\Big[\int_0^t \int_{\R^2} \int_{\R^2}  \frac{1}{|x-y|^{\beta}} \mu_s(\dd y)\mu_s(\dd x) \dd s \Big]<\infty.
\end{gather*}
Moreover, we also  have \eqref{eq:Prop31-2}-\eqref{eq:Prop31-3prime}-\eqref{eq:Prop31-3} and \eqref{eq:ConditionMPplus2}.
\end{remark}

\paragraph{About the threshold.} As shown in Remark \ref{vn}, it for example holds true that 
(i) $\liminf_{\theta\to 0} \chi_{\theta,p}^* \geq 3.28$
for any $p>2$, (ii) $\chi_{1,p}^*\geq 1.39$ as soon as $p>3.3$, 
and (iii) $\liminf_{\theta\to \infty} \sqrt\theta\chi_{\theta,p}^* \geq 1.65$ as soon as $p>3.5$.
Moreover, we do not obtain better values of $\chi_{\theta,p}^*$ for larger values of $p$.

\medskip

In view of the global well-posedness/explosion results concerning \eqref{EDP_KS} mentioned above, we would of course prefer to have $\chi_{\theta,p}^*=8\pi$ for any $\theta>0$, at least
for large values of $p$ (or even with stronger regularity conditions on $c_0$). 
Our threshold is smaller, and this is due to the fact that it does not seem easy to make use, at the level of particles, of the {\it macroscopic} quantities exploited in \cite{CalCor2008}. Our proof is based on a tedious {\it a priori} estimate that relies on some moment computations and a suitable functional inequality.  
\medskip

According to \cite{Biler-Guerra-Karch,Corrias2014}, we could even hope for a very large threshold $\chi_{\theta,c_0}^*$ when $\theta$ is very large and $c_0$ is very small. We do not see how to modify our argument in this direction, because our method leads to a threshold independent of $c_0$ (at least if one assumes that $p=\infty$).
Observe that for $c_0$ given and non identically null, the threshold $\chi_{\theta,c_0}^*$ in \cite{Corrias2014} also tends to $0$ as $\theta\to \infty$.

\medskip

\paragraph{References.}
Particle approximations of singular P.D.E.s has been the subject of many papers. 
The closely related $2D$ Navier-Stokes and parabolic-elliptic Keller-Segel equations can be approximated by singularly interacting particle systems (singularity is of the order $1/r$, where $r$ is the pairwise particle distance) that are Markovian (not depending on the past of the particles). As already mentioned, we believe that the parabolic-parabolic equation also leads to a spatial singularity of order $1/r$. 
Something in this sense appears in the paragraph {\it Strategy} below. Moreover, observe that,
very roughly, if $X^{1,N}_t=X^{2,N}_s+R$, for some vector $R$, during the time interval $[t-1,t]$, then the corresponding interaction (in the drift of $X^{1,N}$) looks like, e.g. when $\lambda=0$,
$$
\int_{t-1}^t \nabla K^{\theta,\lambda}_{t-s} (X^{1,N}_t-X^{2,N}_s)
\dd s=-\frac{\theta R}{8\pi} \int_{t-1}^t \frac 1{(t-s)^2}
e^{-\frac{\theta}{4(t-s)}|R|^2} \dd s
= - \frac{R}{2\pi|R|^2}e^{-\frac{\theta |R|^2}{4}}
\sim - \frac{R}{2\pi |R|^2}
$$
as $|R| \to 0$. Of course, this is a caricatured situation.

\medskip

The convergence as $N\to\infty$ of an interacting particle system to the solution of the $2D$ Navier-Stokes equation has been established by Osada \cite{Osada1,Osada85long} (convergence along a subsequence for a large enough viscosity), Fournier-Hauray-Mischler \cite{FHM} (convergence of the whole sequence for an arbitrary positive viscosity) and finally by
Jabin-Wang \cite{JW} (quantitative convergence for an arbitrary positive viscosity).
The $2D$ parabolic-elliptic Keller-Segel equation has the same order of singularity as the $2D$ Navier-Stokes equation, but the interaction is {\it attractive} instead of being {\it rotating}, and solutions
do explode when the sensitivity parameter $\chi$ is greater than $8\pi$. Hence the situation is more delicate. The convergence of the associated particle system along a subsequence has been shown by Fournier-Jourdain~\cite{fournier-jourdain} when $\chi<2\pi$, and the quantitative convergence has been established by Bresch-Jabin-Wang~\cite{BJW} when $\chi<8\pi$, and the critical case $\chi=8\pi$ is treated in the work of Tardy~\cite{T}.
Let us also mention that Cattiaux-P\'ed\`eches \cite{CP} have proved the uniqueness in law
of the particle system in the subcritical case $\chi<8\pi$, which is far from obvious, and that a detailed study of the collisions
arising near the instant of explosion has been achieved,
in the supercritical case, by Fournier-Tardy in \cite{FournierTardy}. Finally,
Olivera-Richard-Toma\v sevi\'c\ \cite{ORT} are able to prove the quantitative convergence, in the supercritical case $\chi>8\pi$, until the explosion time, of a smoothed particle system, namely when the interaction in $1/r$  is replaced by an interaction in $1/(r+\e_N)$, with $\e_N=N^{-1/2+\eta}$ for some $\eta>0$.

\medskip

Concerning the parabolic-parabolic Keller-Segel equation, it seems that there are very few results about interacting particle systems. Jabir-Talay-Toma\v sevi\'c\ \cite{JTT} have considered the same problem as ours in dimension $1$. There the well-posedness and the propagation of chaos were proved using some suitable Girsanov transforms and without any constraints on the parameters of the model (as expected). Our situation here is somewhat more singular, as (i) in the $2D$ setting, solutions to the limit P.D.E. may explode in finite time for $\chi$ large and (ii) we expect that, as in the parabolic-elliptic case, particles do collide, even in the subcritical case, see \cite{fournier-jourdain}. Hence, it is not possible here to use Girsanov transforms as we do not expect the law of our system to be absolutely continuous w.r.t. the Wiener measure.
A more computational way to see this \textit{increase} of the singularity with the dimension is to note that the kernel $\nabla K$ satisfies, in dimension $d$, for all $p\geq 1$,
$$\|\nabla K\|_{L^p(\R^d)}=\frac{C_{p,d}}{t^{\frac{d}{2}(1-\frac{1}{p})+\frac{1}{2}}}.$$
In particular, $\nabla K$ belongs to $L^1((0,T);L^2 (\R^d))$ only for $d=1$ and this was crucial in \cite{JTT} for justifying the Girsanov transform.

\medskip
In any dimension $d\geq 1$, Stevens \cite{Stevens} studies a physically more satisfying particle system, with two populations (a population of cells and a population of chemo-attractant particles) in moderate interaction. The author proves the convergence in probability of the Kantorovich-Rubinstein distance between the empirical measure  of the  particle system and a solution to a generalized parabolic-parabolic Keller-Segel equation, including the supercritical case. The convergence is shown on the time interval where the limit P.D.E. has a solution belonging to $C^{1,3}_b([0,T]\times\R^d,\R)\cap C^0([0,T], L^2(\R^d))$. The moderate cutoff procedure, that we do not make precise here, decreases polynomially to $0$ as $N\to\infty$. 

\medskip
Let us mention that 
Chen-Wang-Yang \cite{CWY} also prove the convergence of a smoothed version of our  particle system, where 
$K^{\theta,\lambda}_{s}$ is replaced by $K^{\theta,\lambda}_{\e_N+s}$
with $\e_N=c(\log N)^{-\frac12}$ and, of course, they actually can work in any dimension. Such a result is obtained in two steps: first, a classical propagation of chaos with rate $e^{C/\e^2}N^{-1/2}$ using the Sznitman  coupling approach \cite{Sznitman} towards the smoothed limit equation, and then convergence of the smoothed equation with explicit rate.
In the same vein, in any dimension, Budhiraja-Fan \cite{Budhiraja} study a modified version of the doubly parabolic model, where the source term $\rho$ in the concentration equation is replaced by $\rho\ast g_1$. On the particle level,  $K^{\theta,\lambda}_{s}$ is replaced by $K^{\theta,\lambda}_{1+s}$ and the authors prove the trajectorial propagation of chaos, as in \cite{Sznitman}, and the uniform convergence of the associated Euler scheme.
The object of the present paper is rather to let first $\e\to 0$ and then $N\to \infty$.

\medskip
We also notice here that, in the two dimensional case, under a smallness condition on $\chi$, Toma\v sevi\'c~\cite{tomasevic.2} constructs the solution to a martingale problem related to~\eqref{eq:NLSDE}. The initial condition is supposed to be a probability density function in~\cite{tomasevic.2}, but the marginal laws of the solution to the martingale problem have some densities belonging to some mixed $L^q_t-L_x^p$ spaces. Even with more regularity on the initial condition, our method in the present paper does not allow us to find the martingale formulation of~\cite{tomasevic.2} as a limit of our particle system as $N\to \infty$. The main reason is that the only information we recover for the limit (along a subsequence) of the empirical measure of the particle system  is that its  marginal laws satisfy~\eqref{eq:ConditionMP}. We think it is very difficult to show, only using the latter, that some initial regularity propagates in time and that two martingale problems are equivalent. 

\medskip
As a conclusion, this work seems to be the first one deriving the $2D$ parabolic-parabolic Keller-Segel equation from a {\it non-smoothed} particle system. We are quite satisfied to show the existence of the particle system and its convergence along a subsequence even though we have a 
small threshold. However, at least for $\theta$ small or of order $1$ this threshold is non-ridiculously small. It should not come as a surprise that we only have convergence along a subsequence: Our notion of solution to~\eqref{EDP_KS} is so weak
that uniqueness seems very difficult to establish even with a smooth initial condition. 

\medskip 

Finally, observe that we have no assumption but exchangeability on the initial condition of the particle system: We can even start with all the particles at the same place.
In the same spirit,  we obtain a global existence result for \eqref{EDP_KS} derived from Theorem~\ref{th:mainTH} that is slightly different than the previous ones, as we only assume that $\rho_0 \in \Pp(\R^2)$
(which is already the case in \cite{Biler-Guerra-Karch}) and that $c_0 \in L^{2+\e}(\R^2)$, while all the previously cited papers work with $c_0\in H^1(\R^2)$.

\medskip

\paragraph{Strategy.}
The main point is to show that, when $\chi$ is small enough, something a bit stronger than~\eqref{PScond} holds true, uniformly in $N\geq 1$. 
To this end, we will introduce a $\e$-smoothed particle system, for which we will show 
that, considering e.g. 
particles $1$ and $2$ and 
setting $D^{1,2,N,\e}_s:=\int_0^s 
\nabla K^{\theta,\lambda}_{s-u}(X_s^{1,N,\e}-X_u^{2,N,\e}) \dd u$ 
\begin{equation}\label{ob2}
\sup_{N\geq 2} \E \Big[\int_0^t |D^{1,2,N,\e}_s|^{2(\gamma-1)}  \dd s\Big ]<\infty \qquad \text{for all $t>0$.}
\end{equation}
This estimate, which corresponds to \eqref{eq:Prop21-3} in Proposition \ref{prop:mc}, will be shown uniformly in $\e$.
This will allow us to pass to the limit as $\e\to 0$ in order to prove the existence of the non-smoothed particle system. This system satisfies, when setting $D^{1,2,N}_s:=\int_0^s 
\nabla K^{\theta,\lambda}_{s-u}(X_s^{1,N}-X_u^{2,N}) \dd u$,
\begin{equation}\label{ob}
\sup_{N\geq 2} \E \Big[\int_0^t |D^{1,2,N}_s|^{2(\gamma-1)}  \dd s\Big ]<\infty \qquad \text{for all $t>0$.}
\end{equation}
see \eqref{eq:Prop31-3} in the proof, and then to pass to the limit
as $N\to \infty$.

\medskip

Let us show why \eqref{ob} {\it a priori} holds. The difficulty when proving \eqref{ob} is to show that particles are not too close to each other. Indeed, $|D^{1,2,N}_s|$  may explode only if $X^{1,N}_s=X^{2,N}_s$, because $(u,x)=(s,0)$ is the only problematic point of $\nabla K^{\theta,\lambda}_{s-u}(x)$. Otherwise, the integral is well defined. We show in Proposition~\ref{lemma:LL} that
\begin{equation}\label{mark}
\E\Big[\int_0^t |D^{1,2,N}_s|^{2(\gamma-1)}  \dd s\Big] \leq C \E\Big[\int_0^t |X^{1,N}_s-X^{2,N}_s|^{-2(\gamma-1)}  \dd s\Big].
\end{equation}
The proof of this estimate is difficult to summarize and relies on a slightly special application of the It\^o formula shown in Lemma~\ref{lemma:grosIto}, used with a well-chosen function, on a bound of $|\nabla K^{\theta,\lambda}|$ obtained in Remark \ref{ttt}, and on a key functional inequality proved in Lemma~\ref{lemma:FI}.

\medskip
Observe that \eqref{mark} reveals, in some loose sense, that the drift term $D^{1,2,N}_s$ is controlled
by $|X^{1,N}_s-X^{2,N}_s|^{-1}$, which is precisely the singularity of the drift in the parabolic-elliptic case. This is what we call a \textit{Markovianization}:
we bound the {\it path dependent} interaction by a {\it current time dependent} one.

\medskip

Once this Markovianization is performed in Proposition \ref{lemma:LL}, we may 
conclude the proof of Proposition \ref{prop:mc} (including \eqref{ob}) by applying the strategy of \cite{fournier-jourdain}, that has been refined in \cite{FT,T}: Applying the It\^o formula,
and using exchangeability, we find (assuming that $c_0=0$ for simplicity)
\begin{align*}
\E[|X^{1,N}_t-X^{2,N}_t|^{4-2\gamma}]=&
\E[|X^{1,N}_0-X^{2,N}_0|^{4-2\gamma}]
+(4-2\gamma)^2  \int_0^t\E[|X^{1,N}_s-X^{2,N}_s|^{2-2\gamma}]\dd s\\
&+ \frac{4-2\gamma}{N-1}\chi \sum_{j=2}^N\int_0^t\E[|X^{1,N}_s-X^{2,N}_s|^{2-2\gamma} (X^{1,N}_s-X^{2,N}_s)\cdot D^{1,j,N}_s]\dd s.
\end{align*}
Using exchangeability again, \eqref{mark} and the H\"older inequality, one may control 
the last term by $C\chi \E[\int_0^t |X^{1,N}_s-X^{2,N}_s|^{-2(\gamma-1)}\dd s]$.
All this shows that 
$$
\E[|X^{1,N}_t-X^{2,N}_t|^{4-2\gamma}] \geq \E[|X^{1,N}_0-X^{2,N}_0|^{4-2\gamma}] 
+ ((4-2\gamma)^2- C \chi) \E\Big[\int_0^t |X^{1,N}_s-X^{2,N}_s|^{-2(\gamma-1)}\dd s\Big].
$$
Since now particles are subjected to attraction, there is no reason why they should be far from $0$. Since $4-2\gamma>0$,  we expect that
$\E[|X^{1,N}_t-X^{2,N}_t|^{4-2\gamma}]$ should be easily controlled, uniformly in $N\geq2$, by some constant $A_t$ 
(actually, we use as in \cite{FT} a slightly
more clever function than $|\cdot |^{4-2\gamma}$ and this last argument is useless), and we end with 
$$
((4-2\gamma)^2-C\chi) \E\Big[\int_0^t |X^{1,N}_s-X^{2,N}_s|^{-2(\gamma-1)}\dd s\Big] \leq A_t.
$$
Combined with \eqref{mark}, this gives us \eqref{ob} provided that $\chi<(4-2\gamma)/C$.

\medskip

\paragraph{Key functional inequality.} 
The following functional inequality, that we will prove in Appendix~\ref{sec:app} plays a central role in our main computation.

\begin{lemma}
\label{lemma:FI}
Let $b>a>0$ and $t>0$. For any measurable function $f:[0, t] \to \R_+  $, we have 
$$
\int_0^t \frac1{(s+ f(s))^{1+a}} \dd s\leq \kappa(a,b) \Big(\int_0^t \frac{1}{(s+ f(s))^{1+b}} \dd s\Big)^{\frac{a}{b}},\quad \hbox{where} \quad
\kappa(a,b)=\frac{a+1}{a}\Big[\frac{b}{b+1}\Big]^{\frac a b}.
$$
\end{lemma}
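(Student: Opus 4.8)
The plan is to prove Lemma~\ref{lemma:FI} by a direct optimization argument, treating the right-hand side integral as a fixed budget and showing the left-hand side integral cannot exceed the stated multiple of its $a/b$-power. Write $g(s) := s + f(s)$, so $g(s) \geq s$ since $f \geq 0$, and we must bound $I_a := \int_0^t g(s)^{-(1+a)}\dd s$ in terms of $I_b := \int_0^t g(s)^{-(1+b)}\dd s$. The key observation is that for any threshold $\eta > 0$ we can split the integral according to whether $g(s) \leq \eta$ or $g(s) > \eta$. On the set $\{g(s) > \eta\}$ we use $g(s)^{-(1+a)} = g(s)^{-(1+b)} g(s)^{b-a} \leq \eta^{-(b-a)} g(s)^{-(1+b)}$ is the wrong direction; instead, on $\{g(s) \le \eta\}$ we have $g(s)^{-(1+a)} \le \eta^{b-a} g(s)^{-(1+b)}$, and on $\{g(s) > \eta\}$ we use the crude bound $g(s)^{-(1+a)} \le s^{-(1+a)}$ only when convenient, or better, bound $g(s)^{-(1+a)} \le \eta^{-(1+a)}$.

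Here is the clean version I would carry out. Fix $\eta>0$. Split $[0,t] = A_\eta \cup B_\eta$ where $A_\eta = \{s : g(s) \le \eta\}$ and $B_\eta = \{s : g(s) > \eta\}$. On $A_\eta$, since $1+b > 1+a$, we have $g(s)^{-(1+a)} \le \eta^{\,b-a}\, g(s)^{-(1+b)}$, hence $\int_{A_\eta} g(s)^{-(1+a)}\dd s \le \eta^{\,b-a} I_b$. On $B_\eta$, we have $g(s)^{-(1+a)} \le \eta^{-(1+a)}$; but to get the right power of $I_b$ we should instead note $g(s) \ge s$ is not sharp enough, so on $B_\eta$ we bound $g(s)^{-(1+a)} \le g(s)^{-(1+b)}\eta^{\,b-a}$ fails; rather use $\int_{B_\eta} g(s)^{-(1+a)}\dd s \le \int_{B_\eta} s^{-(1+a)}\,\dd s$ — still not obviously bounded near $0$. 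The correct split is the reverse: on $B_\eta = \{g > \eta\}$ use $g(s)^{-(1+a)} \le \eta^{-(1+a)}$, giving $\le \eta^{-(1+a)}\,|B_\eta| \le \eta^{-(1+a)} t$ — this introduces $t$, which is not in the final constant. So the genuinely right approach treats the two pieces as: $\int_{A_\eta} g^{-(1+a)} \le \eta^{b-a} I_b$ and $\int_{B_\eta} g^{-(1+a)} \le \eta^{-1}\int_{B_\eta} g^{-a}\cdot$ — this still does not close.

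Let me restart with what actually works: optimize over the worst-case profile. Since $g(s) \ge s$, write $h(s) = g(s)^{-(1+b)}$, a measurable function with $h(s) \le s^{-(1+b)}$, and $g(s)^{-(1+a)} = h(s)^{(1+a)/(1+b)}$. So with $q = (1+a)/(1+b) \in (0,1)$ we must show $\int_0^t h(s)^q \dd s \le \kappa(a,b) \big(\int_0^t h(s)\dd s\big)^{a/b}$ over all measurable $h$ with $0 \le h(s) \le s^{-(1+b)}$. This is now a linear-programming-type problem: for fixed value of $\int_0^t h = V$, the concave functional $\int h^q$ is maximized by making $h$ as ``spread out'' as possible subject to the pointwise ceiling $h(s) \le s^{-(1+b)}$, i.e. the extremal $h$ is $h(s) = s^{-(1+b)}\mathbbm{1}_{\{s \le \tau\}}$ for the $\tau \le t$ with $\int_0^\tau s^{-(1+b)}\dd s = V$ — except this integral diverges at $0$. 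The resolution: the ceiling $h(s)\le s^{-(1+b)}$ is what forces $g(s) = s$, and then $\int_0^\tau s^{-(1+a)}\dd s = \tau^{-a}/a$ and $\int_0^\tau s^{-(1+b)}\dd s = \tau^{-b}/b$ both diverge, so one must use the ceiling from the bottom: extremal $h$ is $h(s) = s^{-(1+b)}$ for $s \ge \tau$ and $h(s) = \tau^{-(1+b)}$ (a constant cap, i.e. $g(s) \equiv \tau$, coming from $f(s) = \tau - s$) for $s < \tau$. Then $\int_0^t h = \tau^{-(1+b)}\tau + \int_\tau^t s^{-(1+b)}\dd s = \tau^{-b} + \frac{1}{b}(\tau^{-b} - t^{-b}) \le \frac{b+1}{b}\tau^{-b}$, and $\int_0^t h^q = \int_0^t g^{-(1+a)} = \tau^{-(1+a)}\tau + \int_\tau^t s^{-(1+a)}\dd s \le \frac{a+1}{a}\tau^{-a}$. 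Combining, $\int_0^t g^{-(1+a)} \le \frac{a+1}{a}\tau^{-a} = \frac{a+1}{a}\big(\tau^{-b}\big)^{a/b} \le \frac{a+1}{a}\big(\frac{b}{b+1}\int_0^t g^{-(1+b)}\big)^{a/b} = \kappa(a,b)\big(\int_0^t g^{-(1+b)}\big)^{a/b}$. So the proof is: reduce to showing the worst case is $g(s) = \max(s,\tau)$, then compute explicitly. \textbf{The main obstacle} is rigorously justifying that $g(s) = \max(s,\tau)$ is extremal — i.e. that for any admissible $g$ one has the pointwise/integral comparison allowing reduction to this profile; I would do this not by a full variational argument but by the following elementary trick: for any measurable $g$ with $g(s)\ge s$, set $\tau$ implicitly by $\int_0^t g(s)^{-(1+b)}\dd s = \frac{b+1}{b}\tau^{-b}$ (the RHS being a continuous decreasing bijection from $(0,\infty)$ onto $(t^{-b}\cdot\text{stuff},\infty)$, with the edge case $\tau \le t$ needing the remark that if the integral is too small the bound is trivial), and then show $\int_0^t g(s)^{-(1+a)}\dd s \le \frac{a+1}{a}\tau^{-a}$ directly by splitting at whether $g(s) \lessgtr \tau$ and using $g \ge s$ on each piece together with the defining relation for $\tau$. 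This split-and-compare step, checking the two elementary inequalities that fall out, is the only real content, and it is a short computation rather than a deep argument.
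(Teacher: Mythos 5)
Your final plan is genuinely different from the paper's proof, and the underlying idea is sound, so let me compare the two and then flag where your sketch leaves a real gap.

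The paper proceeds by reformulating the inequality as a maximization of $I_a(g)/[I_b(g)]^{a/b}$ over the class $\Gg = \{g: 0\le g(s)\le 1/s\}$, proving that a maximizer exists (weak $L^p$ compactness), then characterizing it by a first-variation (Euler--Lagrange) argument as $g(s)=\max\{k,s^{-1}\}$, and finally computing the constant. Your route instead fixes an arbitrary admissible $g$, defines the parameter $\tau$ by matching the ``budget'' $\int_0^t (s+f(s))^{-(1+b)}\dd s = \frac{b+1}{b}\tau^{-b}$, and then tries to show directly that $\int_0^t (s+f(s))^{-(1+a)}\dd s \le \frac{a+1}{a}\tau^{-a}$. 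This bypasses the existence-of-a-maximizer step entirely, which is a real simplification, and it correctly identifies the extremal profile $s+f(s)=\max(s,\tau)$.

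The gap is in the claim that the final step is ``a short computation rather than a deep argument,'' obtained ``by splitting at whether $g(s)\lessgtr\tau$ and using $g\ge s$ on each piece.'' If one does exactly that, the bounds do not close. Writing $g(s)=s+f(s)$, $A=\{g\le\tau\}$, $B=\{g>\tau\}$: on $A$ one gets $\int_A g^{-(1+a)}\le \tau^{b-a}\int_A g^{-(1+b)}\le \frac{b+1}{b}\tau^{-a}$, and on $B$ one has $g\ge\max(s,\tau)$, giving $\int_B g^{-(1+a)}\le\int_B\max(s,\tau)^{-(1+a)}\dd s\le\frac{a+1}{a}\tau^{-a}$. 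Adding these produces $\big(\frac{b+1}{b}+\frac{a+1}{a}\big)\tau^{-a}$, not $\frac{a+1}{a}\tau^{-a}$, and the budget constraint does not obviously let you trade the two overshoots off against each other. The step that actually makes the comparison work requires an extra ingredient beyond $g\ge s$ and the definition of $\tau$: you need to compare $g$ not to $\tau$ directly but to the extremal $\phi(s)=\max(s,\tau)$ (extended to all of $[0,\infty)$), split according to the sign of $g-\phi$, and then use either (i) concavity of $x\mapsto x^{(1+a)/(1+b)}$ applied to $g^{-(1+b)}$ versus $\phi^{-(1+b)}$, or equivalently (ii) the Cauchy mean value theorem applied to $F(x)=x^{-(1+a)}$, $G(x)=x^{-(1+b)}$, giving $\frac{F(g)-F(\phi)}{G(g)-G(\phi)}=\frac{1+a}{1+b}\xi^{b-a}$ for some $\xi$ between $g$ and $\phi$, which is $\le\frac{1+a}{1+b}\tau^{b-a}$ on $\{g<\phi\}\subset[0,\tau]$ and $\ge\frac{1+a}{1+b}\tau^{b-a}$ on $\{g>\phi\}\cup(t,\infty)$. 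Since $\int_0^t g^{-(1+b)}\dd s=\int_0^\infty\phi^{-(1+b)}\dd s$ forces the ``excess'' and ``deficit'' of $g^{-(1+b)}$ relative to $\phi^{-(1+b)}$ to balance, these two one-sided bounds then yield $\int_0^t g^{-(1+a)}\dd s\le\int_0^\infty\phi^{-(1+a)}\dd s=\frac{a+1}{a}\tau^{-a}$. So the plan is correct and arguably cleaner than the paper's, but the crucial ``two elementary inequalities'' are the MVT/concavity comparisons above, not the naive per-piece bounds, and you should write them out: as stated, the sketch gives the impression the elementary split suffices, and it does not.
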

The constant $\kappa(a,b)$ is optimal (for any value of $t>0$), as one can show by choosing
$f(s)=(\e-s)_+$ and by letting $\e\to0$. 

\paragraph{Plan of the paper.} In Section~\ref{sec:main}, we start from a regularized particle system and we present the main computation of the paper. 
Once this is done, Sections \ref{sec:tight}-\ref{sec:ex}-\ref{sec:conc} contain respectively a tightness result (Lemma~\ref{tight}), the existence result for the particle system (Proposition~\ref{prop:ex}) and that any limit point satisfies  \hyperref[defMP]{$\mathcal{(MP)}$} (Theorem~\ref{conv}). In Section~\ref{sec:condition}   we discuss  our smallness condition on the chemotactic sensitivity $\chi$ and we make it more explicit. Finally, in Appendix~\ref{sec:app} we prove Lemma~\ref{lemma:FI}. 

\section{Main computation}
\label{sec:main}
For $c_0\in L^p(\R^2)$ with $p>2$, for $\e\in (0,1]$ and $N\geq 2$, we introduce a smoothed version of the interaction kernel
\begin{equation}\label{Heps}
H_t^{\theta,\lambda, \varepsilon} (x):= \frac{t^2}{(t+\varepsilon)^2} \nabla K_t^{\theta,\lambda}(x)= -\frac{\theta}{8\pi (t+\varepsilon)^2} e^{-\frac{\lambda}{\theta}t } e^{- \frac{\theta}{4t}|x|^2} x,
\end{equation}
as well as the smoothed version of the Keller-Segel particle system: For all $i=1,\dots,N$,
\begin{align}
X_t^{i,N,\varepsilon}\!= &X_0^{i,N} \!+\! \sqrt{2} W_t^i \!+\! \chi \int_0^t \!\!\nabla b_{s+\e}^{c_0,\theta,\lambda}(X^{i,N,\e}_s)\dd s\!+\! \frac{\chi}{N-1} \sum_{j\neq i}\int_0^t \!\!\int_0^s \!\!H^{\theta,\lambda, \varepsilon}_{s-u} ( X^{i,N,\varepsilon}_s\!-\!X^{j,N,\varepsilon}_u) \dd u \dd s.
\label{eq:PSS}
\end{align} 
This system has a pathwise unique solution, as $\nabla b^{c_0,\theta,\lambda}_{t+\e}$ and $H_t^{\theta,\lambda, \varepsilon}$ are globally Lipschitz continuous, uniformly in $t\geq 0$. If the initial condition $(X^{i,N}_0)_{i=1,\dots,N}$ is exchangeable, then the family $((X^{i,N,\e}_t)_{t\geq 0})_{i=1,\dots,N}$ is also exchangeable by uniqueness in law.

\medskip

The constants $\kappa(a,b)$, for $b>a>0$, are defined in Lemma~\ref{lemma:FI} and for $\alpha,\beta,\theta>0$ and $\gamma>3/2$, we introduce
\begin{gather}
    \label{eq:C0}
    C_0(\beta):=\sup_{u\ge 0} \sqrt u (1+\beta u) ^{3/2} e^{-u}, \qquad
    C_1(\alpha,\gamma):=(\gamma-1)(1-4\alpha(\gamma-1)),\\
    \label{eq:C2}
    C_2(\theta,\alpha,\gamma):= \frac{\sqrt{\alpha\theta}(\gamma-1)}{2\pi}C_0\Big(\frac{4\alpha}{\theta}\Big) \kappa\Big(\frac{1}{2}, \gamma-1\Big)\kappa\Big(\gamma-\frac32,\gamma-1\Big).
\end{gather}
The goal of this section is to prove the following estimates, from which our main theorem will be more or less classically deduced, see e.g. Osada \cite{Osada85long}
and then \cite{FHM,fournier-jourdain}.
\begin{proposition}\label{prop:mc}
Assume that for each $N\geq 2$, the family $(X^{i,N}_0)_{i=1,\dots,N}$ is exchangeable
and that $c_0 \in L^p(\R^2)$ for some $p>2$.
Let $\gamma\in(\frac{3}{2}, \frac{2p+2}{p+2})$ and $\alpha>0$ such that $C_1(\alpha,\gamma)>0$. Assume that $\chi>0$ and $\theta>0$ are such that
$$C_1(\alpha,\gamma)>\chi C_2(\theta,\alpha,\gamma)
\qquad \hbox{and} \qquad (4-2\gamma)-\chi \frac{\sqrt{\theta} C_0\big(\frac{4\alpha}{\theta}\big) \kappa\big(\frac{1}{2}, \gamma-1\big)}{4\pi\sqrt{\alpha}[C_1(\alpha,\gamma) -\chi C_2(\theta,\alpha,\gamma)]^{\frac 1{2(\gamma-1)}}}>0.$$
Then for all $t>0$,
\begin{gather}
\label{eq:Prop21-1}
\sup_{\varepsilon\in(0,1], N\geq 2} \E \Big[\int_0^t  \frac{1}{|X_s^{1,N,\varepsilon}-X_s^{2,N,\varepsilon}|^{2(\gamma-1)}} \dd s\Big ]<\infty,\\
\label{eq:Prop21-2}
\sup_{\varepsilon\in(0,1], N\geq 2} \E \Big[\int_0^t  \int_0^s \frac{1}{(s-u+ |X_s^{1,N,\varepsilon}-X_u^{2,N,\varepsilon}|^{2})^\gamma} \dd u \dd s \Big ]<\infty,\\
\label{eq:Prop21-3prime}
\sup_{\varepsilon\in(0,1], N\geq 2} \E \Big[\int_0^t \int_0^s |\nabla K^{\theta,\lambda}_{s-u}(X_s^{1,N,\varepsilon}-X_u^{2,N,\varepsilon})|^{\frac{2\gamma} 3} \dd u   \dd s\Big ]<\infty,\\
\label{eq:Prop21-3}
\sup_{\varepsilon\in(0,1], N\geq 2} \E \Big[\int_0^t \Big( \int_0^s |\nabla K^{\theta,\lambda}_{s-u}(X_s^{1,N,\varepsilon}-X_u^{2,N,\varepsilon})| \dd u \Big)^{2(\gamma-1)}  \dd s\Big ]<\infty.
\end{gather}
\end{proposition}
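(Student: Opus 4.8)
The plan is to prove the four estimates \eqref{eq:Prop21-1}--\eqref{eq:Prop21-3} for the smoothed system \eqref{eq:PSS} in a single coupled argument, since $C_1(\alpha,\gamma)>\chi C_2(\theta,\alpha,\gamma)$ is exactly the condition under which a closed chain of inequalities becomes a genuine a priori bound rather than a tautology. The order is: first \eqref{eq:Prop21-3prime} and \eqref{eq:Prop21-3} are consequences of \eqref{eq:Prop21-2} via the pointwise bound on $|\nabla K^{\theta,\lambda}|$ from Remark \ref{ttt} and the functional inequality of Lemma \ref{lemma:FI} (applied with $f(u)\leftrightarrow|X^{1,N,\e}_s-X^{s-u,\,2,N,\e}|^2$, exponents $a=\gamma-\tfrac32$ and $a=\tfrac12$ against $b=\gamma-1$, which is where $C_2$ and the $\kappa$'s enter); and \eqref{eq:Prop21-2} in turn follows from \eqref{eq:Prop21-1} by the Markovianization estimate \eqref{mark} (Proposition \ref{lemma:LL}), applied to the smoothed kernel $H^{\theta,\lambda,\e}$ — so really everything reduces to controlling $\E\int_0^t|X^{1,N,\e}_s-X^{2,N,\e}_s|^{-2(\gamma-1)}\,\dd s$ uniformly in $\e$ and $N$.

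For that central bound I would apply the It\^o formula to a well-chosen increasing concave function of $|X^{1,N,\e}_t-X^{2,N,\e}_t|^2$ — as in \cite{FT}, something of the form $u\mapsto u^{2-\gamma}$ truncated or tilted so that its second derivative produces the coefficient $C_1(\alpha,\gamma)=(\gamma-1)(1-4\alpha(\gamma-1))$ against the martingale bracket, and so that the function stays bounded and we need no a priori moment control at infinity. Writing $Z_s=X^{1,N,\e}_s-X^{2,N,\e}_s$, the diffusion term gives a positive multiple of $\E\int_0^t|Z_s|^{-2(\gamma-1)}\dd s$ with constant $C_1$; the drift term splits into the self-interaction of particles $1$ and $2$ (controlled directly by \eqref{mark} for the smoothed kernel, then by H\"older against $\E\int|Z_s|^{-2(\gamma-1)}$ with constant $\chi C_2$) and the cross terms $D^{1,j,N,\e}_s$ and $D^{2,j,N,\e}_s$ for $j\neq 1,2$, which by exchangeability and H\"older also get absorbed into $\chi C_2 \E\int|Z_s|^{-2(\gamma-1)}$; the $\nabla b_{s+\e}$ contribution is harmless because \eqref{estib} and $\gamma<\tfrac{2p+2}{p+2}$ make $t^{-1/2-1/p}$ integrable against the relevant power. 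Collecting, one gets $\big(C_1(\alpha,\gamma)-\chi C_2(\theta,\alpha,\gamma)\big)\,\E\int_0^t|Z_s|^{-2(\gamma-1)}\dd s\leq A_t$ with $A_t$ depending only on $t$, $\theta$, $\chi$, $p$, $\|c_0\|_{L^p}$ (the initial term drops out by the boundedness of the chosen test function, or is handled by the second displayed smallness condition exactly as sketched after \eqref{mark}), and the first smallness hypothesis closes it.

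The main obstacle, and the place where the two displayed conditions of the Proposition come from, is making the It\^o computation \emph{closed}: the cross terms $D^{1,j,N,\e}_s$ are themselves path-dependent, so before one can even write "H\"older against $\E\int|Z|^{-2(\gamma-1)}$" one must already have \eqref{mark} available, which requires the ingredients (Lemma \ref{lemma:grosIto}, Remark \ref{ttt}, Lemma \ref{lemma:FI}) to be applied \emph{inside} the estimate, and one must check that all constants are finite a priori for the smoothed system (true since $H^{\theta,\lambda,\e}$ is Lipschitz, so \eqref{eq:PSS} is well-posed and every moment is finite for fixed $\e$) so that the final inequality can be \emph{subtracted} — this a-priori finiteness for fixed $\e$ is what legitimizes moving the $\chi C_2$ term to the left-hand side, and the uniformity in $\e,N$ then comes purely from the structure of the constants. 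The second smallness condition $(4-2\gamma)-\chi\frac{\sqrt\theta C_0 \kappa}{4\pi\sqrt\alpha[C_1-\chi C_2]^{1/(2(\gamma-1))}}>0$ is needed for precisely the "$\E[|Z_t|^{4-2\gamma}]$ is controlled" step: the exponent $4-2\gamma\in(0,1)$ forces one to run a second It\^o/Gronwall argument on $|Z_t|^{4-2\gamma}$ whose drift involves $\E\int|Z_s|^{-2(\gamma-1)}$ raised to the power $\tfrac1{2(\gamma-1)}$ (via the H\"older pairing of $|Z|^{2-2\gamma}(Z\cdot D)$), and that is where $[C_1-\chi C_2]^{1/(2(\gamma-1))}$ surfaces in the denominator.
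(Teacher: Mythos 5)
The overall reduction chain you propose — \eqref{eq:Prop21-3prime} and \eqref{eq:Prop21-3} from \eqref{eq:Prop21-2} via Remark~\ref{ttt} and Lemma~\ref{lemma:FI}, \eqref{eq:Prop21-2} from \eqref{eq:Prop21-1} via the Markovianization \eqref{eq:borneSt}, and \eqref{eq:Prop21-1} from It\^o applied to a bounded concave function of the pairwise distance — is indeed what the paper does, and your choice $u\mapsto u^{2-\gamma}$ of $u=|x-y|^2$, bounded off at infinity, matches the paper's $\psi(x,y)=\phi(|x-y|^2)$ with $\phi(r)=r^{\nu/2}/(1+r^{\nu/2})$, $\nu=4-2\gamma$.

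However, the way you distribute the two smallness hypotheses across the argument is scrambled, and this is not a cosmetic issue. First, the Laplacian of $\psi$ does \emph{not} produce the coefficient $C_1(\alpha,\gamma)=(\gamma-1)(1-4\alpha(\gamma-1))$: a direct computation gives $\Delta_x\psi(x,y)\sim\nu^2|x-y|^{\nu-2}=(4-2\gamma)^2|x-y|^{2(1-\gamma)}$ near the diagonal, and this $(4-2\gamma)^2$ is the coefficient that appears in front of $\E[\int_0^t|R^{1,2}_{s,s}|^{2(1-\gamma)}\,\dd s]$. The constant $C_1$ arises from an entirely \emph{different} It\^o computation — the one inside Proposition~\ref{lemma:LL}, carried out via Lemma~\ref{lemma:grosIto} on the space-time function $F(t,x)=-(t+\alpha|x|^2)^{1-\gamma}$. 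Second, the drift contribution is not absorbed with constant $\chi C_2$: applying \eqref{eq:borneDrift} and \eqref{eq:borneSt} produces an absorption constant
\[
\chi\,\frac{(4-2\gamma)\sqrt{\theta}\,C_0\big(\tfrac{4\alpha}{\theta}\big)\,\kappa\big(\tfrac12,\gamma-1\big)}{4\pi\sqrt{\alpha}\,\big[C_1(\alpha,\gamma)-\chi C_2(\theta,\alpha,\gamma)\big]^{\frac{1}{2(\gamma-1)}}},
\]
where the $[C_1-\chi C_2]^{-1/(2(\gamma-1))}$ comes from \eqref{eq:borneSt}. Consequently the final inequality is \emph{not} $(C_1-\chi C_2)\E\int_0^t|Z_s|^{-2(\gamma-1)}\,\dd s\le A_t$, and the first hypothesis $C_1>\chi C_2$ does not by itself close the a~priori estimate; its role is only to make the Markovianization bound \eqref{eq:borneSt} finite (positive denominator). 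What actually closes \eqref{eq:Prop21-1} is the positivity of
\[
(4-2\gamma)^2-\chi\,\frac{(4-2\gamma)\sqrt{\theta}\,C_0\big(\tfrac{4\alpha}{\theta}\big)\,\kappa\big(\tfrac12,\gamma-1\big)}{4\pi\sqrt{\alpha}\,\big[C_1(\alpha,\gamma)-\chi C_2(\theta,\alpha,\gamma)\big]^{\frac{1}{2(\gamma-1)}}},
\]
which is (after dividing by $4-2\gamma$) precisely the \emph{second} displayed hypothesis. Finally, there is no separate ``second It\^o/Gronwall argument on $|Z_t|^{4-2\gamma}$'': the single It\^o on the bounded $\psi$ suffices (that is exactly why the paper replaces $|Z|^{4-2\gamma}$ by $\psi$), and the second hypothesis is consumed inside that one computation, not in a further step. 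As written, your argument would lead one to believe that $C_1>\chi C_2$ alone yields \eqref{eq:Prop21-1}, which is false and contradicts the hypotheses of the Proposition; the gap is in misattributing which It\^o produces $C_1$ and which step consumes the second smallness condition.
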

It is important to notice that the exponents $2(\gamma-1)$ and $\frac{2\gamma}{3}$ are both greater than $1$.
For some comments about the interest of these estimates and the strategy to prove them, we refer to the paragraph {\it Strategy} in Section~\ref{inin}. 
Let us first make the following observation.

\begin{remark}\label{ttt}
For any $\e\in (0,1]$, any $\alpha>0$, any $t>0$, any $x \in \R^2$, we have
$$
|\nabla K_t^{\theta,\lambda} (x)| \leq 
\frac{\sqrt \theta C_0\big(\frac{4\alpha}{\theta}\big)}{4\pi(t+\alpha |x|^2)^{\frac{3}{2}}}
\qquad \hbox{and} \qquad
|H_t^{\theta,\lambda, \varepsilon} (x)| \leq 
\frac{\sqrt \theta C_0\big(\frac{4\alpha}{\theta}\big)}{4\pi(t+\e+\alpha |x|^2)^{\frac{3}{2}}}.
$$
\end{remark}
\begin{proof}
Let us for example study the case of $H^{\theta,\lambda,\e}_t(x)$. We write
$$|H_t^{\theta,\lambda, \varepsilon} (x)|= \frac{\theta}{8\pi (t+\varepsilon)^2} e^{-\frac{\lambda}{\theta}t } e^{- \frac{\theta}{4t}|x|^2} |x|\leq
\frac{1}{(t+\e)^{3/2}} \frac{\theta}{8\pi}\frac{|x|} {(t+\e)^{1/2}}  e^{- \frac{\theta}{4(t+\e)}|x|^2}.
$$
Setting $u=\frac{\theta}{4(t+\e)}|x|^2$, this rewrites 
$$
|H_t^{\theta,\lambda, \varepsilon} (x)|\leq
\frac{1}{(t+\e)^{3/2}} \frac{\sqrt \theta}{4\pi} \sqrt u e^{-u}
\leq \frac{1}{(t+\e)^{3/2}} \frac{\sqrt \theta}{4\pi} \frac{C_0\big(\frac{4\alpha}\theta\big)}{(1+\frac{4\alpha}\theta u)^{3/2}}
$$
by definition \eqref{eq:C0} of $C_0$. The result is proved, since
$(t+\e)(1+\frac{4\alpha}\theta u)=t+\e+\alpha |x|^2$.
\end{proof}

During the whole section we drop the superscript $N,\varepsilon$, i.e. we write $X^{i}_t=X^{i,N,\e}_t$, but we keep in mind that all the estimates have to be uniform in these parameters.  In addition, we define
\begin{gather*}
R_{t,s}^{i,j}:= X^{i}_t-X^{j}_s \qquad \hbox{and} \qquad
D^{i,j}_t:=\int_0^t H^{\theta,\lambda,\varepsilon}_{t-s} (R_{t,s}^{i,j})  \dd s.
\end{gather*}
We start with the following It\^o formula.
\begin{lemma}
\label{lemma:grosIto}
Let $F: \R_+ \times \R^2 \to \R$ be of class $C^{1,2}_b(\R_+ \times \R^2)$. For all $t>0$,
\begin{align*}
    \E \Big[ \int_0^t F(t-s, R_{t,s}^{1,2}) \dd s \Big ] =& \E \Big[ \int_0^t F(0, R_{s,s}^{1,2}) \dd s \Big ] + \E \Big[\int_0^t \int_0^u (\partial_t F + \Delta F) (u-s, R_{u,s}^{1,2})  \dd s \dd u \Big]\\
    &+\chi \E\Big[\int_0^t \Big(\int_0^u \nabla F (u-s, R_{u,s}^{1,2}) \dd s \Big)   \cdot \nabla b^{c_0,\theta,\lambda}_{u+\e}(X^{1}_u) \dd u\Big]\\
    &+ \frac{\chi}{N-1} \sum_{j=2}^N \E \Big[\int_0^t \Big(\int_0^u \nabla F (u-s, R_{u,s}^{1,2})  \dd s \Big)  \cdot D^{1,j}_u \dd u \Big].
\end{align*}
\end{lemma}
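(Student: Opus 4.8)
The plan is to fix $s$, apply the classical It\^o formula to the (true, continuous) semimartingale $u\mapsto F(u-s,R^{1,2}_{u,s})$ on $[s,t]$, then integrate over $s\in[0,t]$ and exchange the order of integration via an ordinary and a stochastic Fubini theorem. First I would record that, since \eqref{eq:PSS} has globally Lipschitz coefficients (uniformly in $t\geq 0$), $X^1=X^{1,N,\e}$ is a genuine continuous semimartingale with
\begin{align*}
\dd X^1_u = \sqrt2\,\dd W^1_u + B_u\,\dd u, \qquad B_u:=\chi\,\nabla b^{c_0,\theta,\lambda}_{u+\e}(X^1_u)+\frac{\chi}{N-1}\sum_{j=2}^N D^{1,j}_u,
\end{align*}
and that $B$ is continuous, adapted, and \emph{bounded} on $[0,t]$: indeed $\e>0$ makes $\nabla b^{c_0,\theta,\lambda}_{\cdot+\e}$ bounded by \eqref{estib}, while $|D^{1,j}_u|\leq\int_0^u\|H^{\theta,\lambda,\e}_{u-r}\|_\infty\,\dd r\leq \frac{\sqrt\theta\,C_0(4\alpha/\theta)}{2\pi\sqrt\e}$ by Remark~\ref{ttt}. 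For fixed $s\leq t$, only the first variable of $R^{1,2}_{u,s}=X^1_u-X^2_s$ evolves with $u$, and each component of $\sqrt2\,W^1$ has quadratic variation $2u$, so It\^o's formula on $[s,t]$ gives
\begin{align*}
F(t-s,R^{1,2}_{t,s}) ={}& F(0,R^{1,2}_{s,s}) + \int_s^t (\partial_t F+\Delta F)(u-s,R^{1,2}_{u,s})\,\dd u \\
&+ \sqrt2\int_s^t \nabla F(u-s,R^{1,2}_{u,s})\cdot\dd W^1_u + \int_s^t \nabla F(u-s,R^{1,2}_{u,s})\cdot B_u\,\dd u.
\end{align*}

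Next I would integrate this identity over $s\in[0,t]$ and swap the two integrations in each term, using that $\{(s,u):0\leq s\leq u\leq t\}$ can be described as $0\leq s\leq u$ with $u\in[0,t]$; this is legitimate for the $\dd u$-terms because $\partial_t F,\Delta F,\nabla F$ are bounded (as $F\in C^{1,2}_b$) and $B$ is bounded on $[0,t]$, and for the $\dd W^1_u$-term by the stochastic Fubini theorem with the same boundedness of $\nabla F$. This produces the inner integral $\int_0^u \nabla F(u-s,R^{1,2}_{u,s})\,\dd s$ in the last three terms. Taking expectations, the stochastic integral $\int_0^t\big(\int_0^u\nabla F(u-s,R^{1,2}_{u,s})\,\dd s\big)\cdot\dd W^1_u$ has an integrand bounded by $t\,\|\nabla F\|_\infty$, hence is a true $L^2$-martingale started from $0$ and drops out; splitting $B_u$ into its $\nabla b$-part and its $\frac{\chi}{N-1}\sum_{j=2}^N D^{1,j}_u$-part then yields exactly the four terms in the statement.

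The computation is essentially routine; there is no deep obstacle, and the only points deserving (standard) care are the application of the stochastic Fubini theorem and the verification that the It\^o stochastic integral is a genuine martingale rather than merely a local one. Both are settled by the uniform boundedness on $[0,t]$, for fixed $\e\in(0,1]$, of $\nabla F$ and of the drift $B$, and it is worth stressing that this boundedness—and hence the clean form of the identity—crucially uses the regularisation $\e>0$; the analogue for the non-smoothed system is obtained afterwards by a limiting argument.
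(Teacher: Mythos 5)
Your proof is correct and follows essentially the same strategy as the paper's: apply the ordinary It\^o formula to $u\mapsto F(u-s,R^{1,2}_{u,s})$ on $[s,t]$ for fixed $s$, then integrate in $s$ and swap integration orders. The one small difference is ordering: the paper takes expectations first (so the stochastic integral term disappears for each fixed $s$, and only the classical Fubini theorem is needed for the Lebesgue terms), whereas you integrate in $s$ before taking expectations and therefore invoke the stochastic Fubini theorem for the martingale part; both routes are legitimate and rest on the same boundedness observations, which you correctly flag as relying on $\e>0$.
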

\begin{proof}
Recalling \eqref{eq:PSS}, we have, for $t>s>0$,
$$
R^{1,2}_{t,s}=R^{1,2}_{s,s}+X^1_t-X^1_s=R^{1,2}_{s,s}+ \sqrt 2 (W^{1}_t-W^1_s)+\chi \int_s^t \!\!\nabla b_{u+\e}^{c_0,\theta,\lambda}(X^{1}_u)\dd u\!+\! \frac{\chi}{N-1} \sum_{j=2}^N\int_s^t D^{1,j}_{u} \dd u.
$$
Applying the It\^o formula on $[s,t]$, we find 
\begin{align*}
   \E [ F(t-&s, R_{t,s}^{1,2}) ] =  \E [ F(0, R_{s,s}^{1,2})]+ \E \Big[\int_s^t  (\partial_t F + \Delta F) (u-s, R_{u,s}^{1,2})   \dd u \Big]\\
    &+\chi \E\Big[\int_s^t  \nabla F (u-s, R_{u,s}^{1,2})    \cdot \nabla b^{c_0,\theta,\lambda}_{u+\e}(X^{1}_u) \dd u\Big]+ \frac{\chi}{N-1} \sum_{j=2}^N \E \Big[\int_s^t  \nabla F (u-s, R_{u,s}^{1,2})    \cdot D^{1,j}_u \dd u\Big].
\end{align*}
Integrating the formula in $s$ on $[0,t]$ and applying the Fubini theorem completes the proof.
\end{proof}

The next result shows that, in some loose sense, we can reduce to the parabolic-elliptic case. Gathering the estimates below, we see that
$\E[\int_0^t |D^{1,2}_u|^{2(\gamma-1)}\dd u] \leq C \E[\int_0^t|R^{1,2}_{u,u}|^{2(1-\gamma)}]+C$.
Hence, roughly, we control the drift $|D^{1,2}_u|$ by  $|R^{1,2}_{u,u}|^{-1}$, which does not depend on the past of the particles and has the homogeneity of the drift of the parabolic-elliptic particle system.

\begin{proposition}\label{lemma:LL} 
Assume that $(X^{i,N}_0)_{i=1,\dots,N}$ is exchangeable and that $c_0 \in L^p(\R^2)$ for some $p>2$.
Consider $\chi>0$, $\theta>0$, $\gamma  \in (\frac 32,\frac{2p+2}{p+2})$
and $\alpha>0$ such that
$C_1(\alpha,\gamma)>0$ and $\chi C_2(\theta,\alpha,\gamma)<C_1(\alpha,\gamma)$.
Then for all $\varepsilon\in(0,1]$, all $N\geq 1$, all $t>0$ and all $j=2,\dots,N$,
\begin{equation}
\label{eq:borneDrift}
|D^{1,j}_t |\leq \frac{\sqrt \theta C_0\big(\frac{4\alpha}\theta\big)\kappa\big(\frac{1}{2}, \gamma-1\big)}{4\pi}   (S^{1,j}_t)^{\frac{1}{2(\gamma-1)}}
\qquad \hbox{where}\qquad S^{1,j}_t:= \int_0^t \frac{1}{(t-s + \alpha |R^{1,j}_{t,s}|^2)^{\gamma}} \dd s,
\end{equation}
and for any $\eta>0$, there is a constant $A_\eta=A_\eta(c_0,p,\chi,\theta,\gamma,\alpha)$ such that 
\begin{equation}\label{eq:borneSt}
\E\Big[ \int_0^t S_u^{1,j} \dd u \Big]\leq \frac{(1+\eta)\alpha^{1-\gamma}}{C_1(\alpha,\gamma) -\chi C_2(\theta,\alpha,\gamma)} \E\Big[ \int_0^t \frac{1}{|R_{u,u}^{1,j}|^{2(\gamma-1)}}\dd u \Big] + A_\eta t^{r_p}
\end{equation}
for all $t>0$ and all $j=1,\dots,N$,
where $r_p=1-(\gamma-1)(1+\frac2p)>0$ (because $\gamma<\frac{2p+2}{p+2}$).
\end{proposition}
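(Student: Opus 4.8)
\textbf{Proof plan for Proposition~\ref{lemma:LL}.}

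The plan is to establish \eqref{eq:borneDrift} first, as it is a direct consequence of Remark~\ref{ttt} and the functional inequality of Lemma~\ref{lemma:FI}, and then to derive \eqref{eq:borneSt} via a careful application of the It\^o formula of Lemma~\ref{lemma:grosIto} with a well-chosen function $F$.

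For \eqref{eq:borneDrift}: starting from $|D^{1,j}_t|\leq \int_0^t |H^{\theta,\lambda,\e}_{t-s}(R^{1,j}_{t,s})|\dd s$ and bounding the integrand by Remark~\ref{ttt} with parameter $\alpha$, we get $|D^{1,j}_t|\leq \frac{\sqrt\theta C_0(4\alpha/\theta)}{4\pi}\int_0^t (t-s+\e+\alpha|R^{1,j}_{t,s}|^2)^{-3/2}\dd s$. Dropping the harmless $+\e$ and writing the exponent $3/2 = 1 + \frac12$, Lemma~\ref{lemma:FI} applied with $a=\frac12$, $b=\gamma-1$ (legitimate since $\gamma>\frac32$ gives $b>a>0$) and $f(s)=\alpha|R^{1,j}_{t,s-}|^2$ converts this into $\kappa(\frac12,\gamma-1)(S^{1,j}_t)^{1/(2(\gamma-1))}$ after a change of variable $s\mapsto t-s$. (A small care: Lemma~\ref{lemma:FI} is stated for $s+f(s)$, so I would first substitute to put the integral in the form $\int_0^t (s+\alpha|R^{1,j}_{t,t-s}|^2)^{-3/2}\dd s$.)

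For \eqref{eq:borneSt}, which is the heart of the argument: I would apply Lemma~\ref{lemma:grosIto} with $F(v,x)=\alpha^{1-\gamma}\big[(v+\alpha|x|^2)^{1-\gamma}-c_{v}\big]$ — essentially a regularized version of $(v+\alpha|x|^2)^{1-\gamma}$, truncated and mollified near the origin so that $F\in C^{1,2}_b$, with the truncation removed at the end by monotone convergence. The point of this choice is twofold. First, $(\partial_t F+\Delta F)(v,x)$ should, up to the regularization, equal a negative multiple of $(v+\alpha|x|^2)^{-\gamma}$ — more precisely, a computation of $\partial_t$ and $\Delta$ on $(v+\alpha|x|^2)^{1-\gamma}$ in $\R^2$ yields exactly $-C_1(\alpha,\gamma)\,\alpha^{1-\gamma}(v+\alpha|x|^2)^{-\gamma}$ with $C_1(\alpha,\gamma)=(\gamma-1)(1-4\alpha(\gamma-1))$ as in \eqref{eq:C0} (here the $2$-dimensionality enters through $\Delta|x|^2=4$). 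Integrating this term in $s$ from $0$ to $u$ produces $-C_1(\alpha,\gamma)\,\alpha^{1-\gamma} S^{1,2}_u$ (after the substitution identifying the inner integral with $S^{1,2}_u$), and then integrating in $u$ and using exchangeability ($S^{1,2}$ has the same law as $S^{1,j}$) gives the left side of \eqref{eq:borneSt} up to sign. Second, $F(0,R^{1,2}_{s,s})=\alpha^{1-\gamma}(\alpha|R^{1,2}_{s,s}|^2)^{1-\gamma}-\text{const}= |R^{1,2}_{s,s}|^{2(1-\gamma)}-\text{const}$, producing the first term on the right of \eqref{eq:borneSt}. It remains to bound the two drift contributions. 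The $\nabla b$-term is controlled using the bound $|\nabla b^{c_0,\theta,\lambda}_{u+\e}(X^1_u)|\leq A(u+\e)^{-1/2-1/p}\|c_0\|_{L^p}$ from \eqref{estib}, Cauchy--Schwarz or H\"older to pull out $\int_0^u\nabla F\,\dd s$, and then reabsorbing the resulting $S$-type quantity with a small weight $\eta$ into the main term; the leftover integrable-in-time factor contributes the $A_\eta t^{r_p}$ with $r_p=1-(\gamma-1)(1+2/p)>0$, the positivity coming from $\gamma<\frac{2p+2}{p+2}$. The interaction term $\frac{\chi}{N-1}\sum_{j=2}^N\E[\int_0^t(\int_0^u\nabla F\,\dd s)\cdot D^{1,j}_u\,\dd u]$ is estimated using $|\nabla F(v,x)|\lesssim \alpha^{1-\gamma}(v+\alpha|x|^2)^{-\gamma}\cdot 2\alpha|x| \cdot(\gamma-1)$, so that $|\int_0^u\nabla F(u-s,R^{1,2}_{u,s})\dd s|$ is comparable to a fractional power of $S^{1,2}_u$ via another application of Lemma~\ref{lemma:FI} (with $a=\gamma-\frac32$, $b=\gamma-1$, which requires $\gamma>\frac32$ and $\gamma<2$), combined with the bound \eqref{eq:borneDrift} on $|D^{1,j}_u|$ and H\"older in $\omega$ and exchangeability; the exponents are arranged precisely so that the product $(S^{1,2}_u)^{(\gamma-3/2)/(\gamma-1)}\cdot(S^{1,j}_u)^{1/(2(\gamma-1))}$ recombines, after H\"older, into $\E[\int_0^t S^{1,2}_u\,\dd u]$ with constant $\chi C_2(\theta,\alpha,\gamma)$ (this is exactly where $C_0$, $\kappa(\frac12,\gamma-1)$ and $\kappa(\gamma-\frac32,\gamma-1)$ assemble into \eqref{eq:C2}). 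Moving this term to the left-hand side, using $C_1-\chi C_2>0$, and absorbing the $\nabla b$-contribution as above yields \eqref{eq:borneSt}.

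The main obstacle I anticipate is the bookkeeping in the interaction term: one must apply Lemma~\ref{lemma:FI} twice with complementary exponents ($\frac12$ and $\gamma-\frac32$, both measured against $\gamma-1$) and then a H\"older inequality in the probability space — with exponents $2(\gamma-1)$ and its conjugate, matching the exponent in \eqref{eq:borneDrift} — in just the right order so that everything collapses back to $\E[\int_0^t S^{1,2}_u\,\dd u]$ rather than to some strictly larger quantity; the constraint $\gamma<2$ (equivalently $\gamma-\frac32<\gamma-1$) is exactly what makes the second use of Lemma~\ref{lemma:FI} legitimate. A secondary technical point is justifying the use of Lemma~\ref{lemma:grosIto} with the singular function $(v+\alpha|x|^2)^{1-\gamma}$, which is not in $C^{1,2}_b$: this requires a truncation/mollification near $(0,0)$ and a passage to the limit (monotone convergence for the $S$-term and the $|R_{s,s}|^{2(1-\gamma)}$-term, dominated convergence controlled by the already-established bound \eqref{eq:borneDrift} for the drift terms), which I would carry out carefully but not belabor.
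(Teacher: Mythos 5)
Your plan matches the paper's proof step for step: Remark~\ref{ttt} plus Lemma~\ref{lemma:FI} with $(a,b)=(\tfrac12,\gamma-1)$ for \eqref{eq:borneDrift}, then Lemma~\ref{lemma:grosIto} applied to a regularization of $\pm(v+\alpha|x|^2)^{1-\gamma}$, a second use of Lemma~\ref{lemma:FI} at $(a,b)=(\gamma-\tfrac32,\gamma-1)$, H\"older in $\E\otimes\int\dd t$, Young for the $\nabla b$-term, and exchangeability for \eqref{eq:borneSt} --- with your sign convention (the paper takes $F=-(v+\alpha|x|^2)^{1-\gamma}$ and regularizes by $v\mapsto v+\delta$ so the left side of Lemma~\ref{lemma:grosIto} is nonpositive, but your positive $F\geq 0$ gives the same bound after rearranging). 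Two small algebraic slips to fix: $\partial_t F+\Delta F$ only satisfies an \emph{inequality} against $-C_1(\alpha,\gamma)(v+\alpha|x|^2)^{-\gamma}$ (the bound $\alpha^2|x|^2\le\alpha(v+\alpha|x|^2)$ is used, it is not an identity), and the prefactor $\alpha^{1-\gamma}$ you build into $F$ double-counts the power of $\alpha$, since then $F(0,x)=\alpha^{2(1-\gamma)}|x|^{2(1-\gamma)}$ rather than $\alpha^{1-\gamma}|x|^{2(1-\gamma)}$ --- just drop that prefactor.
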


Observe that for any $\chi>0$, any $\theta>0$ and any $\gamma\in (\frac32,\frac{2p+2}{p+2})$, the two conditions
$C_1(\alpha,\gamma)>0$ and $\chi C_2(\theta,\alpha,\gamma)<C_1(\alpha,\gamma)$ are satisfied
for $\alpha>0$ small enough, because $\lim_{\alpha\to 0} C_2(\theta,\alpha,\gamma)=0$ and  $\lim_{\alpha\to 0} C_1(\alpha,\gamma)=\gamma-1$. The real restrictions will come later.

\begin{proof}
It of course suffices to treat the case where $j=2$.

\medskip

{\it Step 1.} We first prove \eqref{eq:borneDrift}. 
By Remark \ref{ttt}, it holds that
$$|D^{1,2}_t|\leq \frac{\sqrt \theta C_0\big(\frac{4\alpha}\theta\big)}{4\pi} \int_0^t \frac{1}{(t-s+\e+\alpha |R^{1,2}_{t,s}|^2)^{\frac{3}{2}}} \dd s
= \frac{\sqrt \theta C_0\big(\frac{4\alpha}\theta\big)}{4\pi} \int_0^t \frac{1}{(s+\e+\alpha |R^{1,2}_{t,t-s}|^2)^{\frac{3}{2}}} \dd s
.$$
Applying Lemma~\ref{lemma:FI} with $a=\frac{1}{2}$,  $b=\gamma-1$ and $f(s)= \alpha |R^{1,2}_{t,t-s}|^2$, it comes
\begin{align}\label{newS}
|D^{1,2}_t|\leq & \frac{\sqrt\theta C_0\big(\frac{4\alpha}\theta\big)\kappa\big(\frac{1}{2}, \gamma-1\big)}{4\pi}  \Big( \int_0^t \frac{1}{(s+\e+\alpha |R^{1,2}_{t,t-s}|^2)^{\gamma}} \dd s\Big)^{\frac{1}{2(\gamma-1)}}\notag\\
=& \frac{\sqrt \theta C_0\big(\frac{4\alpha}\theta\big)\kappa\big(\frac{1}{2}, \gamma-1\big)}{4\pi}   (\bar S^{1,2,\e}_t)^{\frac{1}{2(\gamma-1)}},
\end{align}
where for any $\delta>0$, we have set
\begin{equation}\label{barS}
\bar S^{1,j,\delta}_t:= \int_0^t \frac{1}{(t-s +\delta+ \alpha |R^{1,j}_{t,s}|^2)^{\gamma}} \dd s \leq S^{1,j}_t.
\end{equation}

\textit{Step 2.} Let $F(t, x)= - (t+\alpha|x|^2)^{1-\gamma}$. We have 
$$\nabla F(t,x)= 2\alpha (\gamma-1) (t+\alpha|x|^2)^{-\gamma} x$$
and 
\begin{align*}
(\partial_t F + \Delta F)(t,x) =& (\gamma-1)(t+\alpha|x|^2)^{-\gamma-1} [(1+4\alpha)(t + \alpha |x|^2)- 4\alpha^2\gamma |x|^2]\\
\geq& (\gamma-1)(1+4\alpha-4\alpha\gamma)(t+\alpha|x|^2)^{-\gamma} \\
=&C_1(\alpha,\gamma)(t+\alpha|x|^2)^{-\gamma}.
\end{align*}

\textit{Step 3.} We now prove \eqref{eq:borneSt}.  We apply Lemma~\ref{lemma:grosIto} with the function $F$ introduced in Step~2, or rather with the smooth function $F(\delta+t,x)$, for some $\delta>0$ that we will tend to $0$. At first reed, one can take $\delta=0$: The computations are then slightly informal since $F$ is not smooth, but this makes disappear some terms that are  actually not very important. 
We find
\begin{equation}\label{eq:abeg}
I^{\delta,1}_t= I^{\delta,2}_t+I^{\delta,3}_t +I^{\delta,4}_t+\frac{1}{N-1} \sum_{j=2}^N A^{\delta,1,j}_t,
\end{equation}
where 
\begin{align}
I^{\delta,1}_t:=&\E \Big[\int_0^t F(\delta+t-s, R^{1,2}_{t,s}) \dd s\Big]\leq 0, \label{eq:a}\\
I^{\delta,2}_t :=&\E \Big[\int_0^t F(\delta, R^{1,2}_{s,s}) \dd s\Big]= -\E \Big[\int_0^t (\delta + \alpha |R^{1,2}_{s,s}|^2)^{1-\gamma} \dd s\Big],\label{eq:b}\\
I^{\delta,3}_t:=&  \E \Big[\int_0^t \int_0^u (\partial_t F + \Delta F) (\delta+u-s, R_{u,s}^{1,2})  \dd s \dd u \Big], \notag\\ 
I^{\delta,4}_t:=&\chi \E\Big[\int_0^t \Big(\int_0^u \nabla F (\delta+u-s, R_{u,s}^{1,2}) \dd s \Big)   \cdot \nabla b^{c_0,\theta,\lambda}_{u+\e}(X^{1}_u) \dd u\Big], \notag\\
A^{\delta,1,j}_t:=&  \chi \E \Big[\int_0^t \Big(\int_0^u  \nabla F(\delta+u-s, R_{u,s}^{1,2})     \dd s\Big)\cdot D^{1,j}_u \dd u\Big]. \notag
\end{align}
Using Step 2, we find
\begin{align}    
I^{\delta,3}_t\geq& C_1(\alpha,\gamma)\E\Big[\int_0^t\int_0^u(\delta+u-s+\alpha|R^{1,2}_{u,s}|^2)^{-\gamma} \dd s \dd u \Big]
= C_1(\alpha,\gamma)\E\Big[\int_0^t \bar S^{1,2,\delta}_u \dd u \Big], \label{eq:e}
\end{align}
recall \eqref{barS}. Next, we write
\begin{align}\label{eq:calculGjt}
A^{\delta,1,j}_t \geq - \chi \E \Big[\int_0^t  |D^{1,j}_u| T_u \dd u\Big],
\end{align}
with $T_u=\int_0^u  |\nabla F(\delta+u-s, R_{u,s}^{1,2})| \dd s= \int_0^u |\nabla F(\delta+s, R_{u,u-s}^{1,2})| \dd s $. By Step 2 again,
\begin{align*}
T_u=&2\alpha(\gamma-1)\int_0^u \frac{|R^{1,2}_{u,u-s}| \dd s}{(s+\delta +\alpha| R^{1,2}_{u,u-s}|^2)^{\gamma}}\leq 2\sqrt{\alpha}(\gamma-1)\int_0^u \frac{\dd s}{(s+\delta+\alpha| R^{1,2}_{u,u-s}|^2)^{\gamma-1/2}}.
\end{align*}
Apply Lemma~\ref{lemma:FI} with $a=\gamma-3/2 $, $b=\gamma-1$   and $f(s)= \delta+\alpha |R^{1,j}_{u,u-s}|^2$. It comes
\begin{align}
T_u\leq & 2\sqrt{\alpha}(\gamma-1)\kappa\Big(\gamma-\frac32,\gamma-1\Big)\Big(\int_0^u \frac{\dd s}{(s+\delta+\alpha| R^{1,2}_{u,u-s}|^2)^{\gamma}}\Big)^ {\frac{\gamma-3/2}{\gamma-1}} \notag \\
=&2\sqrt{\alpha}(\gamma-1)\kappa\Big(\gamma-\frac32,\gamma-1\Big)\Big(\bar S^{1,2,\delta}_u\Big)^ {\frac{\gamma-3/2}{\gamma-1}}. \label{estTu}
\end{align}
This last inequality, plugged together with \eqref{newS} in \eqref{eq:calculGjt}, gives us 
\begin{align*}
A_t^{\delta,1,j}\geq& -2\chi\sqrt{\alpha}(\gamma-1)\frac{\sqrt \theta C_0\big(\frac{4\alpha}\theta\big)\kappa\big(\frac{1}{2}, \gamma-1\big)\kappa\big(\gamma-\frac32,\gamma-1\big)}{4\pi}   \E \Big[\int_0^t   \Big(\bar S^{1,2,\delta}_u\Big)^ {\frac{\gamma-3/2}{\gamma-1}} (\bar S^{1,j,\e}_u)^{\frac{1}{2(\gamma-1)}}\dd u \Big]\\
=& -\chi C_2(\theta,\alpha,\gamma) \E \Big[\int_0^t   \Big(\bar S^{1,2,\delta}_u\Big)^ {\frac{\gamma-3/2}{\gamma-1}} (\bar S^{1,j,\e}_u)^{\frac{1}{2(\gamma-1)}}\dd u \Big]\\
\geq & -\chi C_2(\theta,\alpha,\gamma)\E \Big[\int_0^t   \Big(\bar S^{1,2,\delta}_u\Big)^ {\frac{\gamma-3/2}{\gamma-1}}
(\bar S^{1,j,\delta}_u)^{\frac{1}{2(\gamma-1)}}\dd u \Big]
\end{align*}
if $\delta \in (0,\e]$.
By the H\"older inequality (both for $\E$ and $\int_0^t$) with $p=\frac{\gamma-1}{\gamma-3/2}$ and $p'=2(\gamma-1)$,
\begin{align}
A_t^{\delta,1,j}\geq& -\chi C_2(\theta,\alpha,\gamma)
\Big(\E \Big[\int_0^t   \bar S^{1,2,\delta}_u\dd u\Big]\Big)^{\frac{\gamma-3/2}{\gamma-1}} \Big(\E\Big[\int_0^t \bar S^{1,j,\delta}_u\dd u \Big]\Big)^{\frac{1}{2(\gamma-1)}}\notag\\
=&-\chi C_2(\theta,\alpha,\gamma) \E \Big[\int_0^t   \bar S^{1,2,\delta}_u\dd u\Big] \label{eq:Gjt}
\end{align}
by exchangeability. Finally, recalling \eqref{estib} and \eqref{estTu}, for some constant
$A=A(c_0,p,\chi,\theta,\gamma,\alpha)$ that may change from line to line,
\begin{align*}
I^{\delta,4}_t \geq -A \E\Big[\int_0^t T_u \frac{\dd u}{u^{\frac12+\frac1p}}\Big]
\geq -A \E\Big[\int_0^t (\bar S^{1,2,\delta}_u)^{\frac{\gamma-3/2}{\gamma-1}} \frac{\dd u}{u^{\frac12+\frac1p}}\Big].
\end{align*}
Using the Young inequality with $p=\frac{\gamma-1}{\gamma-3/2}$ and $p'=2(\gamma-1)$,
we find that for any $\zeta>0$, there is a constant $A_\zeta=A_\zeta(c_0,p,\chi,\theta,\gamma,\alpha)$
such that
\begin{equation}\label{eq:4delta}
I^{\delta,4}_t \geq -\zeta\E\Big[\int_0^t \bar S^{1,2,\delta}_u\dd u\Big]
-A_\zeta \int_0^t \frac{\dd u}{u^{(\gamma-1)(1+\frac2p)}} \dd u \geq -\zeta\E\Big[\int_0^t \bar S^{1,2,\delta}_u\dd u\Big]
-A_\zeta t^{r_p},
\end{equation}
recall that $r_p=1-(\gamma-1)(1+\frac2p)>0$.

\medskip

Plugging \eqref{eq:a}, \eqref{eq:b}, \eqref{eq:e}, \eqref{eq:Gjt}, \eqref{eq:4delta} into \eqref{eq:abeg} we obtain, for any $\delta \in (0,\e]$, any $\zeta>0$, 
\begin{align*}
(C_1(\alpha,\gamma)-\chi C_2(\theta,\alpha,\gamma)-\zeta) \E \Big[\int_0^t   \bar S^{1,2,\delta}_u\dd u\Big] \leq &\E \Big[\int_0^t (\delta + \alpha |R^{1,2}_{s,s}|^2)^{1-\gamma} \dd s\Big]+A_\zeta t^{r_p}\\ 
\leq& \alpha^{1-\gamma}  \E \Big[\int_0^t |R^{1,2}_{s,s}|^{2(1-\gamma)} \dd s\Big]+A_\zeta t^{r_p} .
\end{align*}
Letting $\delta\to 0$, we find that if $\zeta\in (0,C_1(\alpha,\gamma)-\chi C_2(\theta,\alpha,\gamma))$,
$$
\E \Big[\int_0^t S^{1,2}_u\dd u\Big]
\leq \frac{\alpha^{1-\gamma}}{C_1(\alpha,\gamma)-\chi C_2(\theta,\alpha,\gamma)-\zeta}  \E \Big[\int_0^t |R^{1,2}_{s,s}|^{2(1-\gamma)} \dd s\Big]
+A_\zeta t^{r_p}.
$$
The conclusion immediately follows.
\end{proof}

We are now ready to conclude this section. We use here some ideas of \cite{FT}, which is natural since  in Proposition~\ref{lemma:LL}, we loosely showed that the singularity here is of the same order as in the parabolic-elliptic case. In particular, 
we borrow the functions $\phi$ and $\psi$ below.

\begin{proof}[Proof of Proposition~\ref{prop:mc}]
We set $\nu=4-2\gamma \in (0,1)$ and divide the proof in several steps.

\medskip

{\it Step 1.} We introduce the function $\phi(r):=(1+ r^\frac{\nu}{2})^{-1}r^\frac{\nu}{2}$
on $\R_+$ and set $\psi(x,y)=\phi(|x-y|^2)$ for $x,y\in \R^2$. As in \cite[Proof of Proposition~5]{FT}, it holds that
$$\nabla_x \psi (x,y)= \nu \frac{|x-y|^{\nu-2}}{(1+|x-y|^\nu)^2}(x-y)\quad \hbox{and}\quad \Delta_x \psi (x,y)= \nu^2 \frac{|x-y|^{\nu-2}}{(1+|x-y|^\nu)^2}\Big(1-2 \frac{|x-y|^{\nu}}{1+|x-y|^\nu}\Big).$$
For any $\eta >0$, there exists a constant $L_{\eta}=L_\eta(\gamma)>0$ (recall that $\nu=4-2\gamma$)
such that
$$
\Delta_x \psi (x,y) \geq (\nu^2-\eta) |x-y|^{\nu-2}-L_\eta
$$ 
for all $x,y\in\R^2$. To check this claim, it suffices to prove that the function
$$f_\eta(r)= \nu^2 \frac{r^{\nu-2}}{(1+r^\nu)^2}\Big(1-2\frac{r^{\nu}}{1+r^\nu}\Big)-(\nu^2-\eta)r^{\nu-2}
$$
is bounded from below (possibly by a negative constant) on $(0,\infty)$. This follows from the facts that $f_\eta$ is continuous on $(0,\infty)$ and that $\lim_{r\to 0}f_\eta(r)=+\infty$ and $\lim_{r\to +\infty}f_\eta(r)=0$.

\medskip

{\it Step 2.}
Applying the It\^o formula (as in the previous proof, one should first consider a smooth approximation of $\psi$, but we will not repeat this here), we obtain
\begin{align*}
\E[\psi (X^1_t&,X^2_t)]=\E[\psi (X^1_0,X^2_0)]  + \E\Big[\int_0^t [\Delta_x \psi (X^1_s,X^2_s)+\Delta_y\psi (X^1_s,X^2_s)] \dd s\Big] \\
&+ \E\Big[\int_0^t [\nabla_x \psi (X^1_s,X^2_s)\cdot \nabla b_{s+\e}^{c_0,\theta,\lambda}(X^1_s)+\nabla_y \psi (X^1_s,X^2_s)\cdot \nabla b_{s+\e}^{c_0,\theta,\lambda}(X^2_s)] \dd s\Big] \\
& + \frac{\chi}{N-1} \sum_{j\neq 1}\E\Big[\int_0^t  \nabla_x \psi (X^1_s,X^2_s) \cdot D^{1,j}_s\dd s  \Big]+ \frac{\chi}{N-1} \sum_{j\neq 2}\E\Big[\int_0^t \nabla_y\psi (X^1_s,X^2_s) \cdot D^{2,j}_s\dd s\Big].
\end{align*}
By symmetry of $\psi$ and exchangeability of the particle system, we have
\begin{equation}
\label{eq:plug}
J^1_t=J^1_0+ 2J^2_t + 2J^3_t+ \frac{2}{N-1} \sum_{j=2}^N B^{1,j}_t,
\end{equation}
where 
\begin{gather}
J^1_t:= \E[\psi (X^1_t,X^2_t)]\leq 1, \qquad\qquad
J^1_0:= \E[\psi (X^1_0,X^2_0)]\geq 0, \label{eq:aaa}
\end{gather}
and where
\begin{align*}    
J^2_t:=& \E\Big[\int_0^t \Delta_x \psi (X^1_s,X^2_s) \dd s\Big],\\
J^3_t:=&\chi\E\Big[\int_0^t \nabla_x \psi (X^1_s,X^2_s)\cdot \nabla b_{s+\e}^{c_0,\theta,\lambda}(X^1_s) \dd s \Big],\\
B^{1,j}_t:=&\chi\E\Big[\int_0^t  \nabla_x \psi (X^1_s,X^2_s)\cdot  D^{1,j}_s\dd s \Big].
\end{align*}
By Step 1, it holds that for any $\eta>0$,
\begin{equation}
    \label{eq:bbb}
    J^2_t\geq (\nu^2-\eta)\E\Big[\int_0^t |R^{1,2}_{s,s}|^{\nu-2} \dd s \Big]
    -L_{\eta} t= ((4-2\gamma)^2-\eta)\E\Big[\int_0^t |R^{1,2}_{s,s}|^{2(1-\gamma)} \dd s \Big]
    -L_{\eta} t.
\end{equation} 
Next, recalling \eqref{estib} and using that $|\nabla_x\psi(x,y)| \leq \nu |x-y|^{\nu-1}
=(4-2\gamma)|x-y|^{3-2\gamma}$, 
$$
|J^3_t| \leq L' \E\Big[\int_0^t |R^{1,2}_{s,s}|^{3-2\gamma} \frac{\dd s}{s^{\frac12+\frac1p}}\Big]
$$
for some constant $L'=L'(c_0,p,\chi,\theta,\gamma,\alpha)$. By the Young inequality with
$p=\frac{2(\gamma-1)}{2\gamma-3}$ and $p'=2(\gamma-1)$, we see that for all $\zeta>0$,
there is a constant $L'_\zeta=L'_\zeta(c_0,p,\chi,\theta,\gamma,\alpha)$ such that
\begin{equation}\label{ettac}
|J^3_t| \leq \zeta \E\Big[\int_0^t |R^{1,2}_{s,s}|^{2(1-\gamma)}\dd s\Big]
+L'_\zeta \int_0^t \frac{\dd s}{s^{(\gamma-1)(1+\frac2p)}} = \zeta \E\Big[\int_0^t |R^{1,2}_{s,s}|^{2(1-\gamma)}\dd s\Big]
+L'_\zeta t^{r_p},
\end{equation}
recall that $r_p=1-(\gamma-1)(1+\frac2p)>0$.
Finally, by \eqref{eq:borneDrift} and since $|\nabla_x\psi(x,y)|\leq(4-2\gamma)|x-y|^{3-2\gamma}$,
\begin{align*}
|B^{1,j}_t| 
\leq& \chi(4-2\gamma) \frac{\sqrt\theta C_0\big(\frac{4\alpha}{\theta}\big)\kappa\big(\frac{1}{2}, \gamma-1\big)}{4\pi} 
\E\Big[\int_0^t (S_s^{1,j})^{\frac 1{2(\gamma-1)}} |R^{1,2}_{s,s}|^{3-2\gamma} \dd s\Big].
\end{align*}
By the H\"older inequality (both for $\E$ and $\int_0^t$) with 
$p=2(\gamma-1)$ and $p'=\frac{2(\gamma-1)}{2\gamma-3}$,
\begin{align*}
|B^{1,j}_t| \leq& \chi  \frac{(4-2\gamma)\sqrt\theta C_0\big(\frac{4\alpha}{\theta}\big)\kappa\big(\frac{1}{2}, \gamma-1\big)}{4\pi} 
\Big(\E\Big[\int_0^t S_s^{1,j} \dd s \Big]\Big)^{\frac 1{2(\gamma-1)}} \Big(\E\Big[\int_0^t |R^{1,2}_{s,s}|^{2(1-\gamma)} \dd s\Big]\Big)^{\frac{2\gamma-3}{2(\gamma-1)}}\\
\leq & \chi  \frac{(4\!-\!2\gamma)\sqrt\theta C_0\big(\frac{4\alpha}{\theta}\big)\kappa\big(\frac{1}{2}, \gamma\!-\!1\big)}{4\pi} 
\Big(\frac{(1+\eta)\alpha^{1-\gamma}}{C_1(\alpha,\gamma) \!-\!\chi C_2(\theta,\alpha,\gamma)} \E\Big[\!\int_0^t \!|R_{u,u}^{1,2}|^{2(1-\gamma)}\dd u \Big] + A_\eta t^{r_p}   \Big)^{\frac 1{2(\gamma-1)}} \\
& \hskip7cm \times\Big(\E\Big[\int_0^t |R^{1,2}_{s,s}|^{2(1-\gamma)} \dd s\Big]\Big)^{\frac{2\gamma-3}{2(\gamma-1)}}
\end{align*}
for any $\eta>0$, by \eqref{eq:borneSt}
(with $A_\eta=A_\eta(c_0,p,\chi,\theta,\gamma,\alpha)$). 
Since $\frac1{2(\gamma-1)}<1$, allowing $A_\eta$ to change 
from line to line,
\begin{align*}
|B^{1,j}_t| \leq &\chi(1+\eta)^{\frac1{2(\gamma-1)}}\frac{(4-2\gamma)  \sqrt\theta C_0\big(\frac{4\alpha}{\theta}\big) \kappa\big(\frac{1}{2}, \gamma-1\big)}{4\pi\sqrt{\alpha}[C_1(\alpha,\gamma) -\chi C_2(\theta,\alpha,\gamma)]^{\frac 1 {2(\gamma-1)}}} 
\E\Big[\int_0^t |R^{1,2}_{s,s}|^{2(1-\gamma)} \dd s \Big]\\
&+A_\eta t^{\frac{r_p}{2(\gamma-1)}}\Big(\E\Big[\int_0^t |R^{1,2}_{s,s}|^{2(1-\gamma)} \dd s\Big]\Big)^{\frac{2\gamma-3}{2(\gamma-1)}}.
\end{align*}
We easily deduce, again by the Young inequality with
$p=2(\gamma-1)$ and $p'=\frac{2(\gamma-1)}{2\gamma-3}$, that for all $\zeta>0$,
there is a constant $L''_\zeta=L''_\zeta(c_0,\chi,\theta,\gamma,\alpha)$ such that
\begin{equation}\label{eq:ccc}
|B^{1,j}_t| \leq \Big(\chi\frac{(4-2\gamma)\sqrt\theta C_0\big(\frac{4\alpha}{\theta}\big)\kappa\big(\frac{1}{2}, \gamma-1\big)}{4\pi\sqrt\alpha[C_1(\alpha,\gamma) -\chi C_2(\theta,\alpha,\gamma)]^{\frac1{2(\gamma-1)}}}+\zeta\Big) 
\E\Big[\int_0^t |R^{1,2}_{s,s}|^{2(1-\gamma)} \dd s \Big]
+L''_\zeta t^{r_p}.
\end{equation}
Plugging \eqref{eq:aaa}, \eqref{eq:bbb}, \eqref{ettac} and \eqref{eq:ccc} in \eqref{eq:plug}, we have proved that for all $\eta>0$, all $\zeta>0$,
\begin{align*}1\geq &\Big( 2(4-2\gamma)^2-2\chi \frac{(4-2\gamma)  \sqrt\theta C_0\big(\frac{4\alpha}{\theta}\big)\kappa\Big(\frac{1}{2}, \gamma-1\Big)}{4\pi\sqrt\alpha[C_1(\alpha,\gamma) -\chi C_2(\theta,\alpha,\gamma)]^{\frac1{2(\gamma-1)}}}-2\eta-4\zeta\Big) \E\Big[\int_0^t |R^{1,2}_{s,s}|^{2(1-\gamma)} \dd s \Big]\\
&- 2 L_{\eta} t-2(L'_\zeta+L''_\zeta)t^{r_p}.
\end{align*}
By assumption, it is possible to find $\eta>0$ and $\zeta>0$ small enough
so that the constant in front of the first term of the right hand side is positive. Since this constant does not depend on $N$ nor on $\varepsilon$,
which is also the case of $L_{\eta}$, $L'_\zeta$ and $L''_\zeta$, 
this ends the proof of~\eqref{eq:Prop21-1}.

\medskip

Next, recalling that $S^{1,j}_t$ was defined in the statement of Proposition~\ref{lemma:LL}, we have
$$
\E\Big[\int_0^t \!\int_0^s \!\frac{1}{(s-u+|X^{1}_s-X^2_u|^2)^\gamma}\dd u\dd s\Big]\leq
\E\Big[\int_0^t \!\int_0^s \!\frac{1}{(s-u+\alpha|X^{1}_s-X^2_u|^2)^\gamma}\dd u\dd s\Big]=
\E\Big[\int_0^t S^{1,2}_s \dd s\Big],
$$
because $\alpha\in (0,1)$ (since $C_1(\alpha,\gamma)>0$ implies that 
$\alpha<\frac1{4(\gamma-1)}<\frac12$). Hence 
\eqref{eq:Prop21-2} directly follows from \eqref{eq:borneSt} (with e.g. $\eta=1$) and \eqref{eq:Prop21-1}. 

\medskip

By Remark \ref{ttt} with $\alpha=1$, \eqref{eq:Prop21-3prime} immediately follows from \eqref{eq:Prop21-2} and
\begin{align*}
&\E \Big[\int_0^t \Big| \int_0^s \nabla K^{\theta,\lambda}_{s-u}(X_s^{1}-X_u^{2}) \dd u \Big|^{2(\gamma-1)}  \dd s\Big ]\\
\leq&  \Big[\frac{\sqrt\theta C_0\big(\frac{4}{\theta}\big)}{4\pi}\Big]^{2(\gamma-1)}
\E \Big[\int_0^t \Big( \int_0^s   \frac{1}{(s-u+ |R^{1,2}_{s,u}|^2)^{\frac{3}{2}}}\dd u\Big)^{2(\gamma-1)}  \dd s\Big]\\
\leq & \Big[\frac{\sqrt\theta C_0\big(\frac{4}{\theta}\big)}{4\pi}\kappa\Big(\frac12,\gamma-1\Big)\Big]^{2(\gamma-1)}
\E \Big[\int_0^t  \int_0^s   \frac{1}{(s-u+ |R^{1,2}_{s,u}|^2)^{\gamma}}\dd u  \dd s\Big]
\end{align*}
by Lemma~\ref{lemma:FI} with $a=\frac12$ and $b=\gamma-1$. Hence \eqref{eq:Prop21-3} also follows from \eqref{eq:Prop21-2}.
\end{proof}

\section{Tightness}
\label{sec:tight}

Here we prove the tightness in $N\geq 2$ and $\e \in (0,1]$ of the smoothed particle system.
We closely follow \cite[Lemma~11]{fournier-jourdain}, although some additional moment conditions were assumed there.

\begin{lemma}\label{tight}
Consider some nonnegative $c_0 \in L^p(\R^2)$ for some $p>2$.
Let $\gamma\in (\frac32,\frac{2p+2}{p+2})$, $\alpha> 0$, $\chi>0$ and $ \theta >0$ satisfy the conditions of Proposition~\ref{prop:mc}. For each $N\geq 2$, each $\varepsilon\in(0,1]$, consider the unique solution $(X^{i,N,\e}_t)_{t\in [0,\infty),i=1,\dots,N}$
to \eqref{eq:PSS} with some exchangeable initial
condition $(X^{i,N}_0)_{i=1,\dots,N}$.

\medskip

(i) For $N \geq 2$ fixed, 
the family $((X^{1,N,\e}_t)_{t\geq 0}, \e\in(0,1])$ is tight 
in $C([0,\infty),\R^2)$. 

\medskip

(ii) If $(X^{1,N}_0)_{N\geq 2}$ is tight in $\R^2$, then 
$((X^{1,N,\e}_t)_{t\geq 0}, N\geq 2, \e\in(0,1])$ 
is tight in $C([0,\infty),\R^2)$.
\end{lemma}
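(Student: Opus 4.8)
The plan is to write, for each $i$,
$$X^{i,N,\e}_t = X^{i,N}_0 + \sqrt2\, W^i_t + A^{i,N,\e}_t + B^{i,N,\e}_t,\qquad A^{i,N,\e}_t:=\chi\int_0^t \nabla b^{c_0,\theta,\lambda}_{s+\e}(X^{i,N,\e}_s)\,\dd s,\quad B^{i,N,\e}_t:=\frac{\chi}{N-1}\sum_{j\neq i}\int_0^t D^{i,j}_s\,\dd s,$$
and to verify the classical tightness criterion in $C([0,\infty),\R^2)$: it suffices to argue on $C([0,T],\R^2)$ for each fixed $T>0$ (and then diagonalise over $T\in\N$), establishing (a) tightness of the initial positions and (b) that for all $\eta>0$, $\lim_{\delta\to0}\limsup_{N,\e}\P\big(\sup_{s,t\le T,\,|t-s|\le\delta}|X^{1,N,\e}_t-X^{1,N,\e}_s|>\eta\big)=0$; by the triangle inequality it is enough to check (a)--(b) for each of the four summands. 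For (a): in case (i) the law of $X^{1,N}_0$ is a single measure, hence tight, while in case (ii) it is the hypothesis. The piece $\sqrt2\,W^1$ has the fixed law of a planar Brownian motion, for which (a)--(b) are classical.

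For $A^{1,N,\e}$, I would only use the pointwise bound from \eqref{estib}: $|\nabla b^{c_0,\theta,\lambda}_{s+\e}(x)|\le A(\theta,p)(s+\e)^{-\frac12-\frac1p}\|c_0\|_{L^p}\le A(\theta,p)\,s^{-\frac12-\frac1p}\|c_0\|_{L^p}$, uniformly in $x\in\R^2$ and $\e\in(0,1]$. Since $\frac12+\frac1p<1$ (because $p>2$), one gets the \emph{deterministic} modulus estimate $\sup_{s,t\le T,\,|t-s|\le\delta}|A^{1,N,\e}_t-A^{1,N,\e}_s|\le C\,\delta^{\frac12-\frac1p}$, so the paths $A^{1,N,\e}$ are a.s.\ equi-H\"older with a deterministic constant (and start at $0$), whence (a)--(b) are immediate by Arzel\`a--Ascoli. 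The substance is the interaction term $B^{1,N,\e}$. Set $q:=2(\gamma-1)$, so $q>1$ since $\gamma>\frac32$. First, $|H^{\theta,\lambda,\e}_t(x)|\le|\nabla K^{\theta,\lambda}_t(x)|$ because $t^2/(t+\e)^2\le1$, hence $|D^{1,2}_s|\le\int_0^s|\nabla K^{\theta,\lambda}_{s-u}(X^{1,N,\e}_s-X^{2,N,\e}_u)|\,\dd u$, and \eqref{eq:Prop21-3} in Proposition~\ref{prop:mc} gives $\sup_{\e\in(0,1],N\ge2}\E[\int_0^T|D^{1,2}_u|^{q}\,\dd u]<\infty$. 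Writing $\phi^{N,\e}_u:=\big|\tfrac{\chi}{N-1}\sum_{j\neq1}D^{1,j}_u\big|$, convexity of $r\mapsto r^q$ ($q\ge1$) together with exchangeability yields $\E[\int_0^T(\phi^{N,\e}_u)^q\,\dd u]\le\chi^q\,\E[\int_0^T|D^{1,2}_u|^q\,\dd u]=:C_T<\infty$, uniformly in $N\ge2$ and $\e\in(0,1]$. For $s\le t\le T$ with $t-s\le\delta$, H\"older's inequality gives $|B^{1,N,\e}_t-B^{1,N,\e}_s|\le\int_s^t\phi^{N,\e}_u\,\dd u\le\delta^{1-\frac1q}\big(\int_0^T(\phi^{N,\e}_u)^q\,\dd u\big)^{1/q}$, so
$$\P\Big(\sup_{s,t\le T,\,|t-s|\le\delta}|B^{1,N,\e}_t-B^{1,N,\e}_s|>\eta\Big)\le\P\Big(\int_0^T(\phi^{N,\e}_u)^q\,\dd u>\eta^q\delta^{-(q-1)}\Big)\le\frac{C_T}{\eta^q}\,\delta^{q-1}\longrightarrow0$$
as $\delta\to0$, uniformly in $N,\e$; and $B^{1,N,\e}_0=0$. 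Combining the four contributions and using that a finite Minkowski sum of compact subsets of $C([0,T],\R^2)$ is compact gives tightness on $[0,T]$, and a diagonal extraction over $T\in\N$ gives tightness in $C([0,\infty),\R^2)$; the only change between (i) and (ii) is the treatment of the initial positions described above.

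The only genuinely non-elementary input is the uniform-in-$(N,\e)$ control of $\E[\int_0^T|D^{1,2}_u|^{2(\gamma-1)}\,\dd u]$ with $2(\gamma-1)>1$, which is exactly \eqref{eq:Prop21-3}, already proved in Proposition~\ref{prop:mc}; the passage from this two-particle estimate to the full $N$-particle average costs nothing, thanks to Jensen's inequality and exchangeability. Everything else — the Brownian contribution, the deterministic H\"older bound on the $b$-drift from \eqref{estib}, and the reduction from $[0,\infty)$ to compact time intervals — is routine, so I do not anticipate any real obstacle beyond correctly invoking the a priori estimate.
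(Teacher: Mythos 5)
Your proof is correct and follows essentially the same route as the paper: decompose the process into the initial condition, the Brownian part, the $\nabla b$-drift (handled by the deterministic H\"older bound from \eqref{estib}), and the interaction drift, with the key input being the uniform bound \eqref{eq:Prop21-3} on $\E[\int_0^T|D^{1,2}_u|^{2(\gamma-1)}\,\dd u]$ obtained via $|H^{\theta,\lambda,\e}|\leq|\nabla K^{\theta,\lambda}|$. The only cosmetic difference is that you apply Jensen over $j$ before H\"older in time (so Markov applies to the $L^q$ mass directly), whereas the paper applies H\"older in time first and then uses $x^{1/q}\leq 1+x$ to form a random H\"older constant $Z^{N,\e}_T$; both give the same uniform modulus estimate.
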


\begin{proof} 
We start with (ii).
The space $C([0,\infty),\R^2)$ being endowed with the uniform convergence on compact time intervals, we only have to check that
$((X^{1,N,\e}_t)_{t\in [0,T]}, N\geq 2, \e\in(0,1] )$
is tight in $C([0,T],\R^2)$ for any $T>0$.
By definition, $X^{1,N,\e}_t=X^{1,N}_0 + \sqrt 2 W^1_t + 
\chi G^{1,N,\e}_t+ \chi \Gamma^{1,N,\e}_t$, where
\begin{gather*}
G^{1,N,\e}_t:= \int_0^t \nabla b^{c_0,\theta,\lambda}_{s+\e}(X^{1,N,\e}_s)\dd s ,\\
\Gamma^{1,N,\e}_t:=\frac{1}{N-1} \sum_{j=2}^N \int_0^t D^{1,j,N,\e}_s \dd s, \quad \hbox{with}\quad
D^{1,j,N,\e}_s :=\int_0^s H^{\theta,\lambda, \varepsilon}_{s-u} ( X^{1,N,\varepsilon}_s-X^{j,N,\varepsilon}_u)  \dd u.
\end{gather*}
The family $(X^{1,N}_0)_{N\geq 2}$ is tight by hypothesis and $(W^1_t)_{t\in[0,T]}$ does not depend on $N\geq 2$. The family 
$((G^{1,N,\e}_t)_{t\in [0,T]}, N\geq 2, \e\in(0,1])$ is tight because
by \eqref{estib}, it a.s. takes values in the set $\Kk$ of 
functions $x:[0,T]\mapsto \R^2$ such that $x(0)=0$ and for all $0\leq s < t \leq T$, $|x(t)-x(s)|\leq A |t-s|^{\frac12-\frac1p}$
(for some constant $A=A(c_0,p,T)$), and because $\Kk$ is compact in $C([0,T],\R^2)$ by Ascoli's theorem.

\medskip
It remains to  prove that the family $((\Gamma^{1,N,\e}_t)_{t\in [0,T]}, N\geq 2, \e\in(0,1])$ is tight in
$C([0,T],\R^2)$. 
Let $0\leq s < t \leq T$. We use H\"older's inequality with $p=\frac{2(\gamma-1)}{2\gamma-3}$
and $p'=2(\gamma-1)$ to get
\begin{align*}
\Big |\Gamma^{1,N,\e}_t-\Gamma^{1,N,\e}_s|  \leq  \frac{\chi}{N-1} \sum_{j=2}^N \int_s^t |D^{1,j,N,\e}_u| \dd u 
\leq |t-s|^{\frac{2\gamma-3}{2(\gamma-1)}} \frac{\chi}{ N-1} \sum_{j=2}^N \Big(\int_s^t |D^{1,j,N,\e}_u|^{2(\gamma-1)} \dd u\Big)^{\frac1{2(\gamma -1)}}. 
\end{align*}
Setting $\beta=\frac{2\gamma-3}{2(\gamma-1)}>0$ and using that $x^{\frac 1 {2(\gamma-1)}}\leq 1 + x$ (because $2(\gamma-1)>1$), we conclude that
\begin{align*}
\Big |\Gamma^{1,N,\e}_t-\Gamma^{1,N,\e}_s|  \leq Z_T^{N,\e} |t-s|^{\beta},
\quad \hbox{where}\quad  Z_T^{N,\e}:=\frac\chi {N-1} \sum_{j=2}^N \Big[1+
\int_0^T  |D^{1,j,N,\e}_u|^{2(\gamma-1)} \dd u\Big].
\end{align*}
Since $|H^{\theta,\lambda,\e}_s(x)|\leq |\nabla K^{\theta,\lambda}_s(x)|$,
we have by \eqref{eq:Prop21-3} and exchangeability
$$
C_T:=\!\!\!\!\sup_{\e\in(0,1],N\geq 2} \E[Z^{N,\e}_T] \leq
\chi \sup_{\e\in(0,1],N\geq 2} \E\Big[1+\int_0^T \Big(\int_0^u |\nabla K^{\theta,\lambda}_{u-s}(X^{1,N,\varepsilon}_s-X^{j,N,\varepsilon}_u)| \dd s \Big)^{2(\gamma-1)}\dd u\Big]
< \infty.
$$
Now, let $\Kk'_{M}$  the set of functions $x:[0,T]\mapsto \R^2$ such that $x(0)=0$ and for all $0\leq s < t \leq T$, $|x(t)-x(s)|\leq M |t-s|^\beta$.
For all $\e\in(0,1]$, all $N\geq2$ and all $M>0$, 
$$
\P((\Gamma^{1,N,\e}_t)_{t\in [0,T]} \notin
\Kk'_{M})\leq \P(Z^{N,\e}_T > M) \leq \frac{C_T}M.
$$ 
Since $\Kk'_{M}$ is compact in $C([0,T],\R^2)$ by Ascoli's theorem,
the proof of (ii) is complete. The proof of (i) is the same,
but we do not need the tightness of the family
$(X^{1,N}_0)_{N\geq 2}$ since $N\geq 2$ is fixed.
\end{proof}
\section{Existence of the particle system}
\label{sec:ex}

Here we show the existence of the particle system without cutoff. We follow the ideas of \cite[Theorem~5]{fournier-jourdain} and combine them with our results from Section~\ref{sec:main}.

\begin{proposition}\label{prop:ex}
Consider some nonnegative $c_0 \in L^p(\R^2)$ for some $p>2$. Let $\gamma\in(\frac{3}{2}, \frac{2p+2}{p+2})$, $\alpha>0$, $\chi>0$ and $\theta>0$ satisfy the conditions of 
Proposition~\ref{prop:mc}. Fix $N\geq 2$ and consider
some exchangeable initial condition $(X_0^{i,N})_{i=1,\dots,N}$.
There exists a (weak) solution $(X^{i,N}_t)_{t\geq 0,i=1,\dots,N}$  to \eqref{def:PS}. Moreover, the family $((X^{i,N}_t)_{t\geq 0},i=1,\dots,N)$ 
is exchangeable, and for all $t>0$,
\begin{gather}
\label{eq:Prop31-1}
    \sup_{N\geq 2} \E \Big[\int_0^t  \frac{1}{|X_s^{1,N}-X_s^{2,N}|^{2(\gamma-1)}} \dd s\Big ]<\infty,\\
    \label{eq:Prop31-2}
    \sup_{N\geq 2} \E \Big[\int_0^t  \int_0^s \frac{1}{(s-u+ |X_s^{1,N}-X_u^{2,N}|^{2})^\gamma} \dd u \dd s \Big ]<\infty,\\
    \label{eq:Prop31-3prime}
    \sup_{ N\geq 2} \E \Big[\int_0^t \int_0^s |\nabla K^{\theta,\lambda}_{s-u}(X_s^{1,N}-X_u^{2,N})|^{\frac{2\gamma} 3} \dd u   \dd s\Big ]<\infty,\\
    \label{eq:Prop31-3}
     \sup_{N\geq 2} \E \Big[\int_0^t \Big( \int_0^s |\nabla K^{\theta,\lambda}_{s-u}(X_s^{1,N}-X_u^{2,N})| \dd u \Big)^{2(\gamma-1)}  \dd s\Big ]<\infty.
\end{gather}
\end{proposition}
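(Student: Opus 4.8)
The plan is to obtain the non-smoothed particle system as a limit of the $\e$-smoothed systems from Section~\ref{sec:main}, using the tightness of Lemma~\ref{tight} together with the uniform estimates of Proposition~\ref{prop:mc}. First I would fix $N\geq 2$ and the exchangeable initial condition $(X_0^{i,N})_{i=1,\dots,N}$, and consider for each $\e\in(0,1]$ the pathwise unique solution $(X^{i,N,\e}_t)_{t\geq 0,i=1,\dots,N}$ to \eqref{eq:PSS}, driven by the \emph{same} Brownian motions and initial condition. By Lemma~\ref{tight}(i) (applied coordinate-by-coordinate, i.e. to the $\R^{2N}$-valued process, which works since the argument there only used exchangeability and the estimate \eqref{eq:Prop21-3}), the family of laws of $\big((X^{i,N,\e}_t)_{t\geq 0,i=1,\dots,N},(W^i_t)_{t\geq 0,i=1,\dots,N}\big)$, indexed by $\e\in(0,1]$, is tight in $C([0,\infty),\R^{2N}\times\R^{2N})$. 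I would extract a subsequence $\e_k\to 0$ along which this law converges, and invoke the Skorokhod representation theorem to realise the convergence almost surely on a common probability space: there exist random variables $\big((\tilde X^{i,N,\e_k}_t)_{i,t},(\tilde W^{i,\e_k}_t)_{i,t}\big)$ with the law of the $\e_k$-system, converging a.s.\ in $C([0,\infty),\R^{2N}\times\R^{2N})$ to some limit $\big((X^{i,N}_t)_{i,t},(W^i_t)_{i,t}\big)$; the limiting $(W^i)_i$ is still a family of independent $2D$-Brownian motions adapted to the natural filtration, and $(X^{i,N}_0)_i$ retains the prescribed exchangeable law. Exchangeability of the limit process is inherited from the exchangeability (in $i$) of each $\e_k$-system.

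The core of the argument is then to pass to the limit in the equation \eqref{eq:PSS}. The linear terms $X_0^{i,N}+\sqrt2 W^i_t$ converge trivially, and the drift term $\chi\int_0^t \nabla b^{c_0,\theta,\lambda}_{s+\e_k}(\tilde X^{i,N,\e_k}_s)\,\dd s$ converges to $\chi\int_0^t \nabla b^{c_0,\theta,\lambda}_s(X^{i,N}_s)\,\dd s$ using \eqref{estib}, the a.s.\ uniform convergence of paths, the continuity of $\nabla b_s^{c_0,\theta,\lambda}$, and dominated convergence (the domination $|\nabla b^{c_0,\theta,\lambda}_{s+\e}(x)|\leq A(s+\e)^{-1/2-1/p}\leq A s^{-1/2-1/p}$ is integrable near $0$ since $p>2$). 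The real work is the interaction term $\frac{\chi}{N-1}\sum_{j\neq i}\int_0^t \int_0^s H^{\theta,\lambda,\e_k}_{s-u}(\tilde X^{i,N,\e_k}_s-\tilde X^{j,N,\e_k}_u)\,\dd u\,\dd s$, which must converge to $\frac{\chi}{N-1}\sum_{j\neq i}\int_0^t\int_0^s \nabla K^{\theta,\lambda}_{s-u}(X^{i,N}_s-X^{j,N}_u)\,\dd u\,\dd s$. I expect this to be \textbf{the main obstacle}, because $\nabla K^{\theta,\lambda}_{s-u}(x)$ is singular exactly at $(s-u,x)=(0,0)$, i.e.\ precisely when particles $i$ and $j$ meet, and one must rule out any contribution concentrating there in the limit. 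The strategy: away from the diagonal the integrand $H^{\theta,\lambda,\e_k}_{s-u}(\cdot)\to\nabla K^{\theta,\lambda}_{s-u}(\cdot)$ pointwise and the convergence of paths gives convergence of the double integral over the region $\{|X^{i,N}_s-X^{j,N}_u|\geq\delta\}\cap\{s-u\geq\delta\}$; it then remains to bound, uniformly in $k$, the contribution of the complementary region and show it vanishes as $\delta\to 0$. For this I would use Fatou's lemma together with the uniform estimate \eqref{eq:Prop21-3prime}: since $\frac{2\gamma}{3}>1$, the quantity $\int_0^t\int_0^s|\nabla K^{\theta,\lambda}_{s-u}(X^{i,N}_s-X^{j,N}_u)|^{\frac{2\gamma}{3}}\,\dd u\,\dd s$ is finite (in expectation, hence a.s.), which both gives the required uniform integrability of $|\nabla K^{\theta,\lambda}_{s-u}(\cdot)|$ on $\{s-u\in(0,t)\}$ and implies the a.s.\ validity of \eqref{PScond} for the limit process (taking $i=j$ after an exchangeability argument, or directly from the analogue of \eqref{eq:Prop21-3prime} with $j$ replaced by $i$, which also holds uniformly by the same proof).

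Once the convergence of all four terms is established, the limit $(X^{i,N}_t)_{t\geq 0,i=1,\dots,N}$ satisfies the defining equation of Definition~\ref{def:PS} together with \eqref{PScond}, so it is a weak $N$-Keller-Segel particle system; I would record that it is defined on a probability space supporting the i.i.d.\ Brownian family $(W^i)_i$ and the exchangeable $(X_0^{i,N})_i$ independent of them, so it is a genuine (weak) solution in the sense of Definition~\ref{def:PS}. Finally, the four estimates \eqref{eq:Prop31-1}--\eqref{eq:Prop31-3} follow from their $\e$-uniform counterparts \eqref{eq:Prop21-1}--\eqref{eq:Prop21-3} by Fatou's lemma: each integrand is (a.s., along the subsequence) a lower-semicontinuous functional of the path $\big((X^{i,N,\e_k}_t)_{i,t}\big)$ — for \eqref{eq:Prop31-1} this is clear since $x\mapsto|x|^{-2(\gamma-1)}$ is lower-semicontinuous and one integrates over $s$; for \eqref{eq:Prop31-2} the integrand $(s-u+|x-y|^2)^{-\gamma}$ is continuous in the relevant variables so one applies Fatou directly in $(u,s,\omega)$; for \eqref{eq:Prop31-3prime} and \eqref{eq:Prop31-3}, $|\nabla K^{\theta,\lambda}_{s-u}(x)|$ is continuous for $s-u>0$ and one again applies Fatou. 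In each case $\E[\,\cdot\,(\text{limit})]\leq\liminf_k\E[\,\cdot\,(\e_k\text{-system})]\leq\sup_{\e,N}\E[\,\cdot\,]<\infty$, and taking the supremum over $N\geq 2$ (legitimate since all bounds from Proposition~\ref{prop:mc} are $N$-uniform) yields the claim.
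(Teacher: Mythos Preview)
Your plan is correct and follows the paper's proof almost exactly: tightness of the $\e$-systems via Lemma~\ref{tight}(i), Skorokhod representation along a subsequence $\e_k\to 0$, passage to the limit term by term in \eqref{eq:PSS}, and Fatou for \eqref{eq:Prop31-1}--\eqref{eq:Prop31-3}. The only difference worth noting concerns the singular interaction term. The paper introduces an auxiliary parameter $\eta\in(0,1]$ and writes
\[
|Z^{i,j,N,\e_k}_t-Z^{i,j,N}_t|\le I^{i,j}_{1,k,\eta,t}+I^{i,j}_{2,k,\eta,t}+I^{i,j}_{3,\eta,t},
\]
where the middle piece (with the bounded kernel $H^{\theta,\lambda,\eta}$) goes to $0$ by dominated convergence for fixed $\eta$, and the two outer pieces are controlled by H\"older (exponent $\tfrac{2\gamma}3$) against the factor $\big(1-\tfrac{(s-u)^2}{(s-u+\eta)^2}\big)$, using exactly \eqref{eq:Prop21-3prime} and \eqref{eq:Prop31-3prime}; one then lets $\eta\to 0$. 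Your near/far decomposition with parameter $\delta$ is a legitimate alternative---what you are really invoking is Vitali's theorem, since the uniform $L^{\frac{2\gamma}3}$ bound gives uniform integrability of $|H^{\theta,\lambda,\e_k}_{s-u}(\cdot)|\le |\nabla K^{\theta,\lambda}_{s-u}(\cdot)|$ in $L^1(\dd u\,\dd s\,\dd\P)$ and pointwise a.e.\ convergence is clear. One small caveat: your ``bad'' region $\{|X^{i,N}_s-X^{j,N}_u|<\delta\}\cup\{s-u<\delta\}$ need not have vanishing Lebesgue measure as $\delta\to 0$, so if you want a truncation argument rather than Vitali you should drop the first set (on $\{s-u\ge\delta\}$ the kernel is globally bounded regardless of the spatial argument). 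Finally, your parenthetical about ``$j$ replaced by $i$'' in \eqref{PScond} stems from a typo in the paper; the condition there is for $i\neq j$ and follows from \eqref{eq:Prop31-3prime} by exchangeability and H\"older, exactly as the paper does.
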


\begin{proof} The only difference w.r.t. the proof of \cite[Theorem~5]{fournier-jourdain} lies in the last step.

\medskip

{\it Step 1.} For each $\e\in(0,1]$, let  $(X^{i,N,\e}_t)_{t\in [0,\infty),i=1,\dots,N}$ solve
to \eqref{eq:PSS}. By Lemma~\ref{tight}-(i), we know that the family
$((X^{1,N,\e}_t)_{t\geq 0}, \e\in(0,1])$ is tight in $C([0,\infty),\R^2)$. By exchangeability, the family
$((X^{1,N,\e}_t,\dots,X^{N,N,\e}_t)_{t\geq 0},  \e\in(0,1])$ is tight in $C([0,\infty),(\R^2)^N)$ and
consequently, the family $(((X^{1,N,\e}_t,W^{1}_t),\dots,(X^{N,N,\e}_t,W^N_t))_{t\geq 0},  \e\in(0,1])$ is tight in 
$C([0,\infty),(\R^2\times\R^2)^{N})$.
Hence, there exists a decreasing sequence $\e_k \to 0$ such that
$((X^{1,N,\e_k}_t,W^{1}_t),\dots,(X^{N,N,\e_k}_t,W^N_t))_{t\geq 0}$ converges in law in
$C([0,\infty),(\R^2\times\R^2)^{N})$ as $k \to \infty$. Applying the Skorokhod representation theorem, we can find, for each $k\geq 1$,
a solution $(\tX^{1,N,\e_k}_t,\dots,\tX^{N,N,\e_k}_t)_{t\geq 0}$ to \eqref{eq:PSS}, associated to some Brownian
motions $(\tW^{1,N,\e_k}_t,\dots,\tW^{N,N,\e_k}_t)_{t\geq 0}$, in such a way that 
$((\tX^{1,N,\e_k}_t,\tW^{1,N,\e_k}_t),\dots,(\tX^{N,N,\e_k}_t,\tW^{N,N,\e_k}_t))_{t\geq 0}$ a.s. goes to some limit
$((X^{1,N}_t,W^{1}_t),\dots,(X^{N,N}_t,W^{N}_t))_{t\geq 0}$, as $k\to \infty$, in $C([0,\infty),(\R^2\times\R^2)^{N})$.
Of course, $((X^{i,N}_t)_{t\geq 0}, i=1,\dots,N)$ is exchangeable and we deduce \eqref{eq:Prop31-1}-\eqref{eq:Prop31-3}
from \eqref{eq:Prop21-1}-\eqref{eq:Prop21-3} and the Fatou Lemma.
Observe that \eqref{PScond} follows from \eqref{eq:Prop31-3prime} (by exchangeability) since $\gamma>3/2$.

\medskip

{\it Step 2.} We introduce $\Ff_t=\sigma((X_s^{i,N},W^{i}_s)_{i=1,\dots,N,s\in[0,t]})$.
Of course, $(X^{i,N}_t)_{i=1,\dots,N,t\geq 0}$ is $(\Ff_t)_{t\geq 0}$-adapted.  
Exactly as in Step 2 of the proof of \cite[Theorem~5]{fournier-jourdain}, one can show that
$(W^{i}_t)_{i=1,\dots,N,t\geq 0}$ is a $2N$-dimensional $(\Ff_t)_{t\geq 0}$-Brownian motion.

\medskip

{\it Step 3.}
It only remains to check that for each $i\in \{1,\dots,N\}$, each $t\geq 0$,
$$
X^{i,N}_t=X^{i,N}_0 + \sqrt 2 W^{i}_t + \chi Y^{i,N}_t+ \frac \chi {N-1} \sum_{j\neq i} Z^{i,j,N}_t,$$
where
$$
Y^{i,N}_t=\int_0^t \nabla b^{c_0,\theta,\lambda}_s(X^{i,N}_s)\dd s
\quad \hbox{and}\quad Z^{i,j,N}_t=
\int_0^t \int_0^s \nabla K_{s-u}^{\theta,\lambda}(X^{i,N}_s-X^{j,N}_u) \dd u \dd s.
$$
We start from 
$\tX^{i,N,\e_k}_t=\tX^{i,N,\e_k}_0 + \sqrt 2 \tW^{i,N,\e_k}_t + \chi Y^{i,N,\e_k}_t
+ \frac\chi {N-1} \sum_{j\neq i} Z^{i,j,N,\e_k}_t$,
where 
$$Y^{i,j,N,\e}_t=\int_0^t \nabla b^{c_0,\theta,\lambda}_{s+\e}(\tX^{i,N,\e_k}_s)\dd s
\quad \hbox{and}\quad Z^{i,j,N,\e_k}_t=
\int_0^t \int_0^s H^{\theta,\lambda,\e}_{s-u}(\tX^{i,N,\e}_s-\tX^{j,N,\e}_u) \dd u
$$
and pass to the limit as $k\to \infty$, e.g. in probability. 
Of course, $(\tX^{i,N,\e_k}_t,\tX^{i,N,\e_k}_0,\tW^{i,N,\e_k}_t)$ a.s. tends to $(X^{i,N}_t,X^{i,N}_0,W^i_t)$ by construction, 
and $Y^{i,N,\e_k}_t$ a.s. tends to $Y^{i,N}_t$ by dominated convergence,
recalling \eqref{estib} and noting that a.s., $b^{c_0,\theta,\lambda}_{s+\e}(\tX^{i,N,\e_k}_s)$ tends to $b^{c_0,\theta,\lambda}_{s}(X^{i,N}_s)$ for all $s>0$
(because $(s,x)\mapsto \nabla b_s^{c_0,\theta,\lambda}(x)$ is continuous on $(0,\infty)\times\R^2$).

\medskip
It remains to show that $Z_t^{i,j, N, \e_k}\to  Z_t^{i,j, N}$ in probability as $k\to\infty$. We fix $\eta>0$ and decompose
\begin{align*}
    |Z_t^{i,j, N, \e_k}- Z_t^{i,j, N}|\leq & \int_0^t \int_0^s |H_{s-u}^{\theta,\lambda,\e_k} (\tX_s^{i,N,\e_k}-\tX_u^{j,N,\e_k})- H_{s-u}^{\theta,\lambda,\eta} (\tX_s^{i,N,\e_k}-\tX_u^{j,N,\e_k}) |\dd u \dd s \\
    & +\int_0^t \int_0^s |H_{s-u}^{\theta,\lambda,\eta} (\tX_s^{i,N,\e_k}-\tX_u^{j,N,\e_k})- H_{s-u}^{\theta,\lambda,\eta} (X_s^{i,N}-X_u^{j,N}) |\dd u \dd s \\
    &+\int_0^t \int_0^s |H_{s-u}^{\theta,\lambda,\eta} (X_s^{i,N}-\tX_u^{j,N})- \nabla K_{s-u}^{\theta,\lambda} (X_s^{i,N}-X_u^{j,N}) |\dd u \dd s \\
    =:& I^{i,j}_{1,k,\eta,t}+I^{i,j}_{2,k,\eta,t}+I^{i,j}_{3,\eta,t}.
 \end{align*}
 First, for each $\eta>0$,
 $\lim_{k\to \infty}I^{i,j}_{2,k,\eta,t}=0$ a.s.,  by dominated convergence, because $x\mapsto H^{\theta,\lambda,\eta}_s(x)$ is continuous and uniformly bounded and since
 $(\tX_s^{i,N,\e_k},\tX_u^{j,N,\e_k}) \to (X_s^{i,N},X_u^{j,N})$ a.s.
 
\medskip 
We now check that
 \begin{equation}\label{toprove}
\lim_{\eta\to 0}  \limsup_{k\to \infty}   \E[I^{i,j}_{1,k,\eta,t}+I^{i,j}_{3,\eta,t}]=0,
\end{equation}
and this will complete the proof.
Recalling that $H_{s}^{\theta,\lambda,\e}=\frac{s^2}{(s+\e)^2}\nabla K_{s}^{\theta,\lambda,\e}$, one verifies that, if $k$ is large enough so that $\e_k \in (0,\eta]$, since $\frac{s^2}{(s+\eta)^2}\leq \frac{s^2}{(s+\e_k)^2}\leq 1$,
$$
I^{i,j}_{1,k,\eta,t}+I^{i,j}_{3,\eta,t}\leq \int_0^t \!\! \int_0^s \!\!
\Big(1-\frac{(s-u)^2}{(s-u+\eta)^2} \Big)
\Big(|\nabla K_{s-u}^{\theta,\lambda} (\tX_s^{i,N,\e_k}-\tX_u^{j,N,\e_k})|+ |\nabla K_{s-u}^{\theta,\lambda} (X_s^{i,N}-X_u^{j,N}) |\Big) \dd u \dd s. 
$$
Applying H\"older's inequality with $p=\frac{2\gamma}{2\gamma-3}$ and $p'=\frac{2\gamma}3$, we find
\begin{align*}
    I^{i,j}_{1,k,\eta,t}+I^{i,j}_{3,\eta,t}\leq& \Big(\int_0^t  \int_0^s \Big(1-\frac{(s-u)^2}{(s-u+\eta)^2} \Big)^{\frac{2\gamma} {2\gamma-3}}\dd u \dd s\Big)^{\frac{2\gamma-3}{2\gamma}}\\
    & \times \Big(\int_0^t  \int_0^s\Big(|\nabla K_{s-u}^{\theta,\lambda} (\tX_s^{i,N,\e_k}-\tX_u^{j,N,\e_k})|+ |\nabla K_{s-u}^{\theta,\lambda} (X_s^{i,N}-X_u^{j,N}) |\Big)^{\frac{2\gamma}3} \dd u \dd s\Big)^{\frac3{2\gamma}}.
\end{align*}
By \eqref{eq:Prop21-3prime} and \eqref{eq:Prop31-3prime} (and since $\frac3{2\gamma}<1$), we deduce that for some constant $C>0$,
$$
\limsup_{k\to \infty} \E[I^{i,j}_{1,k,\eta,t}+I^{i,j}_{3,\eta,t}]\leq C \Big(\int_0^t  \int_0^s \Big(1-\frac{(s-u)^2}{(s-u+\eta)^2} \Big)^{\frac{2\gamma} {2\gamma-3}}\dd u \dd s\Big)^{\frac{2\gamma-3}{2\gamma}},
$$
which tends to $0$ as $\eta\to 0$ by dominated convergence. This proves \eqref{toprove}.
\end{proof}

\section{Convergence}
\label{sec:conc}

We prove that the empirical measure of the particle system converges, up to extraction of a subsequence,
to a solution of the martingale problem. We recall that $\Pp(\R^2)$ and $\Pp(C([0,\infty),\R^2))$ 
are endowed with their weak convergence topologies.

\begin{theorem}\label{conv}
Consider some nonnegative $c_0 \in L^p(\R^2)$ for some $p>2$. Let $\gamma\in(\frac{3}{2}, \frac{2p+2}{p+2})$, $\alpha>0$, $\chi>0$ and $\theta>0$ satisfy the conditions of 
Proposition~\ref{prop:mc}. 
Consider,
for each $N\geq 2$, the particle system 
$(X^{i,N}_t)_{t\geq 0,i=1,\dots,N}$ built in Proposition~\ref{prop:ex}, as well as $\mu^N=N^{-1}\sum_1^N \delta_{(X^{i,N}_t)_{t\geq 0}}$, 
which a.s. belongs to $\Pp(C([0,\infty),\R^2))$. For each $t\geq 0$, we set
$\mu^N_t=N^{-1}\sum_1^N \delta_{X^{i,N}_t}$, which a.s. belongs to $\Pp(\R^2)$.
We assume that $\mu^N_0$ converges in probability, as $N\to \infty$, to some $\rho_0 \in \Pp(\R^2)$.

\medskip

The family $(\mu^N, N\geq 2)$ is tight in $\Pp(C([0,\infty),\R^d))$ and any (possibly random) limit point $\mu$ of  $(\mu^N)_{N\geq 2}$ a.s. solves \hyperref[defMP]{$\mathcal{(MP)}$} with initial law $\rho_0$.
Moreover, for $(\mu_t)_{t\geqq 0}$ its family of time marginals,
for all $t\geq 0$,
\begin{gather}
\label{eq:ConditionMPplus}
\E\Big[\int_0^t \int_{\R^2} \int_0^s \int_{\R^2}   (|K^{\theta,\lambda}_{s-u}(x-y)|^\gamma + |\nabla K^{\theta,\lambda}_{s-u}(x-y)|^{\frac{2\gamma}3}) \mu_u(\dd y)  \dd u   \mu_s(\dd x) \dd s \Big]< \infty,\\
\E\Big[\int_0^t \int_{\R^2} \int_{\R^2}  \frac{1}{|x-y|^{2(\gamma-1)}} \mu_s(\dd y)\mu_s(\dd x) \dd s \Big]< \infty.\label{eq:ConditionMPplus2}
\end{gather}
\end{theorem}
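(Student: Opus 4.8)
The strategy is the by-now classical route for passing from a particle system to a nonlinear martingale problem, in the spirit of \cite{Osada85long,FHM,fournier-jourdain}, combined with the uniform moment estimates of Proposition~\ref{prop:ex}. First I would establish tightness of $(\mu^N)_{N\geq 2}$ in $\Pp(C([0,\infty),\R^2))$: since the laws of $\mu^N$ live on probability measures over a Polish space, by a criterion of Sznitman it suffices to check that the laws of a single particle $(X^{1,N}_t)_{t\geq 0}$ form a tight family in $\Pp(C([0,\infty),\R^2))$, which is precisely Lemma~\ref{tight} (applied along the $\e\to 0$ limit, or directly re-run on the limit system, using exchangeability and the uniform bound \eqref{eq:Prop31-3}). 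Hence $(\mu^N)$ is tight and has limit points; fix one such (possibly random) limit point $\mu$, obtained as an a.s. limit along a subsequence after invoking the Skorokhod representation theorem, so that $\mu^{N_k}\to\mu$ a.s.\ in $\Pp(C([0,\infty),\R^2))$.

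Next I would pass the a~priori estimates to the limit. The functionals appearing in \eqref{eq:ConditionMPplus}–\eqref{eq:ConditionMPplus2} are, after truncation, lower semicontinuous in the path (the map $(w^1,w^2)\mapsto \int_0^t(\cdots)$ with the singular kernels is l.s.c.\ as a supremum of continuous truncations), so by Fatou together with the uniform-in-$N$ bounds \eqref{eq:Prop31-1}, \eqref{eq:Prop31-2}, \eqref{eq:Prop31-3prime}, written in terms of $\mu^N$ via exchangeability (e.g. $\E[\int_0^t\int\int |x-y|^{-2(\gamma-1)}\mu^N_s(\dd y)\mu^N_s(\dd x)\dd s]=\frac{N}{N-1}\E[\int_0^t|X^{1,N}_s-X^{2,N}_s|^{-2(\gamma-1)}\dd s]+\frac1{N-1}\cdot(\text{diagonal}=0)$, up to the trivial diagonal term which I would handle by noting the self-distance vanishes — more carefully, work with $\frac1{N(N-1)}\sum_{i\neq j}$), one deduces \eqref{eq:ConditionMPplus} and \eqref{eq:ConditionMPplus2} for $\mu$, and in particular \eqref{eq:ConditionMP} holds $\mu$-a.s. (using $\gamma>3/2$ so that $\frac{2\gamma}3>1$ and $2(\gamma-1)>1$, and that integrability of $|\nabla K|^{2\gamma/3}$ controls $\int|\nabla K|$ locally, and similarly $|K|^\gamma$ controls $\int|K|$).

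Then comes identification of the limit as a solution of $\mathcal{(MP)}$. For $\varphi\in C^2_c(\R^2)$, $0\leq s_1<\cdots<s_m\leq s<t$ and $g\in C_b((\R^2)^m)$, define the functional $F(\nu):=\int g(w_{s_1},\dots,w_{s_m})\big(M^\varphi_t-M^\varphi_s\big)\,\nu(\dd w)$ on $\Pp(C([0,\infty),\R^2))$, with $M^\varphi$ as in \eqref{def_mart} but with $\Q$ replaced by the time-marginals of the argument measure $\nu$. The martingale property of $M^\varphi$ under $\Q=\text{Law}$ of the limit is equivalent to $\E[F(\mu)]=0$. One shows $\E[F(\mu)]=\lim_k \E[F(\mu^{N_k})]$ and that $\E[F(\mu^{N})]\to 0$: the latter uses that $\mu^N$ is (almost) the empirical measure of an exchangeable system whose particles solve an SDE driven by i.i.d.\ Brownian motions, so $F(\mu^N)$ equals $\frac1N\sum_i g(\cdots)(\sqrt2(W^i_t-W^i_s)\cdot\nabla\varphi + \text{Itô correction})$ plus an error coming from replacing the \emph{true} drift $\frac1{N-1}\sum_{j\neq i}\int_0^u\nabla K_{u-r}(X^{i,N}_u-X^{j,N}_r)\dd r$ by $\int_0^u(\nabla K_{u-r}\ast\mu^N_r)(X^{i,N}_u)\dd r$, which differs only by the $j=i$ self-term divided by $N-1$, itself $O(1/N)$ in $L^1$ thanks to \eqref{PScond}/\eqref{eq:Prop31-3prime}; the main martingale term has second moment $O(1/N)$ by independence. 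The delicate point is the \emph{continuity of $F$ at $\mu$}: the drift term in $M^\varphi_t$ involves the singular kernel $\nabla K$ integrated against the marginals, so $\nu\mapsto\int\int_0^u(\nabla K_{u-r}\ast\nu_r)(w_u)\,\dd r\,\nu(\dd w)$ is not continuous for the weak topology in general.

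**This last point is the main obstacle.** To overcome it, I would truncate: replace $\nabla K$ by a bounded continuous kernel $\nabla K^{(\delta)}$ (e.g.\ cutting off the singularity at scale $\delta$, or replacing $\nabla K_\tau$ by $\nabla K_{\tau+\delta}$), for which the corresponding functional $F^{(\delta)}$ \emph{is} continuous and bounded on $\Pp(C([0,\infty),\R^2))$, hence $\E[F^{(\delta)}(\mu)]=\lim_k\E[F^{(\delta)}(\mu^{N_k})]$. Then I control $\E|F(\mu)-F^{(\delta)}(\mu)|$ and $\sup_k\E|F(\mu^{N_k})-F^{(\delta)}(\mu^{N_k})|$ uniformly, using H\"older in the same spirit as Step~3 of the proof of Proposition~\ref{prop:ex}: the difference involves $\int_0^t\int_0^s(1-\frac{(s-u)^2}{(s-u+\delta)^2})|\nabla K_{s-u}(\cdots)|\,\dd u\,\dd s$ (or an analogous cutoff bound), which after H\"older with exponents $\frac{2\gamma}{2\gamma-3}$ and $\frac{2\gamma}3$ is bounded by $C\,\big(\int_0^t\int_0^s(1-\frac{(s-u)^2}{(s-u+\delta)^2})^{\frac{2\gamma}{2\gamma-3}}\dd u\,\dd s\big)^{\frac{2\gamma-3}{2\gamma}}$ times the uniformly bounded quantity $\E[\int_0^t\int_0^s|\nabla K_{s-u}(\cdots)|^{2\gamma/3}\dd u\,\dd s]^{3/(2\gamma)}$ from \eqref{eq:Prop21-3prime}/\eqref{eq:Prop31-3prime}; the prefactor $\to 0$ as $\delta\to0$ by dominated convergence. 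Letting $\delta\to0$ gives $\E[F(\mu)]=0$ for all choices of $\varphi,s_i,g,s,t$, hence $\mu$ a.s.\ solves $\mathcal{(MP)}$ with initial law $\rho_0$ — the initial condition $\mu_0=\rho_0$ following from $\mu^N_0\to\rho_0$ in probability and $\mu^{N_k}\to\mu$. Throughout, one small technical nuisance is that $\mu$ may be random, so all identities must be read $\P$-a.s.\ and the passage to the limit in $\E[F(\mu^{N_k})]\to\E[F(\mu)]$ must use uniform integrability (guaranteed by the moment bounds and boundedness of the truncated functionals); and one uses that $b^{c_0,\theta,\lambda}$ contributes a continuous bounded term by \eqref{estib}, so it causes no trouble.
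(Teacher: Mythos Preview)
Your plan is essentially the paper's proof: tightness via Sznitman from Lemma~\ref{tight}, Fatou for \eqref{eq:ConditionMPplus}--\eqref{eq:ConditionMPplus2}, then identification of the limit through a functional $F$ tested against $g(w_{s_1},\dots,w_{s_m})$, with a truncation $\nabla K\to H^{\theta,\lambda,\eta}$ and the H\"older splitting with exponents $\tfrac{2\gamma}{2\gamma-3}$ and $\tfrac{2\gamma}{3}$ to remove the truncation uniformly. This is exactly Steps~2.1--2.4 in the paper.

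There is, however, one genuine soft spot in your argument. You write that the discrepancy between the particle drift $\frac{1}{N-1}\sum_{j\neq i}\int_0^u\nabla K_{u-r}(X^{i,N}_u-X^{j,N}_r)\,\dd r$ and the empirical convolution $\int_0^u(\nabla K_{u-r}\ast\mu^N_r)(X^{i,N}_u)\,\dd r$ is ``only the $j=i$ self-term divided by $N-1$, itself $O(1/N)$ in $L^1$ thanks to \eqref{PScond}/\eqref{eq:Prop31-3prime}.'' But both \eqref{PScond} and \eqref{eq:Prop31-3prime} are estimates for pairs $i\neq j$; there is no uniform-in-$N$ control of $\int_0^u|\nabla K_{u-r}(X^{i,N}_u-X^{i,N}_r)|\,\dd r$ available in the paper, and heuristically this quantity is borderline divergent (Brownian increments give $|X^{i,N}_u-X^{i,N}_r|^2\sim u-r$, and $\int_0^u(u-r)^{-3/2}\,\dd r=+\infty$). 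So the self-term is not innocuous here.

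The paper sidesteps this cleanly by never producing a self-term at all: it writes the martingale functional as $\Psi(\Q)=\Theta(\Q\otimes\Q)$ with $\Theta$ defined on $\Pp(C\times C)$, and then evaluates $\Theta$ not at $\mu^N\otimes\mu^N$ but at the \emph{off-diagonal} empirical measure
\[
\mu^N\odot\mu^N=\frac{1}{N(N-1)}\sum_{i\neq j}\delta_{((X^{i,N}_t)_{t\geq 0},(X^{j,N}_t)_{t\geq 0})}.
\]
With this choice, the It\^o formula matches the SDE drift exactly (no diagonal correction), and one still has $\mu^N\odot\mu^N\to\mu\otimes\mu$ in law since $\mu^N\otimes\mu^N$ and $\mu^N\odot\mu^N$ differ by $O(1/N)$ in total variation. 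You already use this device in the Fatou step; carrying it through the identification step as well closes the gap.
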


\begin{proof}
Again, we follow closely the proof of \cite[Theorem 6]{fournier-jourdain}. During this proof, we use the shortened notation $C=C([0,\infty),\R^2)$.

\medskip
{\it Step 1.}
For each $N\geq 2$, $(X^{i,N}_t)_{t\geq 0,i=1,\dots,N}$ has been built as a limit point
of $(X^{i,N,\e}_t)_{t\in [0,\infty),i=1,\dots,N}$ as $\e\to 0$. By 
Lemma~\ref{tight}-(ii), the family $((X^{1,N}_t)_{t\geq 0}, N\geq 2)$ is thus tight in $C$.
Since the system is exchangeable, this implies, see Sznitman \cite[Proposition~2.2]{Sznitman}, that
the family $(\mu^{N}, N\geq 2)$ is tight in $\Pp(C)$. We now consider a (non relabelled) subsequence of $\mu^N$ that converges in law to some $\mu$ as $N\to \infty$. We denote by $(\mu_t)_{t\geq 0}$ its family of time-marginals.
Since $\mu^N_0$ goes to $\rho_0$ by assumption,
we have $\mu_0=\rho_0$ a.s. 

\medskip

Moreover, since $\mu^N$ converges
in law to $\mu$, it also holds true that $\mu^N \otimes \mu^N$
and $\mu^N \odot \mu^N$ both converge in law to $\mu\otimes \mu$
in $\Pp(C\times C)$,
where we have set 
$$
\mu^N \odot \mu^N=
\frac 1 {N(N-1)}\sum_{i\neq j} \delta_{((X^{i,N}_t)_{t\geq 0},(X^{j,N}_t)_{t\geq 0})}. 
$$
Hence we deduce from the Fatou lemma  that for all $t\geq 0$,
\begin{align*}
\E\Big[\int_0^t \int_0^s \int_{\R^2} \int_{\R^2}  &\frac {\mu_u(\dd y)\mu_s(\dd x) \dd u \dd s}{(|{s-u}+(x-y)^2|)^\gamma}\Big]
= \E\Big[\int_C\int_C\int_0^t \int_0^s  \frac { \dd u \dd s}{(|{s-u}+(x_s-y_u)^2|)^\gamma}(\mu\otimes \mu)(\dd x,\dd y)\Big]\\
\leq & \liminf_N \E\Big[\int_C\int_C\int_0^t \int_0^s \frac { \dd u  \dd s}{(|{s-u}+(x_s-y_u)^2|)^\gamma}(\mu^N \odot \mu^N) (\dd x,\dd y) \Big]\\
=& \liminf_N \frac 1{N(N-1)}\sum_{i\neq j}\E\Big[\int_0^t \int_0^s \frac {\dd u  \dd s}{(|{s-u}+(X^{i,N}_s-X^{j,N}_u)^2|)^\gamma}\Big],
\end{align*}
which is finite by \eqref{eq:Prop31-2} and exchangeability.
Moreover, there is a constant $A=A(\theta)>0$ such that
$$
K^{\theta,\lambda}_s(x) \leq \frac 1 {4\pi s}e^{-\frac{\theta}{4s}|x|^2}\leq \frac{A}{s+|x|^2}
\quad \hbox{and} \quad |\nabla K^{\theta,\lambda}_s(x)| \leq
\frac{A}{(s+|x|^2)^{3/2}}.
$$
The first estimate easily follows from the fact $z\mapsto (1+z)e^{-z}$ is bounded on $\R_+$, and the second one has been shown in Remark \ref{ttt}.
We conclude that \eqref{eq:ConditionMPplus} holds true.
Observe that this implies \eqref{eq:ConditionMP} because $\gamma>3/2$.
Similarly, \eqref{eq:ConditionMPplus2} is deduced from \eqref{eq:Prop31-1} 
and the Fatou lemma.

\medskip

{\it Step 2.} It only remains to prove that a.s., for any $\varphi\in C^2_c(\R^2)$, the process $(M_t^\varphi)_{t\geq 0}$, defined in \eqref{def_mart}, is a $\mu$-martingale. To this end, it suffices to show that for all $t>s>0$, all continuous bounded function $\Phi:C\to \R$, we have $\Psi(\mu)=0$ a.s., where
for $\Q \in \Pp(C)$,
\begin{align*}
\Psi(\Q)=\int_{C}
\Phi((x_r)_{r\in [0,s]})
\Big(\varphi(x_t)-\varphi(x_s)-&
\int_s^t \Big[ \Delta \varphi(x_u)+ \chi \nabla \varphi(x_u)\cdot \nabla b^{c_0,\theta,\lambda}_s(x_u)\\
&+\chi \nabla \varphi(x_u)\cdot  \int_0^u (\nabla K^{\theta,\lambda}_{u-v}\ast \Q_v)(x_u) \dd v \Big]\dd u \Big) \Q(\dd x).
\end{align*}
We observe that for any $\Q\in \Pp(C)$, it holds that $\Psi(\Q)=\Theta(\Q\otimes \Q)$, where for $\Pi\in \Pp(C\times C)$,
\begin{align*}
\Theta(\Pi)=\int_{C\times C}\!\!\!\!\!
\Phi((x_r)_{r\in [0,s]})
\Big(\varphi(x_t)-\varphi(x_s)-&
\int_s^t \Big[ \Delta \varphi(x_u) + \chi \nabla \varphi(x_u)\cdot \nabla b^{c_0,\theta,\lambda}_s(x_u)\\
&+\chi \nabla \varphi(x_u)\cdot  \int_0^u \nabla K^{\theta,\lambda}_{u-v}(x_u-y_v) \dd v \Big]\dd u \Big) \Pi(\dd x,\dd y).
\end{align*}

{\it Step 2.1.} Here we show that for some constant $A$, for all $N\geq 2$,
\begin{equation}\label{scc1}
\E \Big[ [\Theta(\mu^N\odot \mu^N)]^2  \Big] \leq \frac {A} N.
\end{equation}
We have
\begin{align*}
\Theta(\mu^N\odot \mu^N)=&\frac1{N}\sum_{i=1}^N
\Phi((X^{i,N}_r)_{r\in [0,s]}) 
\Big(\varphi(X^{i,N}_t)-\varphi(X^{i,N}_s) -\int_s^t \Big[ \Delta \varphi(X^{i,N}_u)\\
& \hskip0.5cm
+\chi \nabla \varphi(X^{i,N}_u)\cdot \nabla b^{c_0,\theta,\lambda}_s(X^{i,N}_u)+ \frac{\chi}{N-1}\sum_{j\neq i}\int_0^u \nabla K^{\theta,\lambda}_{u-v}(X^{i,N}_u-X^{j,N}_v) \dd v \Big]\dd u \Big)\\
=&\frac 1N\sum_{i=1}^N
\Phi((X^{i,N}_r)_{r\in [0,s]})(O^{i,N}_t-O^{i,N}_s),
\end{align*}
where
\begin{align*}
O^{i,N}_t:=&\varphi(X^{i,N}_t) - \int_0^t \Delta \varphi(X^{i,N}_s)\dd s
-\chi \int_0^t \nabla\varphi(X^{i,N}_s) \cdot \nabla b^{c_0,\theta,\lambda}_s(X^{i,N}_s)\dd s\\
&\hskip1.5cm-\frac \chi {N-1} \sum_{j\neq i} \int_0^t \nabla\varphi(X^{i,N}_s) \cdot \Big(\int_0^u \nabla K^{\theta,\lambda}_{u-v}(X^{i,N}_u-X^{j,N}_v) \dd v \Big)\dd s \\
=& \varphi(X^{i,N}_0) + \sqrt 2 \int_0^t \nabla \varphi(X^{i,N}_s) \cdot \dd W^i_s
\end{align*}
by the It\^o formula (starting from \eqref{def:PS}).
Then \eqref{scc1} follows from some easy stochastic calculus arguments, because
$\Phi$ and $\nabla \varphi$ are bounded and since the Brownian motions $(W^1_t)_{t\geq 0},\dots,(W^N_t)_{t\geq 0}$ are independent.

\medskip

{\it Step 2.2.} Next we introduce, for $\eta\in (0,1]$, 
$\Theta_\eta$ defined as $\Theta$ with $\nabla K^{\theta,\lambda}_s(x)$ replaced by the smooth and bounded kernel $H^{\theta,\lambda,\eta}_s(x)=\frac{s^2}{(s+\eta)^2}\nabla K^{\theta,\lambda}_s(x)$, recall \eqref{Heps}.
Then one easily checks that the map $\Pi \mapsto \Theta_\eta(\Pi)$
is continuous and bounded from $\Pp(C\times C)$ to $\R$. 
This uses in particular \eqref{estib} and that $x\mapsto \nabla b^{c_0,\theta,\lambda}_s(x)$ is continuous for all $s>0$.
Since
$\mu^N$ goes in law to $\mu$ and thus, as already seen, $\mu^N\odot\mu^N$ goes in law to $\mu \otimes \mu$, we deduce that for any $\eta\in(0,1]$,
$$
\E[|\Theta_\eta(\mu\otimes \mu)|]=\lim_{N} \E[|\Theta_\eta(\mu^N\odot\mu^N)|].
$$

{\it Step 2.3.} We now prove that $\lim_{\eta\to 0} \Delta_\eta=0$, where
$$
\Delta_\eta:= \E[|\Theta(\mu\otimes \mu)-\Theta_\eta(\mu\otimes \mu)|] + \sup_{N\geq 2}\E[|\Theta(\mu^N\odot\mu^N)-\Theta_\eta(\mu^N\odot\mu^N)|].
$$
We proceed as in the proof of \eqref{toprove}.
Since $\Phi$ and $\nabla \varphi$ are bounded, we see that for some constant $A$, for any $\Pi \in \Pp(\R^2\times\R^2)$,
\begin{align*}
|\Theta(\Pi)-\Theta_\eta(\Pi)|\leq&  A \int_{C\times C}
\int_0^t \int_0^u |\nabla K^{\theta,\lambda}_{u-v}(x_u-y_v)-H^{\theta,\lambda,\eta}_{u-v}(x_u-y_v)| \dd v \dd u \Pi(\dd x,\dd y)\\
=&  A \int_{C\times C}
\int_0^t \int_0^u \Big(1-\frac{(s-u)^2}{(s-u+\eta)^2}\Big)|\nabla K^{\theta,\lambda}_{u-v}(x_u-y_v)| \dd v \dd u \Pi(\dd x,\dd y)\\
\leq& A \e_\eta \int_{C\times C} \Big(\int_0^t  \int_0^s |\nabla K_{s-u}^{\theta,\lambda}(x_u-y_v) |^{\frac{2\gamma}3} \dd u \dd s\Big)^{\frac3{2\gamma}} \Pi(\dd x,\dd y)
\end{align*}
by the H\"older inequality, with 
$\e_\eta=(\int_0^t  \int_0^s (1-\frac{(s-u)^2}{(s-u+\eta)^2})^{\frac{2\gamma} {2\gamma-3}}\dd u \dd s)^{\frac{2\gamma-3}{2\gamma}}$. Hence
\begin{align*}
\Delta_\eta\leq & A\e_\eta\E\Big[ \int_{C\times C} \Big(\int_0^t  \int_0^s |\nabla K_{s-u}^{\theta,\lambda}(x_u-y_v) |^{\frac{2\gamma}3} \dd u \dd s\Big)^{\frac3{2\gamma}} (\mu\otimes\mu+\mu^N\odot\mu^N)(\dd x,\dd y)\Big]\\
=& A\e_\eta \E\Big[\Big(\int_0^t  \int_{\R^2}\int_0^s\int_{\R^2} |\nabla K_{s-u}^{\theta,\lambda}(x-y) |^{\frac{2\gamma}3} \mu_u(\dd y)\dd u \mu_s(\dd x)\dd s\Big)^{\frac3{2\gamma}}\Big]\\
&+A\e_\eta\E\Big[\Big(\int_0^t \int_0^s\Big( \frac{1}{N(N-1)}\sum_{i\neq j} |\nabla K_{s-u}^{\theta,\lambda}(X^{i,N}_s-X^{j,N}_u)|^{\frac{2\gamma}3} \dd u \dd s
\Big)^{\frac3{2\gamma}}\Big],
\end{align*}
Since $\lim_{\eta\to0}\e_\eta=0$ by dominated convergence,
the conclusion follows from \eqref{eq:ConditionMPplus}, \eqref{eq:Prop31-3prime} and exchangeability (recall that $\frac 3{2\gamma}<1$).

\medskip
{\it Step 2.4.} Recalling that $\Psi(\mu)=\Theta(\mu\otimes\mu)$, we may write, 
for any $\eta \in (0,1]$,
\begin{align*}
\E[|\Psi(\mu)|]
\leq & \E[|\Theta(\mu\otimes\mu)-\Theta_\eta(\mu\otimes \mu)|]+ \limsup_N| \E[|\Theta_\eta(\mu\otimes \mu)|]- \E[|\Theta_\eta(\mu^N\odot \mu^N)|]|\\
&+ \limsup_N \E[|\Theta_\eta (\mu^N\odot\mu^N)-\Theta(\mu^N\odot\mu^N)|] + \limsup_N \E[|\Theta(\mu^N\odot \mu^N)|].
\end{align*}
The last term is equal to $0$ by Step 2.1, as well as the second one by Step 2.2. 
Hence
$$
\E[|\Psi(\mu)|] \leq \E[|\Theta(\mu\otimes\mu)-\Theta_\eta(\mu\otimes \mu)|]+ 
\limsup_N \E[|\Theta_\eta (\mu^N\odot\mu^N)-\Theta(\mu^N\odot\mu^N)|] .
$$
Step 2.3 thus implies that $\E[|\Psi(\mu)|]=0$, which was our goal.
\end{proof}

\section{Conclusion and discussion about the constants}
\label{sec:condition}
Recall that for $\alpha,\beta,\theta>0$ and $\gamma\in (3/2,2)$,
\begin{gather*}
C_0(\beta):=\sup_{u\geq 0} \sqrt u (1+\beta u) ^{3/2} e^{-u},
\qquad    C_1(\alpha,\gamma):=(\gamma-1)(1-4\alpha(\gamma-1)),\\
C_2(\theta,\alpha,\gamma):= \frac{\sqrt{\alpha\theta}(\gamma-1)}{2\pi}C_0\Big(\frac{4\alpha}{\theta}\Big) \kappa\Big(\frac{1}{2}, \gamma-1\Big)\kappa\Big(\gamma-\frac32,\gamma-1\Big).
\end{gather*}
Fix $\rho_0 \in \Pp(\R^2)$, $c_0 \in L^p(\R^2)$ for some $p>2$, $\chi>0$ and $\theta>0$. By Proposition~\ref{prop:ex} and
Theorem~\ref{conv}, the conclusions
of Theorem~\ref{th:mainTH} hold true provided there are 
$\gamma\in(\frac{3}{2}, \frac{2p+2}{p+2})$ and $\alpha>0$ such that 
$$ 
C_1(\alpha,\gamma)>0,\quad
C_1(\alpha,\gamma)>\chi C_2(\theta,\alpha,\gamma),
\quad  (4-2\gamma)-\chi \frac{\sqrt\theta C_0\big(\frac{4\alpha}\theta\big) \kappa\big(\frac{1}{2}, \gamma-1\big)}{4\pi\sqrt\alpha [C_1(\alpha,\gamma) -\chi C_2(\theta,\alpha,\gamma)]^{\frac1{2(\gamma-1)}}}>0.
$$
The first condition implies that $\alpha \in (0,\frac1{4(\gamma-1)})$ and the two other ones can be summarized as
$$
\chi  C_2(\theta,\alpha,\gamma) +[ \chi C_3(\theta,\alpha,\gamma)]^{2(\gamma-1)} < C_1(\alpha,\gamma),
$$
where we have set
$$
C_3(\theta,\alpha,\gamma):=\frac{\sqrt\theta C_0\big(\frac{4\alpha}\theta\big) \kappa\big(\frac{1}{2}, \gamma-1\big)}{4\pi\sqrt\alpha(4-2\gamma)}.
$$
Hence if we set, for $\theta>0$, $\gamma \in (\frac32,2)$ and $\alpha \in (0,\frac{1}{4(\gamma-1)})$, 
$$
\chi_{\theta,\alpha,\gamma}= \sup\Big\{\chi>0 :\chi C_2(\theta,\alpha,\gamma) + [\chi C_3(\theta,\alpha,\gamma)]^{2(\gamma-1)} < C_1(\alpha,\gamma) \Big\}, 
$$
Theorem~\ref{th:mainTH} holds true with 
\begin{equation}\label{Eq:chistar}
\chi_{\theta,p}^*=\sup\Big\{ \chi_{\theta,\alpha,\gamma} : \gamma \in \Big(\frac32,\frac{2p+2}{p+2}\Big), \alpha \in \Big(0,\frac1{4(\gamma-1)}\Big) \Big\}.
\end{equation}

We now discuss the numerical values of this threshold, obtained by numerical trials. We do not really take care of $p$: We try to find the values of $\gamma\in (\frac32,2)$ and $\alpha \in (0,\frac1{4(\gamma-1)})$ maximizing $\chi_{\theta,\alpha,\gamma}$ 
and then see to which values of $p>2$ this applies.

\begin{remark}\label{vn}
(i) For any $p>2$, $\liminf_{\theta \to 0} \chi_{\theta,p}^* \geq 3.28$.
\medskip

(ii) For any $p>2.6$, $\chi_{0.1,p}^*\geq 2.42$.
\medskip

(iii) For any $p>3.3$, $\chi_{1,p}^*\geq 1.39$.
\medskip

(iv) For any $p>3.5$, $\chi_{10,p}^*\geq 0.51$.
\medskip

(v) For any $p> 3.5$, $\liminf_{\theta\to \infty} \sqrt \theta\chi_{\theta,p}^* \geq 1.65$.
\end{remark}

\begin{proof}
We start with (i). Fix $p>2$. We choose $\gamma=\frac32+\sqrt{\theta}$, which belongs to $(\frac 32,\frac{2p+2}{p+2})$ for all $\theta>0$ small enough, and $\alpha=\sigma\,\theta$ (for some $\sigma>0$ to be chosen later)
which belongs to $(0,\frac1{4(\gamma-1)})$ for all $\theta>0$ small enough.
It holds that $\lim_{\theta\to 0} C_1(\alpha,\gamma)=\frac12$. Moreover,
$$
C_2(\theta,\alpha,\gamma)=\frac{\sqrt{\sigma}\,\theta (\frac12+\sqrt\theta)C_0(4\sigma)\kappa(\frac12,\frac12+\sqrt\theta),\kappa(\sqrt\theta,\frac12+\sqrt\theta)}{2\pi}\to 0
$$
as $\theta\to 0$, because $\kappa(\frac12,\frac12+\sqrt\theta)\to 1$ and $\kappa(\sqrt\theta,\frac12+\sqrt\theta)\sim \frac 1 {\sqrt \theta}$. Finally,
$$
C_3(\theta,\alpha,\gamma)=\frac{ C_0(4\sigma)\kappa(\frac12,\frac12+\sqrt\theta)}{\sqrt{\sigma} 4\pi(1-2\sqrt\theta)}
\to \frac{ C_0(4\sigma)}{4\pi\sqrt\sigma}.
$$
All in all, with these values
of $\alpha$ and $\gamma$, the condition $\chi  C_2(\theta,\alpha,\gamma) +[ \chi C_3(\theta,\alpha,\gamma)]^{2(\gamma-1)} < C_1(\alpha,\gamma)$
asymptotically writes   
$\frac{C_0(4\sigma)}{4\pi\sqrt\sigma}\chi<0.5$ for $\theta>0$ small (since
$2(\gamma-1)\to 1$). The choice $\sigma=0.13$ seems to be a good one and we find numerically
$C_0(4\sigma)\simeq 0.6895$, whence the condition $\chi\leq 3.2856$.

\medskip

For (ii), choose $\gamma=1.56$ and $\alpha=0.009$. The result follows from a numerical computation. For any $p>2.6$, it holds that
$\gamma \in (\frac32,\frac{2p+2}{p+2})$.

\medskip

For (iii), choose $\gamma=1.62$ and $\alpha=0.045$. For any $p>3.3$, it holds that
$\gamma \in (\frac32,\frac{2p+2}{p+2})$.

\medskip

For (iv), choose  $\gamma=1.63$ and $\alpha=0.067$. For any $p>3.5$, it holds that
$\gamma \in (\frac32,\frac{2p+2}{p+2})$.

\medskip

For (v), we choose $\gamma=1.63$ and $\alpha=0.08$.  For any $p>3.5$, it holds that
$\gamma \in (\frac32,\frac{2p+2}{p+2})$. We have $C_1(\alpha,\gamma)\simeq 0.502$ and 
$\kappa(\frac12,\gamma-1)\simeq 1.411$ and $\kappa(\gamma-\frac32,\gamma-\frac12)<7.751$. Hence
\begin{gather*}
\chi C_2(\theta,\alpha,\gamma)=\chi \sqrt{\theta}\frac{\sqrt\alpha (\gamma-1) C_0\big(\frac{4\alpha}\theta\big)\kappa\big(\frac12,\gamma-1\big)\kappa\big(\gamma-\frac32,\gamma-1\big)}{2\pi}\simeq 0.286 \,C_0\Big(\frac{4\alpha}\theta\Big)\chi \sqrt{\theta} ,\\
\chi C_3(\theta,\alpha,\gamma)=\chi \sqrt{\theta}\frac{ C_0\big(\frac{4\alpha}\theta\big) \kappa\big(\frac{1}{2}, \gamma-1\big)}{4\pi\sqrt\alpha(4-2\gamma)}\simeq 0.537\, C_0\Big(\frac{4\alpha}\theta\Big)\chi \sqrt{\theta}.
\end{gather*}
Since $\lim_{\theta\to \infty}C_0(\frac{4\alpha}{\theta})=C_0(0)\simeq 0.429$,
the condition $\chi  C_2(\theta,\alpha,\gamma) +[ \chi C_3(\theta,\alpha,\gamma)]^{2(\gamma-1)} < C_1(\alpha,\gamma)$
asymptotically rewrites $0.123\, \chi \sqrt{\theta}
+[0.231 \, \chi \sqrt{\theta}]^{1.26}<0.502$. This holds true if 
$\chi \sqrt{\theta}<1.65$.
\end{proof}

\appendix
\section{Proof of Lemma~\ref{lemma:FI}}
\label{sec:app}

We fix $b>a>0$, introduce $\Gg=\{g:\R_+\to\R_+ : 0 \leq g(s)\leq 1/s$ a.e.$\}$
and, for $g\in\Gg$, $I_a(g)=\int_0^\infty g^{1+a}$ and
$I_b(g)=\int_0^\infty g^{1+b}$. We set $\Gg_b=\{g \in \Gg : 0<I_b(g)<\infty\}$ and
$$
\bar \kappa(a,b)=\sup \{[I_b(g)]^{-\frac a b}I_a(g): g \in \Gg_b \}.
$$
We will show that $\bar \kappa(a,b)=\kappa(a,b)$ and this will complete the proof, because for $f:[0,t]\to \R_+$, the function
$g(s)=\frac{1}{s+f(s)}{\bf 1}_{\{s\in [0,t]\}}$ belongs to $\Gg$.

\medskip

{\it Step 1.} Here we show that $\bar \kappa(a,b)<\infty$ and that
there exists $g \in \Gg_b$ realizing the supremum.
\medskip

First note that for any $g \in \Gg$, by the H\"older inequality,
$$
I_a(g)\le\int_0^1 g^{1+a} + \int_1^\infty \frac{\dd s}{s^{1+a}} \leq [I_b(g)]^{\frac{1+a}{1+b}} + \frac 1 a.
$$

Next we observe that $\bar \kappa(a,b)=\sup \{I_a(g): g\in \Gg_{b,1}\}$, where $\Gg_{b,1}=\{g \in \Gg : I_b(g)=1\}$. This easily follows from the fact that for any $g \in \Gg_b$ and any $\lambda>0$, the function
$g_\lambda(s)= \lambda g(\lambda s)$ still belongs to $\Gg_b$, and 
$I_a(g_\lambda)=\lambda^{a}I_a(g)$ and $I_b(g_\lambda)=\lambda^{b}I_b(g)$.

\medskip

The two above points show that $\bar\kappa(a,b)\leq 1+\frac1a<\infty$. Now we consider
a sequence $(g_n)_{n \geq 1}$ of $\Gg_{b,1}$ such that 
$\lim_n I_a(g_n)=\bar \kappa(a,b)$ and we set $h_n=g_n^{1+a}$. Since $I_b(g_n)=1$, the family $(h_n)_{n\geq 1}$ takes values in the unit ball of $L^{p}(\R_+)$, where $p=\frac{1+b}{1+a}>1$, so that we can find a (non relabeled) subsequence of $(h_n)_{n\geq 1}$ converging weakly in $L^p(\R_+)$ to some function $h$. One easily verifies that $g :=h^{\frac1{1+a}}\in \Gg$, and
it classically holds true that 
$$
I_b(g)=||h||_p^p\leq
\liminf_n ||h_n||_p^p
=\liminf_n I_b(g_n)=1.
$$
We now show that $I_a(g)=\bar\kappa(a,b)$. The weak convergence of
$h_n$ to $h$ implies that for every $\e\in(0,1)$, one has $\int_\e^{1/\e} g_n^{1+a}=\int_\e^{1/\e} h_n \to \int_\e^{1/\e} h=\int_\e^{1/\e}g^{1+a}$. To conclude that $I_a(g)=\lim_nI_a(g_n)=\bar\kappa(a,b)$,
it suffices to note that, by the H\"older inequality and since $g_n \in \Gg_{b,1}$,
$$
\lim_{\e\to 0}\sup_{n\geq 1} \Big(\int_0^\e g_n^{1+a}+\int_{1/\e}^\infty g_n^{1+a}\Big)
\leq \lim_{\e\to 0}\sup_{n\geq 1} \Big([I_b(g_n)]^{\frac{1+a}{1+b}}\e^{\frac{b-a}{1+b}}
+ \int_{1/\e} \frac{\dd s}{s^{1+a}}\Big)=0.
$$
All in all, $g \in \Gg$ and $[I_b(g)]^{-\frac a b}I_a(g)\geq \bar \kappa(a,b)$, 
whence necessarily $[I_b(g)]^{-\frac a b}I_a(g)=\bar \kappa(a,b)$.

\medskip

{\it Step 2.} By Step 1, there is $g \in \Gg_b$ realizing the supremum.
Here we show that there is a constant $k>0$, namely $k=[\frac{b(1+a)I_b(g)}{a(1+b)I_a(g)}]^{\frac{1}{b-a}}$, 
such that for a.e. $s>0$, we have $g(s)=\max\{k,s^{-1}\}$.

\medskip

{\it Step 2.1.} We first show that $g>0$ a.e. It suffices to show that for all $\e\in (0,1)$, $\lambda(A_\e)=0$, where $A_\e=\{s \in [0,\frac1\e] : g(s)=0\}$
and where $\lambda$ is the Lebesgue measure. For any $\alpha\in (0,\e)$, the function $g_\alpha=g+\alpha {\bf 1}_{A_\e}$ belongs to $\Gg_b$.
Hence
$$
\frac{I_a(g)}{[I_b(g)]^{\frac ab}} \geq \frac{I_a(g_\alpha)}{[I_b(g_\alpha)]^{\frac ab}}
=\frac{I_a(g)+\lambda(A_\e)\alpha^{1+a}}{[I_b(g)+\lambda(A_\e)\alpha^{1+b}]^{\frac ab}}=\frac{I_a(g)}{[I_b(g)]^{\frac ab}} \Big(1+ \lambda(A_\e)\Big[\frac{\alpha^{1+a}}{I_a(g)}-\frac ab \frac{\alpha^{1+b}}{I_b(g)}\Big]+O(\alpha^{2+a+b})\Big)
$$
as $\alpha\to 0$. This implies that $\lambda(A_\e)=0$, because
$\alpha^{1+a}\gg\alpha^{1+b}$ as $\alpha\to 0$.

\medskip

{\it Step 2.2.} Now we show that $g\leq k$ a.e. 
By Step 2.1, it suffices to prove that for all $\e\in (0,1)$, we have 
$g\leq k$ a.e. on $B_\e=\{s \in [0,\frac1\e] : g(s)\geq \e\}$.
For all $\delta:\R_+\to\R_+$ such that $\delta\leq {\bf 1}_{B_\e}$, all $\alpha \in (0,\e)$, it holds that $g_{\delta,\alpha}=g-\alpha \delta$ belongs to $\Gg_b$. The function
$\frac \delta g$ is bounded and compactly supported, which allows one to justify the following computation: Since $g_{\delta,\alpha}^{1+a}=g^{1+a}(1-\alpha\frac\delta g)^{1+a}
=g^{1+a}-\alpha (1+a)g^a \delta +O(\alpha^2)$ and $g_{\delta,\alpha}^{1+b}=g^{1+b}-\alpha (1+b) g^b \delta +O(\alpha^2)$ as $\alpha\to 0$,
$$
\frac{I_a(g)}{[I_b(g)]^{\frac ab}}\geq 
 \frac{I_a(g_{\delta,\alpha})}{[I_b(g_{\delta,\alpha})]^{\frac ab}}
=\frac{I_a(g)-(1+a)\alpha \int_0^\infty g^a\delta + O(\alpha^2)}{[I_b(g)-(1+b)\alpha \int_0^\infty g^b\delta + O(\alpha^2)]^{\frac ab}}.
$$
Hence 
$$\frac{I_a(g)}{[I_b(g)]^{\frac ab}}\geq 
\frac{I_a(g)}{[I_b(g)]^{\frac ab}}\Big(
1  - \frac{(1+a)\alpha}{I_a(g)} \int_0^\infty g^a \delta +\frac{a(1+b)\alpha} 
{b I_b(g)} \int_0^\infty g^b \delta + O(\alpha^2)\Big).
$$
This implies that
$$
\int_0^\infty g^b \delta \leq \frac{b(1+a)I_b(g)}{a(1+b)I_a(g)}
\int_0^\infty g^a \delta.
$$
Since this holds true for any measurable $\delta:\R_+\to\R_+$ such that
$\delta\leq {\bf 1}_{B_\e}$, we conclude that $g \leq [\frac{b(1+a)I_b(g)}{a(1+b)I_a(g)}]^{\frac1{b-a}}$
a.e. on $B_\e$, which was our goal.

\medskip

{\it Step 2.3.} We next show that $g\geq k$ a.e. on $C=\{s >0 : g(s)<\frac 1 s\}$. By Step 2.1, it suffices to show that for all $\e\in (0,1)$,
$g\geq k$ a.e. on $C_\e=\{s\in [0,\frac 1 \e] : \e \leq g(s) \leq \frac {1-\e} s\}$. For all $\delta:\R_+\to\R_+$ such that $\delta\leq {\bf 1}_{C_\e}$, all $\alpha \in (0,\e^2)$, it holds that $g_{\delta,\alpha}=g+\alpha \delta$ belongs to $\Gg_b$. The function
$\frac \delta g$ is again bounded and compactly supported
and, proceeding exactly as in Step 2.2, we find that 
$$\frac{I_a(g)}{[I_b(g)]^{\frac ab}}\geq 
\frac{I_a(g)}{[I_b(g)]^{\frac ab}}\Big(
1  + \frac{(1+a)\alpha}{I_a(g)} \int_0^\infty g^a \delta -\frac{a(1+b)\alpha} 
{b I_b(g)} \int_0^\infty g^b \delta + O(\alpha^2)\Big)
$$
as $\alpha \to 0$. This implies that
$$
\int_0^\infty g^b \delta \geq \frac{b(1+a)I_b(g)}{a(1+b)I_a(g)}
\int_0^\infty g^a \delta.
$$
Since this holds true for any measurable $\delta:\R_+\to\R_+$ such that
$\delta\leq {\bf 1}_{C_\e}$, we conclude that $g \geq [\frac{b(1+a)I_b(g)}{a(1+b)I_a(g)}]^{\frac1{b-a}}$
a.e. on $C_\e$.

\medskip

{\it Step 2.4.} We now conclude that $g(s)=\min\{k,s^{-1}\}$ for a.e. $s> 0$. We know from Steps 2.2 and 2.3 that $g\leq k$ a.e. and that
$g \geq k$ a.e. on $C=\{s>0 : g(s)<\frac 1 s\}$. We thus have $g=k$ a.e. on $C$.
Let $D=\R_+\setminus C=\{s>0 : g(s)=\frac 1 s\}$ and $r={\rm ess} \,\inf D$. We claim that $r=\frac 1 k$.

\medskip

We know that for a.e. $\e\in (0,r)$, $r-\e \in C$, so that $\frac 1{r-\e}\geq g(r-\e)=k$. Thus $r \leq \frac 1 k$.
Now consider $r_n \in D$ such that $\lim_n r_n=r$. We have 
$k\geq g(r_n)=\frac 1 {r_n}$, so that $r \geq \frac 1 k$.

\medskip

We have shown that $g=k$ a.e. on $[0,\frac 1 k]\subset C$, and it remains to verify that for a.e. $s>\frac 1 k$, we have $g(s)=\frac 1 s$.
This follows from the fact that if $g(s)<\frac 1 s$ for some $s>\frac1k$, then $s \in C$, so that $g(s)=k$, which is not possible since $k>\frac 1 s$.

\medskip

{\it Step 3.} For any $k>0$, the function $g(s)=\max\{k,s^{-1}\}$
satisfies $I_a(g)=k^{a}+ \frac 1 a k^a$ and $I_b(g)=k^{b}+\frac 1 b k^b$. We deduce from Step 2 that
$$
\bar \kappa(a,b)=\frac{k^{a}+\frac 1 a k^a}{(k^{b}+\frac 1 b k^b)^{a/b}}
=\frac{1+\frac 1a}{(1+\frac 1b)^{a/b}},
$$
which is nothing but $\kappa(a,b)$.
\hfill $\square$

\paragraph{Acknowledgements}
We warmly thank Vincent Calvez and Beno\^{\i}t Perthame for crucial discussions regarding our key functional inequality and its proof. We also thank the referee for
their comments that allowed us to improve the clarity of this paper.

\small

\bibliography{biblio}

\end{document}